\newtheorem{thm}{Theorem}
\newtheorem{cor}[thm]{Corollary}
\newtheorem{lem}[thm]{Lemma}
\theoremstyle{definition}
\newtheorem{defn}{Definition}
\newtheorem*{conj*}{Conjecture}
\newcommand{\mymod}{{\ \textrm{mod} \ }}
\newcommand{\Path}[1]{\ensuremath{{\sf P}_{#1}}}
\newcommand{\C}[1]{\ensuremath{{\sf C}_{#1}}}
\newcommand{\K}[1]{\ensuremath{{\sf K}_{#1}}}
\newcommand{\textoverline}[1]{$\overline{\mbox{#1}}$}
\title{Extremal chemical graphs of maximum degree at most 3 for 33 degree-based topological indices}
\author{
	S\'ebastien Bonte\textsuperscript{1},
	Gauvain Devillez\textsuperscript{1},
	Valentin Dusollier\textsuperscript{1},\\
	Alain
	Hertz\textsuperscript{2},
	Hadrien M\'elot\textsuperscript{1}	
	\\[3mm]
	\footnotesize \textsuperscript{1} Computer Science Department - Algorithms Lab\\
\footnotesize University of Mons, Mons, Belgium\\[3mm]	
\footnotesize \textsuperscript{2} Department of Mathematics and Industrial
	Engineering\\
	\footnotesize Polytechnique Montr\'eal - Gerad, Montr\'eal, Canada\\
	\footnotesize Corresponding author. Email: alain.hertz@gerad.ca
}
\date{}
\begin{document}

\maketitle

\vspace*{0.2cm}

\hrule
\vspace*{0.2cm}
\small
\noindent
\textbf{Abstract.} \emph{We consider chemical graphs that are defined as connected graphs of maximum degree at most 3. We characterize the extremal graphs, meaning those that maximize or minimize 33 degree-based topological indices. This study shows that five graph families are sufficient to characterize the extremal graphs of 29 of these 33 indices. In other words, the extremal properties of this set of degree-based topological indices vary very little.}

\vspace*{0.2cm}
\noindent
\emph{Keywords:} chemical graphs, Randi\'c index, degree-based topological index, extremal graphs.

\vspace*{0.2cm}
\hrule

\normalsize

\section{Introduction} \label{sec_intro}

Chemical graphs provide a powerful tool for modeling molecular structures. These graphs, where vertices represent atoms and edges represent bonds, allow researchers to investigate various chemical and physical properties of molecules through graph-theoretical concepts. 

According to Patrick Fowler \cite{PF}: ``The definition of chemical graphs that is useful depends on context. Two definitions appropriate to different kinds of carbon framework can be found in the literature. The graphs that can be regarded as skeletons of saturated hydrocarbons (such as alkanes), are connected and have maximum degree $\Delta$~at most~4. If instead the interest is in (unsaturated) conjugated $\pi$ systems, such as alkenes, polyenes, benzenoids, and fullerenes, the maximum degree should be at most~3, since a conjugated carbon atom participates in at most three single bonds.''

In this paper, we focus on the second definition of chemical graphs (where the maximum degree is at most~3) and explore the bounds on topological indices of such graphs. A topological index, or molecular descriptor, is a graph invariant used to study specific physicochemical properties of molecules. Among the most well-known indices is the Randi\'c index, introduced by Milan Randi\'c~\cite{Randic1975} in 1975, which has been widely used in quantitative structure-activity relationship (QSAR) and quantitative structure-property relationship (QSPR) studies. Its value $Ra(G)$ for a chemical graph $G$ is defined as 
$$
Ra(G) = \sum_{vw \in E} \frac{1}{\sqrt{d(v)  d(w)}},
$$
where $d(v)$ is the degree of vertex $v$. This is an example of so-called \emph{degree-based} topological index, that is an index computed from the sum of the weights of the edges, each edge $vw$ having a weight defined by a formula using the degrees of $v$ and $w$.

As stated by Ivan Gutman \cite{G13}, ``Countless topological indices have been and are being proposed
so far, in many cases without any examination if these
correlate with any of the various physical properties,
chemical reactivity or biological activity. To use a mild expression, today we have far too
many such descriptors, and there seems to lack a firm
criterion to stop or slow down their proliferation.''

In this paper, we consider 33 degree-based topological indices that we found in the literature (see Section~\ref{sec_preliminaries})
and whose extremal properties have given rise to scientific publications \cite{das2011abc,zhang2016abc,AFRG22,henning2007albertson,hansen2005albertson,cui2021ag,carballosa2022ag,VPVFS21,che2016forgotten,yan2010ga,deng2018ga,zhong2012harmonic, DENG2013,elumalai2018, das2016,sedlar2015, falahati2017,ali2023maximum,cruz2021sombor,li2022extremal,cruz2021extremal,liu2021reduced,deng2021molecular,ali2019extremal,gutman2004first,nikolic2003zagreb,furtula2010augmented, ali2021augmented,li2008survey, swartz2022survey, hansen2009variable,GFE14,liu2020some, chu2020extremal,raza2020bounds,ali2020sddi,ghorbani2021sddi}. In the same spirit as Gutman's  words, we can  wonder whether these indices are very different from each other. 
We provide a partial answer by analyzing the extremal properties of these  indices.
We use the word ``partial'' for several reasons. First, we are only interested in the \emph{extremal} properties of topological indices and it could therefore be that various indices are distinguished by other properties of interest to chemists. Second, we only deal with chemical graphs of maximum degree at most 3. Finally, the list of topological indices studied in this article is not exhaustive, although we have tried to consider the most cited and studied in the scientific literature.
Our conclusions will be clear: five families of chemical graphs are sufficient to characterize the vast majority of extremal chemical graphs of degree-based topological indices.

Let $G = (V, E)$ be a graph of {\em order} $n = |V|$ and {\em size} $m = |E|$. The maximum degree of a graph $G$ is denoted $\Delta(G)$. An edge with endpoints $u$ and $v$ of degree $d(u)=i$ and $d(v)=j$ is called an $(i,j)$-edge and is denoted $uv$. 
 We denote $x_{ij}$ the number of $(i,j)$-edges in $G$ while $n_i$ is the number of vertices in $G$ of degree $i$. In what follows,  $\K{n}$, $\Path{n}$ and $\C{n}$ denote the complete graph of order $n$, the path of order $n$ and the cycle of order $n$, respectively.

In the next section, we give a precise definition of the chemical graphs considered in this paper and we give the list of 33 topological indices whose extremal properties are analyzed. Section 3 is dedicated to defining five families of chemical graphs which are sufficient to characterize the extremal graphs for a large majority of degree-based topological indices. Tools used in our proofs are given in Section 4, and a characterization of extremal chemical graphs for the 33 topological indices is given in Section 5.

\section{Preliminaries}\label{sec_preliminaries}

As mentioned in the previous section, we are interested in connected graphs of maximum degree at most 3. To avoid border effects, we will not consider small or dense graphs which have only few possible $x_{ij}$ values. This is now explained in detail. 

There are only 10 connected graphs of order $n$ with $1\leq n \le 4$, Six of them, namely \K{1},\K{2},\K{3}, \K{4}, \Path{3} and the diamond (\K{4} minus an edge), are the only ones having their order and size. They therefore maximize and minimize any topological index of their order and size. The two pairs $(n,m)$ with $n\leq 4$ that have different chemical graphs of order $n$ and size $m$ are \Path{4} and the star with 3 branches for $(n,m)=(4,3)$ and \C{4} and a triangle plus a pending vertex for $(n,m)=(4,4)$.
By restricting ourselves to connected graphs of maximum degree at most 3, it is not difficult to show that there are 10 such graphs of order $n=5$ and 29 ones of order $n=6$. These can be obtained
using \emph{PHOEG}~\cite{phoeg}, \emph{House of Graphs}~\cite{hog} or \emph{Nauty's geng}~\cite{geng}.
Hence, given any topological index, it is easy to determine which chemical graph of order $n\leq 6$ has maximum or minimum value. From now on, we will therefore only consider connected graphs $G$ of order at least 7, which implies $x_{11}=0$ and $2\leq \Delta(G)\leq 3$.

\begin{defn}
A \emph{degree-based topological index} is any function $f$ of the form
\vspace{-0.2cm}$$f(x_{12},x_{13},x_{22},x_{23},x_{33}) = c_{12}x_{12}+c_{13}x_{13}+c_{22}x_{22}+c_{23}x_{23}+c_{33}x_{33},$$

\vspace{-0.3cm}where every $c_{ij}$ is a real number.
\end{defn} 

By abuse of notation, for a graph $G$, we will write $f(G)$ instead of $f(x_{12},x_{13},x_{22},x_{23},x_{33})$, where $x_{ij}$ is the number of $(i,j)$-edges in $G$.
For example, the Randi\'c index (see Section~\ref{sec_intro}) is the degree-based topological index with $c_{ij} = \frac{1}{\sqrt{i j}}$. 

Let's focus now on dense graphs. Since we restrict ourselves  to graphs $G$ of maximum degree at most 3, the size $m$ of such graphs is at most $\frac{3n}{2}$: if $m=\frac{3n}{2}$ , then $x_{33}=m$ ($G$ is 3-regular); if $m=\frac{3n-1}{2}$, then $x_{23}=2$ and $x_{33}=m-2$; if $m=\frac{3n-2}{2}$, then there are three possible cases:
\begin{itemize}[nosep]
	\item $x_{13}=1$ and $x_{33}=m-1$;
	\item $x_{23}=4$ and $x_{33}=m-4$;
	\item $x_{22}=1$, $x_{23}=2$ and $x_{33}=m-3$.
\end{itemize}
Hence, given a pair $(n,m)$ with $m\geq \frac{3n-2}{2}$, and given any degree-based topological index $f$, it is not difficult to determine the $x_{ij}$ values of the connected graphs of order $n$, size $m$ and maximum degree at most 3 which maximize or minimize $f$.

From now on, when we talk about chemical graphs, we assume that we are not in the above extreme cases (i.e., very small or very dense graphs). More precisely, here is the definition of the chemical graphs studied in this paper.

\begin{defn}
A \emph{chemical graph} is a connected graph of order $n\geq 7$,  size $m\leq \frac{3n-3}{2}$ and maximum degree at most 3. 
\end{defn}

It is important to specify here that although the results that we demonstrate are valid for chemical graphs as defined above, it is possible that these results are also valid for some connected graphs of maximum degree at most 3 and of order $n<7$ or size $m>\frac{3n-3}{2}.$

\begin{table} 
	\begin{center}
		\caption{33 Degree-based topological indices} \label{tab_inv}
		\begin{tabular}{| l | l | l |}
			\hline
			Name & Short name & $c_{ij}$ \\
			\hline
			Atom-bond connectivity index & ABC & $\sqrt{\frac{i+j-2}{ij}}$ \\
			Atom-bond sum-connectivity index & ABSC & $\sqrt{\frac{i+j-2}{i+j}}$ \\
			Albertson index & Albertson & $|i-j|$ \\
			Arithmetic-geometric index & AG & $\frac{i+j}{2\sqrt{ij}}$ \\
			Difference between AG and GA & AG-GA & $\frac{i+j}{2\sqrt{ij}} - \frac{2\sqrt{ij}}{i+j}$ \\
			Extended index & Extended & $\frac{1}{2}(\frac{i}{j} + \frac{j}{i})$ \\
			Forgotten index & Forgotten & $i^2 + j^2$ \\
			Geometric-arithmetic index & GA & $\frac{2\sqrt{ij}}{i+j}$ \\
			First Gourava index & Gourava1 & $i+j+ij$ \\
			Second Gourava index & Gourava2 & $(i+j)ij$ \\
			First hyper-Gourava index & hGourava1 & $(i+j+ij)^2$ \\
			Second hyper-Gourava index & hGourava2 & $((i+j)ij)^2$ \\
			Gourava sum-connectivity index & GouravaSC & $\frac{1}{\sqrt{i+j+ij}}$ \\
			Gourava product-connectivity index & GouravaPC & $\sqrt{ij(i+j)}$\\
			Harmonic index & Harmonic & $\frac{2}{i+j}$ \\
			Inverse degree index & InvDeg & $i^{-2} + j^{-2}$ \\
			Inverse sum of degree index & InvSumDeg & $\frac{ij}{i+j}$ \\
			Randi\'c index & Randi\'c& $\frac{1}{\sqrt{ij}}$ \\
			Reciprocal Randi\'c index & rRandi\'c & $\sqrt{ij}$ \\
			Sigma index & Sigma & $(i-j)^2$ \\
			Sombor index & Sombor & $\sqrt{i^2 + j^2}$ \\
			Reduced Sombor index & rSombor & $\sqrt{(i-1)^2 + (j-1)^2}$ \\
			Sum connectivity index & SumConn & $\frac{1}{\sqrt{i+j}}$ \\
			Reciprocal sum connectivity index & rSumConn & $\sqrt{i+j}$ \\
			First Zagreb index & Zagreb1 & $i+j$ \\
			Second Zagreb index & Zagreb2 & $ij$ \\
			Augmented Zagreb index & aZagreb & $(\frac{ij}{i+j-2})^3$\\
			First hyper-Zagreb index & hZagreb1 & $(i+j)^2$ \\
			Second hyper-Zagreb index & hZagreb2 & $(ij)^2$ \\
			Nat. log. of the mult. sum Zagreb index & lnZagreb1 & $\ln(i+j)$\\
			Nat. log. of the first mult. Zagreb index & lnZagreb2 & $2(\frac{\ln(i)}{i}+\frac{\ln(j)}{j})$\\
			Nat. log. of the second mult. Zagreb index & lnZagreb3 & $\ln(i)+\ln(j)$\\
			Modified first Zagreb index & mZagreb & $i^{-3} + j^{-3}$ \\
			\hline
		\end{tabular}
	\end{center}
\end{table}

We found in the literature 33 degree-based topological indices. They are described in  Table~\ref{tab_inv}. 
Most of them, namely 28, appear in \cite{G21}, the exceptions being ABSC which appears in \cite{AFRG22}, AG-GA, which appears in \cite{VPVFS21} and lnZagreb1, lnZagreb2 and lnZagreb3 which can be found in \cite{raza2020bounds}. 
We are interested in the extremal properties of these indices. More precisely, given a topological index $f$, we aim to characterize the  chemical graphs that maximize $f$ and those that minimize $f$. For the 33 indices of Table \ref{tab_inv}, this gives potentially 66 families of chemical graphs. As will be shown, 5 families (instead of 58) are sufficient to characterize the extremal chemical graphs of 29 of the 33 topological indices.

\begin{defn}
	Given a degree-based topological index $f$ defined by $c_{ij}$ values, its complement denoted  \textoverline{$f$} is the degree-based topological index  defined by $-c_{ij}$ values.
\end{defn}	

Determining chemical graphs with the \emph{minimum} value for $f$ is thus equivalent to determining chemical graphs with the \emph{maximum} value for \textoverline{$f$}. In the subsequent proofs, we always aim to maximize the value of a topological index in Table \ref{tab_inv} or its complement.

\begin{defn}
	A chemical graph $G$ is \emph{extremal} for a degree-based topological index $f$ if it maximizes $f$ or \textoverline{$f$} over all chemical graphs of the same order and size as $G$.
\end{defn}

\section{Five families of chemical graphs}\label{sec_4families}

A chemical graph is characterized by five $x_{ij}$ values, namely, $x_{12}, x_{13}, x_{22}, x_{23}$ and $x_{33}$. We therefore have:
\begin{align}
 \label{eq_n1}  n_1 & = x_{12} + x_{13}\\[-3pt]
 \label{eq_n2}  n_2 & = \frac{x_{12} + 2 x_{22} + x_{23} }{2}\\[-3pt] 
 \label{eq_n3}  n_3 & = \frac{x_{13} + x_{23} + 2 x_{33} }{3}\\[-3pt]
\label{n}n&=n_1+n_2+n_3=\frac{3}{2} x_{12} + \frac{4}{3} x_{13} + x_{22} + \frac{5}{6} x_{23} + \frac{2}{3} x_{33}\\[-3pt]
\label{m}m&=x_{12}+x_{13}+x_{22}+x_{23}+x_{33}.
\end{align}

We now define five families of chemical graphs. As will be shown, these are sufficient to characterize the extremal chemical graphs of 29 topological indices.

\begin{defn}\label{def_F1}
	$F_1$ is the set of chemical graphs with  the following numbers $x_{ij}$ of $(i,j)$-edges:
			\begin{center}
			\setlength{\extrarowheight}{2pt}			\begin{tabular}{| c | c | c | c |c | l}\cline{1-5} 
				$x_{12}$ & $x_{13}$ & $x_{22}$& $x_{23}$& $x_{33}$&\\ \cline{1-5}
				0 & $\frac{3n - 2m }{2}$ & 0 & 0 &$\frac{4m - 3n}{2}$& if $n$ if even\\ \cline{1-5}
				0 & $\frac{3n - 2m-1 }{2}$ & 0 & 2 &$\frac{4m - 3n-3}{2}$& if $n$ if odd\\ \cline{1-5}
			\end{tabular}
		\end{center}
\end{defn}
\begin{defn}\label{def_F2}
	$F_2$ is the set of chemical graphs with the following numbers $x_{ij}$ of $(i,j)$-edges:
	\begin{center}
	\begin{tabular}{| c | c | c | c |c | l}\cline{1-5} 
		$x_{12}$ & $x_{13}$ & $x_{22}$& $x_{23}$& $x_{33}$&\\ \cline{1-5}
		2 & 0 & $m-2$ & 0 &0& if $m=n-1$\\ \cline{1-5}
		0 & 0 & $m$ & 0 &0& if $m=n$\\ \cline{1-5}
		0 & 0 & $m-5$ & 4 &1& if $m=n+1$\\ \cline{1-5}
		0 & 0 & $3n-2m-1$ & 2 &$3m-3n-1$& if $n+1<m\leq\frac{3n-3}{2}$\\ \cline{1-5}
	\end{tabular}
\end{center}
\end{defn}
\begin{defn}\label{def_F3}
	$F_3$ is the set of chemical graphs with the following numbers $x_{ij}$ of $(i,j)$-edges:
	\begin{center}
	\setlength{\extrarowheight}{2pt}		\begin{tabular}{| c | c | c | c |c | l}\cline{1-5} 
		$x_{12}$ & $x_{13}$ & $x_{22}$& $x_{23}$& $x_{33}$&\\ \cline{1-5}
		0 & $\frac{3n - 2m }{2}$ & 0 & 0 &$\frac{4m - 3n}{2}$& if $n$ if even\\ \cline{1-5}
		1 & $\frac{3n - 2m-3 }{2}$ & 0 & 1 &$\frac{4m - 3n-1}{2}$& if $n$ if odd\\ \cline{1-5}
	\end{tabular}
\end{center}
\end{defn}
\begin{defn}\label{def_F4}
	$F_4$ is the set of chemical graphs with the following numbers $x_{ij}$ of $(i,j)$-edges:
	\begin{center}
		\begin{tabular}{| c | c | c | c |c | l}\cline{1-5} 
			$x_{12}$ & $x_{13}$ & $x_{22}$& $x_{23}$& $x_{33}$&\\ \cline{1-5}
			2 & 0 & $m-2$ & 0 &0& if $m=n-1$ \\ \cline{1-5}
			0 &0 &$6n-5m$ & $6m-6n$ &0& if $n\leq m< \frac{6n}{5}$\\ \cline{1-5}
			0 &0 & 0 & $6n-4m$ &$5m-6n$& if $\frac{6n}{5}\leq m\leq \frac{3n-3}{2}$\\ \cline{1-5}			
		\end{tabular}
	\end{center}
\end{defn}
\begin{defn}
$F_5$ is the set of chemical graphs with the following numbers $x_{ij}$ of $(i,j)$-edges:
		\begin{center}
			\begin{tabular}{| c | c | c | c |c | l}\cline{1-5} 
				$x_{12}$ & $x_{13}$ & $x_{22}$& $x_{23}$& $x_{33}$&\\ \cline{1-5}
				2 & 0 & $m-2$ & 0 &0& if $m=n-1$ \\ \cline{1-5}
				0 & 0 & $m$ & 0 &0& if $m=n$ \\ \cline{1-5}
				0 & 0 & $m-6$ & 6 &0& \multirow{2}{*}{if $m=n+1$}\\
				0 & 0 & $m-5$ & 4 &1& \\ \cline{1-5}
				0 &0 &$a$ & $6n-4m-2a$ &$5m-6n+a$& if $n+1< m\leq \frac{3n-3}{2}$\\ \cline{1-5}
			\end{tabular}
		\end{center}
		
\vspace{-6pt}		\noindent where $a$ is any integer such that $\max\{0,6n{-}5m\}{\leq} a{\leq} 3n{-}2m{-}1$
		when $n{+}1{<} m\leq \frac{3n-3}{2}$.				
	\end{defn}
	
It is not difficult to show that for every $x_{ij}$ values of the five families defined above, there is at least one chemical graph having $x_{ij}$ $(i,j)$-edges. This can be proved in several ways. The first approach is to use the necessary and sufficient conditions  provided in Hansen~\emph{et al.}~\cite{Hansen2017} for the existence of a simple connected graph with given $x_{ij}$ values. These conditions for chemical graphs can be written as follows:
\begin{align}
   \label{cond1}  x_{33} & \leq \tfrac{n_3(n_3-1)}{2} &&\mbox{ if } n_3 = 1, 2 \mbox{ or } 3,\\[-2pt]
   \label{cond2}  x_{22} & \leq \tfrac{n_2(n_2-1)}{2}  &&\mbox{ if }  n_2 = 1 \mbox{ or }  2,\\[-2pt]
   \label{cond3}  x_{23} & \leq n_2n_3  &&\mbox{ if }  n_2 = 1 \mbox{ or }  2 \mbox{ and }  n_3 = 1,\\[-2pt]
   \label{cond4}  x_{23} & \geq \delta(n_2) + \delta(n_3) - 1,\\[-2pt]
   \label{cond5}  x_{23} + x_{33} & \geq n_3 + \delta(n_2) - 1,\\[-2pt]
   \label{cond6}  x_{22} + x_{23} & \geq n_2 + \delta(n_3) - 1,\\[-2pt]
   \label{cond7}  x_{22} + x_{23} + x_{33} & \geq n_2 + n_3 - 1.
\end{align}
where 
$$
\delta(x) = \left\{ 
\begin{array}{ll}
1 & \mbox{if } x \ge 1,\\
0 & \mbox{otherwise.}
\end{array}
\right.
$$

Condition~\eqref{cond7} is equivalent to $m - x_{12} - x_{13} \geq n - x_{12} - x_{13} - 1$, which is equivalent to $m \geq n - 1$. 
In summary, given a pair $(n,m)$ of integers such that $n\geq 7$ and  $n-1\leq m\leq \frac{3n-3}{2}$, and given $x_{ij}$ values that satisfy conditions~\eqref{eq_n1}-\eqref{m}, we can state that there is a chemical graph of order $n$ and size $m$ with $x_{ij}$ $(i,j)$-edges if and only if conditions~\eqref{cond1}-\eqref{cond6} are satisfied. This is now illustrated with family $F_1$. 

Given $x_{ij}$ values as in Definition \ref{def_F1}, Equations~\eqref{eq_n1}, \eqref{eq_n2} and \eqref{eq_n3} give
$$\left\{ 
\begin{array}{ll}
n_1 & = \frac{3n - 2m - (n\mymod 2)}{2},\\[0pt]
n_2 & = n \mymod 2,\\[0pt]
n_3 & = \frac{2m - n - (n\mymod 2)}{2}.
\end{array}
\right.$$
Clearly, $n=n_1+n_2+n_3$ and $m=x_{12}+x_{13}+x_{22}+x_{23}+x_{33}$, which means that conditions \eqref{eq_n1}-\eqref{m} are satisfied. Let's now prove that Constraints~\eqref{cond1}-\eqref{cond6} are also satisfied. 
Note first that $n\geq 7$ implies $m\geq n-1\geq 6$. Since $3n_3=m+x_{33}$, we have $n_3\geq 2$. 
\begin{itemize}[nosep]
	\item 	If $n_3=2$ then $6=m+x_{33}\geq 6+x_{33}$ implies $x_{33} <1=\frac{n_3(n_3-1)}{2}$; if $n_3=3$, then $9=m+x_{33}\geq 6+x_{33}$ implies $x_{33} \leq 3=\frac{n_3(n_3-1)}{2}$. Hence, Constraint~\eqref{cond1} is satisfied. 
	\item Since $x_{22}=0\leq \frac{n_2(n_2-1)}{2}$ for $n_2=1$ and $2$,  Constraint~\eqref{cond2} is satisfied.
	\item As mentioned above, $n_3\geq 2$ which implies that there is no Constraint~\eqref{cond3}.
	\item 	If $n$ is even, then
	$x_{23}=n_2=\delta(n_2)=0$. Therefore,
	\begin{itemize}[nosep]
		\item $x_{23}=0\geq \delta(n_2)+\delta(n_3)-1$;
		\item Since $m\geq n-1$, we have $2x_{33}=4m-3n\geq m-3$. Hence, $m-3+x_{33}\leq 3x_{33}$ which implies $x_{23}+x_{33}=x_{33}\geq \frac{m+x_{33}-3}{3}=n_3-1=n_3+\delta(n_2)-1$;
	\item $x_{22}+x_{23}\geq n_2+\delta(n_3)-1$.
\end{itemize}
Hence, Constraints~\eqref{cond4}-\eqref{cond6} are satisfied.
	\item If $n$ is odd, then,  $x_{23}=2$ and $n_2=\delta(n_2)=1$. Therefore,
	\begin{itemize}[nosep]
		\item $x_{23}=2 > \delta(n_2)+\delta(n_3)-1$.
	\item Since $m\geq n-1$, we have $2x_{33}=4m-3n-3\geq m-6$. Hence, $m-6+x_{33}\leq 3x_{33}$ which implies $x_{23}+x_{33}=2+x_{33}\geq \frac{m+x_{33}}{3}=n_3=n_3+\delta(n_2)-1$. 
	\item $x_{22}+x_{23}=2> n_2+\delta(n_3)-1$.
\end{itemize} Hence, Constraints~\eqref{cond4}-\eqref{cond6} are satisfied.	
\end{itemize}

\vspace{6pt}Another way of proving that a chemical graph exists for given  $x_{ij}$ values is to give an explicit construction for such a graph. For family $F_1$, for an even order $n\geq 7$ and for $m\geq n$, one can for example consider the following construction (a similar one can be given for odd values of $n$ and for $m=n-1$):
\begin{itemize}[nosep]
	\item[1.] Construct a cycle on vertices $v_1, v_2, \ldots, v_{m-\frac{n}{2}}$, with edges $v_i v_{i+1}$ ($1\leq i \leq m - \frac{n}{2} - 1$) and $v_1v_{m - \frac{n}{2}}$.
	\item[2.] Add a matching with the $m-n$ edges  $v_i v_{\lceil \frac{2m-n}{4} \rceil+i}$ ($1<i\leq m-n$). Let $W$ be the set of endpoints of these edges.
	\item[3.] For each $v_i \notin W$, add a pending vertex $w_i$ adjacent to $v_i$. 
\end{itemize}
The resulting graph belongs to $F_1$. Indeed, every $v_i$ has degree 3 and every $w_i$ has degree 1. We thus have $x_{12} = x_{22} = x_{23} = 0$. Moreover, 
$x_{13} = m - \frac{n}{2} - |W| = \frac{3n-2m}{2}$ and 
$x_{33} = m - \frac{n}{2} + |W| =  \frac{4m-3n}{2}.$

In summary, it is tedious but easy to check that given $x_{ij}$ values of one of the five graph families defined above, there is at least one chemical graph with $x_{ij}$ $(i,j)$-edges. Therefore, from now on, we assume that 
this is true for the five families $F_1,\ldots,F_5$.

\section{Tools used to characterize extremal chemical graphs} 

\vspace{-0.3cm}Given a set of $x_{ij}$ values, we consider transformations which generate $x'_{ij}$ values having specific properties.

\begin{defn}
	Let $A=(a_{12},a_{13},a_{22},a_{23},a_{33})$ be a vector with integer coefficients. 
	
	\vspace{-11pt}\begin{itemize}\setlength{\itemsep}{-3pt}	
		\item Given any integer $k$, the $(A,k)$-transform of a vector $(x_{12},x_{13},x_{22},x_{23},x_{33})$ is the vector $(x'_{12},x'_{13},x'_{22},x'_{23},x'_{33})$ such that $x'_{ij}=x_{ij}+ka_{ij}$.
		\item 
	We say that $A$ is $(n,m)$-preserving if it satisfies the two following equations:
\vspace{-0.2cm}\begin{align}
\label{eq_n_aij}    \frac{3}{2} a_{12} + \frac{4}{3} a_{13} + a_{22} + \frac{5}{6} a_{23} + \frac{2}{3} a_{33} & = 0;\\[-3pt]
\label{eq_m_aij}   a_{12}+a_{13}+a_{22}+a_{23}+a_{33} & = 0.
\end{align}
\end{itemize}
\end{defn}

\vspace{-0.4cm}The idea behind these definitions is that if $(x'_{12},x'_{13},x'_{22},x'_{23},x'_{33})$ is the $(A,k)$-transform of $(x_{12},x_{13},x_{22},x_{23},x_{33})$ and if $A$ is $(n,m)$-preserving, then 
\vspace{-0.2cm}$$\frac{3}{2} x_{12} + \frac{4}{3} x_{13} + x_{22} + \frac{5}{6} x_{23} + \frac{2}{3} x_{33}=\frac{3}{2} x'_{12} + \frac{4}{3} x'_{13} + x'_{22} + \frac{5}{6} x'_{23} + \frac{2}{3} x'_{33} \mbox{ \quad\quad and}\vspace{-0.3cm}$$
$$x_{12}+x_{13}+x_{22}+x_{23}+x_{33}=x'_{12}+x'_{13}+x'_{22}+x'_{23}+x'_{33}. 
$$

Hence, the values of $n$ and $m$ derived from Equations~\eqref{n} and \eqref{m} are the same, whether calculated using $x_{ij}$ or $x'_{ij}$ values.

Let $G$ be a chemical graph of order $n$ and size $m$ that maximizes the value of a topological index $f$ over all chemical graphs of same order and same size as $G$. We now study the impact on the $x_{ij}$ values of $G$ if some of the four following values are strictly positive: 
    \begin{align*}
        V_1&=c_{13}-c_{22}+\min_{i=1,2,3}{({c_{3i}}-c_{2i})}+\min_{j=2,3}{({c_{3j}}-c_{2j})}, \\[-3pt]
        V_2&=c_{13}-c_{12}+\min_{i=2,3}{({c_{2i}}-c_{3i})}, \\[-3pt]
        V_3&=c_{22}-c_{13}+\min_{i=1,2,3}{({c_{2i}}-c_{3i})}+\min_{j=2,3}{({c_{2j}}-c_{3j})}, \\[-3pt]
       V_4&={2c}_{22}-c_{12}-c_{23}+2\min_{i=1,2,3}{({c_{2i}}-c_{3i})}.
    \end{align*}
\begin{lem}\label{lem:V1}
	 Let $f$ be a degree-based topological index  such that $V_1>0$. A chemical graph $G$ that maximizes $f$ over all chemical graphs of the same order and size as $G$ has no (2,2)-edge.
	\end{lem}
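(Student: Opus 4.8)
The plan is to argue by contradiction: assume $G$ maximizes $f$ among chemical graphs of order $n$ and size $m$, yet contains a $(2,2)$-edge $uv$. I will exhibit an $(n,m)$-preserving transformation of $G$ that strictly increases $f$, using the hypothesis $V_1>0$, which contradicts maximality.

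Let $a$ be the neighbor of $u$ other than $v$, and $b$ the neighbor of $v$ other than $u$ (both $u,v$ have degree $2$). The basic operation is a single edge rotation: delete $vb$ and insert $ub$. After this move $u$ has degree $3$, $v$ has degree $1$, while every other vertex — in particular $a$ and $b$ — keeps its degree, so $(n,m)$ is unchanged, the maximum degree stays at most $3$, and connectivity is preserved (deleting $vb$ splits off at most the side of $b$, which $ub$ immediately reattaches through $u$). The three edges whose type changes are $uv$ (from $(2,2)$ to $(1,3)$), $ua$ (from $(2,d(a))$ to $(3,d(a))$) and the rerouted edge (from $(2,d(b))$ to $(3,d(b))$), so
\[
f(G') - f(G) = (c_{13}-c_{22}) + \bigl(c_{3,d(a)}-c_{2,d(a)}\bigr) + \bigl(c_{3,d(b)}-c_{2,d(b)}\bigr).
\]
Since $d(a)\in\{1,2,3\}$ and $d(b)\in\{2,3\}$, the last two terms are bounded below by $\min_{i=1,2,3}(c_{3i}-c_{2i})$ and $\min_{j=2,3}(c_{3j}-c_{2j})$, whence $f(G')-f(G)\ge V_1>0$. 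This move is exactly the $(A,1)$-transform whose vector $A$ records the three type changes, and one checks directly that such an $A$ satisfies \eqref{eq_n_aij}--\eqref{eq_m_aij}, i.e. is $(n,m)$-preserving.

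Two points make the operation legitimate and force a case distinction. First, the rerouted edge must land on a vertex of degree at least $2$ for the bound by $\min_{j=2,3}$ to apply; this is why I keep the freedom to interchange the roles of $u$ and $v$ (rotating $ua$ onto $v$ instead). If both $a$ and $b$ had degree $1$, then $\{a,u,v,b\}$ would be a connected component isomorphic to $\Path{4}$, impossible for a connected chemical graph with $n\ge 7$; hence at least one orientation has its moved endpoint of degree $\ge 2$. Second, inserting $ub$ (resp. $va$) requires that this edge be new, i.e. that $u$ and $v$ do not already share a common neighbor.

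The main obstacle is therefore the case where $u$ and $v$ have a common neighbor $a$, i.e. $uv$ lies in a triangle. Here $d(a)=3$ (otherwise $\{u,v,a\}$ would be an isolated triangle, again excluded by $n\ge 7$), so the only edge one could reroute already exists and every clean rotation is blocked. I expect to resolve this either by a modified transformation that reroutes through the third neighbor of $a$ (using that $n\ge 7$ forces structure beyond the triangle), or by arguing at the level of $x_{ij}$-vectors: apply the vector $A$ corresponding to $d(a)=d(b)=3$ and invoke the realizability criterion \eqref{cond1}--\eqref{cond6} to produce a chemical graph $G'$ of the same order and size with the transformed counts, whose gain $\sum_{ij}c_{ij}a_{ij}\ge V_1>0$ again contradicts maximality. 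Verifying that the transformed vector still satisfies \eqref{cond1}--\eqref{cond6} in this degenerate configuration is the delicate part of the argument.
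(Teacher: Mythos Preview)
Your treatment of the case where $u$ and $v$ have no common neighbor is essentially the paper's argument (with the roles of $u$ and $v$ reversible, as you note). The formula for $f(G')-f(G)$ is correct, and the observation that at least one of the two outside neighbors has degree $\ge 2$ because $n\ge 7$ is exactly what is needed to bound the increment by $V_1$. One minor point: the expression is symmetric in $d(a)$ and $d(b)$, so swapping the roles of $u$ and $v$ does not actually change which term falls under which minimum; all that matters is that \emph{some} outside neighbor has degree $\ge 2$, and you have that.

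The genuine gap is the triangle case. You correctly identify it and you correctly guess the shape of a fix (work through the third neighbor of the common vertex), but you do not carry it out, and this case does require a concrete construction rather than a hope. The paper resolves it as follows. Let $w$ be the common neighbor (so $d(w)=3$) and let $x$ be its third neighbor; since $n\ge 7$ one has $d(x)\ge 2$. If $d(x)=2$ with second neighbor $y$, replace the edge $uv$ by $vx$: then $u$ drops to degree $1$, $x$ rises to degree $3$, and the increment is $c_{13}-c_{22}+(c_{33}-c_{23})+(c_{3,d(y)}-c_{2,d(y)})\ge V_1$. If $d(x)=3$ with other neighbors $y,z$, replace $xy,xz$ by $uy,vz$: now $x$ drops to degree $1$, $u,v$ rise to degree $3$, and the increment is $c_{13}-c_{22}+2(c_{33}-c_{23})\ge V_1$. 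In both subcases the new edges are genuinely new and connectivity is preserved, so the maximality of $G$ is contradicted.

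Your alternative idea of passing to the $x_{ij}$-vector level and invoking realizability could in principle be made to work (the relevant $(n,m)$-preserving vector is $(0,1,-1,-2,2)$, and $x_{23}\ge 2$ holds because $uw,vw$ are $(2,3)$-edges), but verifying conditions \eqref{cond1}--\eqref{cond6} for the shifted vector is a genuine case analysis that you have not done; the paper's explicit edge replacements avoid that entirely.
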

	\begin{proof}
		Assume $G$ contains an edge $uv$ with both endpoints of degree 2. 
		\begin{itemize}[nosep]
			\item If $u$ and $v$ have no common neighbor, then let $x$ be the second neighbor of $u$ and let $y$ be the second neighbor of $v$. At least one of $x,y$, say $y$, has degree at least two, else $G$ has order 4. We can then obtain a chemical graph $G'$ by replacing $ux$ by $vx$. Let $i\in\{1,2,3\}$ be the degree of $x$ and $j\in\{2,3\}$ the degree of $y$. The graph $G'$ contradicts the maximality of $G$ since 
			\vspace{-8pt}$$f(G')=f(G)+c_{13}-c_{22}+(c_{3i}-c_{2i})+(c_{3j}-c_{2j})\geq f(G)+V_1>f(G).$$
			\item If $u$ and $v$ have a common neighbor $w$, then $w$ has degree 3, else $G$ has order $n=3$. Also, the third neighbor $x$ of $w$ has degree at least two, else $G$ has order $n=4$.
			\begin{itemize}[nosep]
				\item If $x$ has degree 2, then let $y$ be its second neighbor and let $i\in\{1,2,3\}$ be the degree of $y$. We can obtain a chemical graph $G'$ by replacing $uv$ by $vx$. Then $G'$ contradicts the maximality of $G$ since \begin{center}$f(G')=f(G)+c_{13}-c_{22}+            (c_{3i}-c_{2i})+(c_{33}-c_{23})\geq f(G)+V_1>f(G).$\end{center}
			\item If $x$ has degree 3, then let $y$ and $z$ be the two other neighbors of $x$. We can obtain a chemical graph $G'$ by replacing $xy$ and $xz$ by $uy$ and $vz$.   Then $G'$ contradicts the maximality of $G$ since
\begin{center}\hspace{1cm}$f(G')=f(G)+c_{13}-c_{22}+ 2(c_{33}-c_{23})  \geq f(G)+V_1>f(G).\hfill\qedhere$\end{center}
			\end{itemize}
		\end{itemize}
	\end{proof}

\begin{lem}\label{lem:V2}
	Let $f$ be a degree-based topological index  such that $V_1>0$ and $V_2>0$. A chemical graph $G$ that maximizes $f$ over all chemical graphs of the same order and size as $G$ has no (1,2)-edge and no (2,2)-edge.
\end{lem}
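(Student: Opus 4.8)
The plan is to reuse the local-swap strategy of Lemma~\ref{lem:V1} while exploiting the structural information that $V_1>0$ already provides. Since $V_1>0$, Lemma~\ref{lem:V1} immediately tells us that a maximizer $G$ has no $(2,2)$-edge, so half of the statement is free; the remaining work is to rule out $(1,2)$-edges using the hypothesis $V_2>0$. I would argue by contradiction: assume $G$ contains a $(1,2)$-edge $uv$ with $d(u)=1$ and $d(v)=2$, and let $w$ be the second neighbour of $v$.

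The first key step is to pin down $d(w)$. The vertex $w$ cannot have degree $1$, since otherwise $\{u,v,w\}$ would be a \Path{3} component, contradicting connectivity together with $n\ge 7$; and $w$ cannot have degree $2$, since then $vw$ would be a $(2,2)$-edge, which Lemma~\ref{lem:V1} forbids. Hence $d(w)=3$, and I write $y,z$ for the two neighbours of $w$ distinct from $v$. Moreover $y$ and $z$ cannot both have degree $1$, for otherwise $\{u,v,w,y,z\}$ would form a closed component of order $5$, again impossible for a connected graph with $n\ge 7$. So I may fix the labelling so that $z$ has degree $i\in\{2,3\}$, keeping $z$ as the neighbour that stays attached to $w$.

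The transformation is then to delete the edge $wy$ and add the edge $vy$, producing $G'$. I would first verify that $G'$ is again a chemical graph of the same order and size: the only degree changes are $d(v)\colon 2\to 3$ and $d(w)\colon 3\to 2$, so $\Delta\le 3$ and $m$ are preserved (equivalently, the underlying vector $A$ is $(n,m)$-preserving); the edge $vy$ is not already present because $y\notin\{u,w\}=N(v)$; and $G'$ stays connected since any path using $wy$ can be rerouted through $w\,v\,y$. Accounting edge by edge, exactly four edges can change weight: $uv$ turns from $(1,2)$ into $(1,3)$, the edge $wz$ turns from $(3,i)$ into $(2,i)$, while $vw$ remains a $(2,3)$-edge and the relocated edge remains a $(3,d(y))$-edge, the last two contributing $0$ regardless of $d(y)$. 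This yields $f(G')-f(G)=(c_{13}-c_{12})+(c_{2i}-c_{3i})\ge V_2>0$, contradicting the maximality of $G$.

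The step I expect to be most delicate is not the weight computation, which collapses cleanly once $d(w)=3$ is established, but the bookkeeping that guarantees the swap is legitimate: confirming that $y$ is genuinely non-adjacent to $v$, that a neighbour $z$ of degree at least $2$ can always be selected, and that connectivity survives the edge exchange. These are precisely the points where the hypotheses $n\ge 7$, the simplicity of $G$, and the already-established absence of $(2,2)$-edges are each invoked, so the correctness of the argument hinges on discharging all of them rather than on any single inequality.
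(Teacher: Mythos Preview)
Your argument is correct and follows essentially the same approach as the paper: invoke Lemma~\ref{lem:V1} to eliminate $(2,2)$-edges, force $d(w)=3$, pick a neighbour of $w$ of degree $\ge 2$, and perform a local swap giving an $f$-increase of $(c_{13}-c_{12})+(c_{2i}-c_{3i})\ge V_2$. The only cosmetic difference is the swap itself: the paper replaces the two edges $uv,wx$ by $uw,vx$ (keeping all degrees fixed), whereas you move a single edge $wy\to vy$ (swapping the degrees of $v$ and $w$); both produce the same edge-type changes and hence the same inequality.
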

\begin{proof}
	We already know from Lemma \ref{lem:V1} that $G$ has no (2,2)-edge. Let $uv$ be an edge in $G$ with $u$ of degree 1 and $v$ of degree $2$. Let $w$ be the second neighbor of $v$. Note that $w$ has degree 3, else $G$ has order $n=3$. Let $x$ and $y$ be the two other neighbors of $w$. At least one of them, say $x$ has degree $i\geq 2$, else $G$ has order $n=5$. We can obtain a chemical graph $G'$ by replacing $uv$ and $wx$ by $uw$ and $vx$. Then $G'$ contradicts the maximality of $G$ since
\begin{center}\hspace{1cm}$f(G')= f(G)+c_{13}-c_{12}+(c_{2i}-c_{3i})\geq f(G)+V_2>f(G).\hfill\qedhere$\end{center}
\end{proof}

\begin{lem}\label{lem:V3}
	Let $f$ be a degree-based topological index  such that $V_3>0$. A chemical graph $G$ that maximizes $f$ over all chemical graphs of the same order and size as $G$ has no (1,3)-edge.
\end{lem}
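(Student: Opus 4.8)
The plan is to mirror the exchange-argument strategy used in Lemmas \ref{lem:V1} and \ref{lem:V2}: assume for contradiction that a maximizing chemical graph $G$ contains a $(1,3)$-edge, and exhibit a local rewiring that strictly increases $f$ while preserving both the order $n$ and the size $m$. The quantity $V_3=c_{22}-c_{13}+\min_{i=1,2,3}(c_{2i}-c_{3i})+\min_{j=2,3}(c_{2j}-c_{3j})$ is built to measure the net gain of turning one vertex of degree $3$ into a vertex of degree $2$: the term $c_{22}-c_{13}$ accounts for converting the $(1,3)$-edge into a $(2,2)$-edge (the degree-$1$ endpoint becoming degree $2$ and the degree-$3$ endpoint dropping to degree $2$), while the two $\min$ terms absorb the worst-case change on the remaining two edges incident to the former degree-$3$ vertex, whose endpoint degrees drop from $3$ to $2$.

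First I would fix an edge $uv$ with $d(u)=1$ and $d(v)=3$, and let $x,y$ be the two other neighbors of $v$. The natural move is to raise the degree of $u$ from $1$ to $2$ and lower the degree of $v$ from $3$ to $2$ by redirecting one of $v$'s edges. Concretely, one rewiring is to delete $vx$ and add $ux$; then $u$ becomes degree $2$, $v$ becomes degree $2$, and $x$ keeps its degree. This replaces the $(1,3)$-edge $uv$ by a $(2,2)$-edge, replaces the $(3,i)$-edge $vx$ (where $i=d(x)$) by a $(2,i)$-edge, and turns the remaining $(3,j)$-edge $vy$ (where $j=d(y)$) into a $(2,j)$-edge. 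The resulting change is exactly $f(G')-f(G)=c_{22}-c_{13}+(c_{2i}-c_{3i})+(c_{2j}-c_{3j})$, which is bounded below by $V_3>0$, contradicting maximality of $G$.

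The main obstacle, as in the previous lemmas, is degeneracy of the small neighborhood: I must verify that the rewiring keeps $G'$ connected, simple, of maximum degree at most $3$, and of order at least $7$, and that each boundary case still yields a valid index-increasing transform. The delicate subcases are when $x$ or $y$ coincides with $u$ (impossible since $d(u)=1$), when $x$ and $y$ are adjacent or share a neighbor, or when $d(x)=1$, forcing $j$ to be chosen so that the $\min_{j=2,3}$ term is the one that applies; here the order bound $n\geq 7$ guarantees that not both $x$ and $y$ can have degree $1$, so at least one endpoint has degree in $\{2,3\}$, validating the second $\min$ term. The ranges $i\in\{1,2,3\}$ and $j\in\{2,3\}$ in the definition of $V_3$ are precisely calibrated to cover these cases, so each subcase gives $f(G')\geq f(G)+V_3>f(G)$; I would present whichever redirection preserves connectivity in each configuration and confirm it stays within the chemical-graph class.
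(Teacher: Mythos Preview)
Your proposal is correct and mirrors the paper's proof almost exactly: the paper also takes $uv$ with $d(u)=1$, $d(v)=3$, notes that at least one of the other two neighbors $x,y$ of $v$ has degree $\geq 2$ (else $n=4$), redirects one edge from $v$ to $u$, and computes the resulting gain $c_{22}-c_{13}+(c_{2i}-c_{3i})+(c_{2j}-c_{3j})\geq V_3>0$. The only cosmetic difference is that the paper redirects $vy$ (with $d(y)\geq 2$) rather than $vx$, but since the gain formula is symmetric in the two neighbors this is immaterial; your remarks about connectivity, simplicity, and the degree bounds are all sound, and in fact no delicate subcase analysis beyond ``not both $x,y$ have degree $1$'' is needed.
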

\begin{proof}
	Let $uv$ be an edge with $u$ of degree 1 and $v$ of degree $3$, and let $x$ and $y$ be the two other neighbors of $v$. At least one of $x$ and $y$, say $y$ has degree at least two, else $G$ has order $n=4$. We can then obtain a chemical graph $G'$ by replacing $vy$ by $uy$. Let $i$ be the degree of $x$ and $j$ the degree of $y$.  Then $G'$ contradicts the maximality of $G$ since
	\begin{center}\hspace{1cm}$f(G')=f(G)+c_{22}-c_{13}+(c_{2i}-c_{3i})+(c_{2j}-c_{3j})\geq f(G)+V_3>f(G).\hfill\qedhere$\end{center}
	\end{proof}

\begin{lem}\label{lem:V4}
	Let $f$ be a degree-based topological index  such that $V_4>0$. If a chemical graph $G$  maximizes $f$ over all chemical graphs of the same order and size as $G$ then either $m=n-1$ and $G$ is $\Path{n}$ or $m\geq n$ and $G$ has no (1,2)-edge.
\end{lem}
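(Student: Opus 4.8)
The plan is to argue by contradiction using a local edge exchange that fixes $n$ and $m$ (an $(A,k)$-transform for a suitable $(n,m)$-preserving $A$) and strictly increases $f$ whenever a $(1,2)$-edge can be ``pushed'' toward a vertex of degree $3$. Suppose $G$ is a maximizer with $V_4>0$ and that $G$ contains a $(1,2)$-edge; then $G$ has a vertex $u$ of degree $1$ whose neighbour $v_1$ has degree $2$. Starting from $u$, I would follow the maximal path $u=p_0,v_1,p_2,\dots$ that traverses only vertices of degree $2$, stopping at the first vertex that is not of degree $2$. Either this walk ends at another vertex of degree $1$, or it ends at a vertex $w$ of degree $3$.

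In the second case, write the pendant path as $u,v_1,\dots,v_t,w$ with $t\ge 1$, all $v_i$ of degree $2$, and let $a,b$ be the two neighbours of $w$ other than $v_t$ (distinct from $u$ and from all $v_i$, since those are saturated). I would form $G'$ by deleting $wa$ and adding $ua$; then $u$ and $w$ both become vertices of degree $2$, every $v_i$ keeps degree $2$, and $a,b$ keep their degrees. The graph $G'$ has the same order and size, satisfies $\Delta\le 3$, and is connected because the new edge $ua$ reconnects the two sides of the (possibly bridging) edge $wa$ through $u$; hence it is a chemical graph of the same order and size as $G$. Only $uv_1$ (from $(1,2)$ to $(2,2)$), $v_tw$ (from $(2,3)$ to $(2,2)$), $wb$ (from $(3,d(b))$ to $(2,d(b))$) and the swap $wa\to ua$ (from $(3,d(a))$ to $(2,d(a))$) change weight, so
\[ f(G')-f(G)=2c_{22}-c_{12}-c_{23}+(c_{2,d(a)}-c_{3,d(a)})+(c_{2,d(b)}-c_{3,d(b)})\ge V_4>0, \]
contradicting the maximality of $G$. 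Thus the walk cannot reach a vertex of degree $3$, so it ends at a vertex of degree $1$; since every vertex it meets has all its edges on the walk, connectivity forces $G$ to be exactly this path, i.e. $G=\Path{n}$ and $m=n-1$. This settles the case $m\ge n$ directly: such a $G$ cannot be $\Path{n}$, hence cannot contain any $(1,2)$-edge, which is the second alternative of the statement.

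The remaining and most delicate point is $m=n-1$, where $G$ is a tree and I must rule out a maximizing tree that is \emph{not} $\Path{n}$. The difficulty is genuine: a non-path tree may contain \emph{no} $(1,2)$-edge (e.g. a caterpillar whose leaves all hang directly on vertices of degree $3$, giving only $(1,3)$- and $(3,3)$-edges), and since every weight change of the exchange above contains the term $-c_{12}$, no $V_4$-transform can apply to such a graph. To close this gap I would switch from a local move to a global comparison with $\Path{n}$, using the structural consequence of the hypothesis: bounding the minimum in $V_4$ by $c_{12}-c_{13}$ and by $c_{23}-c_{33}$ yields $2\min_i(c_{2i}-c_{3i})\le (c_{12}-c_{13})+(c_{23}-c_{33})$, whence $V_4\le 2c_{22}-c_{13}-c_{33}$, so $V_4>0$ forces $2c_{22}>c_{13}+c_{33}$ (a companion bound $2c_{22}+c_{12}>2c_{13}+c_{23}$ follows similarly). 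I would then write $f(T)$ for an arbitrary tree $T$ via its counts $x_{ij}$, invoke the tree identities $m=n-1$ and $n_1=n_3+2$ read off from \eqref{eq_n1}--\eqref{m}, and show that trading every $(1,3)$- and $(3,3)$-edge for $(2,2)$-edges only raises the value, giving $f(\Path{n})\ge f(T)$.

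I expect this final counting comparison, rather than the edge exchange, to be the main obstacle, because it must absorb the $c_{12}$ and $c_{23}$ contributions as well, not merely $c_{13},c_{33}$ against $c_{22}$; the clean inequality $2c_{22}>c_{13}+c_{33}$ handles the leaf edges and spine edges of a non-path tree, but pinning down equality (only at $\Path{n}$) and controlling the few $(1,2)$/$(2,3)$ edges that a general tree may still carry will require the companion inequalities above together with a careful telescoping over a longest path of $T$.
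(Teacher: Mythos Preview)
Your edge-exchange argument in the first two paragraphs is exactly the paper's proof: assuming a $(1,2)$-edge $uv$ exists, walk along degree-$2$ vertices to the first vertex $w$ of degree~$3$, swap $wa$ for $ua$, and compute $f(G')-f(G)\ge V_4>0$. The paper does precisely this and stops there.

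Your worry about the case $m=n-1$ with no $(1,2)$-edge is well placed, but for a different reason than you suspect: the lemma as literally written is \emph{false}, so the global-comparison programme you sketch cannot succeed. Take $c_{12}=c_{23}=0$, $c_{13}=c_{33}=0.9$, $c_{22}=1$; then $V_4=2-1.8=0.2>0$. For $n=8$, $m=7$, the caterpillar $T$ with three spine vertices of degree~$3$ (the two ends each carrying two pendant leaves, the middle one carrying one) has $x_{13}=5$, $x_{33}=2$ and $f(T)=6.3$, whereas $f(\Path{8})=5$. A short enumeration over the chemical trees of order~$8$ confirms that $T$ is the maximizer, yet $T\ne\Path{8}$, so the first disjunct of the lemma fails. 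In particular your derived inequality $2c_{22}>c_{13}+c_{33}$ does hold here ($2>1.8$), but it is too weak to force $f(\Path{n})\ge f(T)$.

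What the paper's proof (and your first part) actually establishes is the weaker implication ``if $G$ has a $(1,2)$-edge then $G=\Path{n}$'', equivalently ``$G=\Path{n}$ or $G$ has no $(1,2)$-edge''. That is all the paper ever uses: every invocation of this lemma is paired with the preceding one ($V_3>0\Rightarrow x_{13}=0$), and together they give ``$G=\Path{n}$ or $n_1=0$''. So your first two paragraphs already constitute a complete proof of what is actually needed; the remainder of your outline pursues a statement that does not hold.
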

\begin{proof}
	Let $uv$ be an edge with $u$ of degree 1 and $v$ of degree $2$. If $G$ is not $\Path{n}$, then there is a vertex $w$ of degree 3 in $G$ such that $v$ and $w$ are linked by a chain in which all internal vertices have degree 2 in $G$. Let $x$ and $y$ be the two neighbors of $w$ that are not on the chain. We can obtain a chemical graph $G'$ by replacing $wx$ by $ux$. Let $i$ be the degree of $x$ and $j$ the degree of $y$.  Then $G'$ contradicts the maximality of $G$ since
	\begin{center}\hspace{1cm}$f(G')=f(G)+
	2c_{22}-c_{12}-c_{23}+(c_{2i}-c_{3i})+(c_{2j}-c_{3j})\geq f(G)+V_4>f(G).\hfill\qedhere$\end{center}
\end{proof}

\section{Characterization of extremal graphs }\label{secFamilies}

We first characterize the extremal graphs of 29 of the 33 degree-based topological indices of Table \ref{tab_inv}. The proofs involve the following values:
    \begin{align*}
    V_5&=c_{13}-4c_{23}+{3c}_{33}, \\[-2pt]
    V_6&={c}_{22}-2c_{23}+c_{33}, \\[-2pt]
    V_7&=c_{12}-c_{13}-c_{23}+c_{33}, \\[-2pt]
    V_8&=-2c_{12}+3c_{13}-2c_{23}+c_{33}.
    \end{align*}
\subsection{Five graph families for 29 topological indices}
We first  show that the chemical graphs in $F_1$ maximize all degree-based topological indices  such that $V_1, V_2$ and $V_5$ are strictly positive.
\begin{thm}\label{thm_F1}
	Let $f$ be a degree-based topological index such that $V_1>0$, $V_2>0$ and $V_5>0$. A chemical graph $G$ maximizes $f$ over all chemical graphs of the same order and size as $G$ if and only if $G\in F_1$.
\end{thm}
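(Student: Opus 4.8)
The plan is to begin from the two structural lemmas already proved. Since $V_1>0$ and $V_2>0$, Lemmas~\ref{lem:V1} and~\ref{lem:V2} guarantee that any chemical graph $G$ maximizing $f$ over all chemical graphs of its order and size has $x_{22}=0$ and $x_{12}=0$. This is the core reduction on which everything else rests: it eliminates two of the five edge-types, leaving only the types $(1,3)$, $(2,3)$, $(3,3)$ that occur in $F_1$.

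Next I would exploit that, once $x_{12}=x_{22}=0$, the two linear relations \eqref{n} and \eqref{m} determine $x_{13}$ and $x_{33}$ as affine functions of the single remaining free parameter $x_{23}$. Solving yields $x_{13}=\frac{6n-4m-x_{23}}{4}$ and $x_{33}=\frac{8m-6n-3x_{23}}{4}$. Substituting these into $f=c_{13}x_{13}+c_{23}x_{23}+c_{33}x_{33}$, the coefficient of $x_{23}$ collapses to $-\tfrac14(c_{13}-4c_{23}+3c_{33})=-\tfrac14 V_5$. Because $V_5>0$, the restriction of $f$ to graphs with $x_{12}=x_{22}=0$ is a \emph{strictly decreasing} affine function of $x_{23}$. (Equivalently, the $(n,m)$-preserving vector $A=(0,1,0,-4,3)$, which lowers $x_{23}$ by $4$, raises $f$ by $V_5$.) Hence a maximizer must take $x_{23}$ as small as feasibly possible.

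Then I would pin the minimal admissible $x_{23}$ down by parity and integrality. Since $x_{12}=x_{22}=0$ forces $n_2=x_{23}/2\in\mathbb{N}$, the value $x_{23}$ is even; and integrality of $x_{13}$ (equivalently of $x_{33}$) forces $x_{23}\equiv 6n-4m\equiv 2n \pmod 4$. Thus the smallest feasible value is $x_{23}=0$ when $n$ is even and $x_{23}=2$ when $n$ is odd, which together with the formulas above reproduces exactly the two rows of Definition~\ref{def_F1}. Non-negativity of $x_{13}$ and $x_{33}$ at these values, and the existence of a realizing chemical graph, follow from the range $n\ge 7$, $n-1\le m\le\frac{3n-3}{2}$ and from the realizability of $F_1$ already established in Section~\ref{sec_4families}.

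Finally I would assemble the ``if and only if''. Writing $f_{\max}$ for the common value $f$ takes on the $F_1$ data, the previous steps show $f_{\max}$ equals the maximum of $f$ over all chemical graphs of this order and size: a maximizer exists by finiteness, has $x_{12}=x_{22}=0$ by the lemmas, and then satisfies $f\le f_{\max}$, while a realizing $F_1$ graph attains $f_{\max}$. For the backward direction, any $G\in F_1$ has $x_{12}=x_{22}=0$ and the minimal $x_{23}$, so $f(G)=f_{\max}$ and $G$ is extremal. For the forward direction, a maximizer $G$ has $x_{12}=x_{22}=0$ and $f(G)=f_{\max}$; by strict monotonicity in $x_{23}$ its value of $x_{23}$ must be the minimum, so all five $x_{ij}$ of $G$ coincide with the $F_1$ data, i.e. $G\in F_1$. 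The point demanding the most care --- the main obstacle --- is precisely this strictness-plus-feasibility step: I must ensure that the minimal residue class of $x_{23}$ is genuinely realizable (not merely non-negative) and that after the lemmas $x_{23}$ really is the sole degree of freedom; both rely on the admissible range of $m$ and on the realizability lemma of Section~\ref{sec_4families}.
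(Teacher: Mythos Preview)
Your proposal is correct and follows essentially the same approach as the paper: both use Lemmas~\ref{lem:V1}--\ref{lem:V2} to force $x_{12}=x_{22}=0$, then exploit the $(n,m)$-preserving vector $(0,1,0,-4,3)$ with value $V_5>0$ to show that the maximizer has the smallest admissible $x_{23}$, namely $2(n\bmod 2)$. Your presentation via the explicit affine parametrization $x_{13}=\tfrac{6n-4m-x_{23}}{4}$, $x_{33}=\tfrac{8m-6n-3x_{23}}{4}$ and the congruence $x_{23}\equiv 2n\pmod 4$ is slightly more transparent than the paper's $(A,\lfloor x_{23}/4\rfloor)$-transform phrasing, but the underlying argument is identical.
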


\begin{proof}
 Let $G$ be a chemical graph of order $n$, size $m$ and with $x_{ij}$ $(i,j)$-edges. Assume that it maximizes $f$ over all chemical graphs of order $n$ and size  $m$. As shown in Lemma \ref{lem:V2}, $V_1>0$ and $V_2>0$ imply $x_{12}=x_{22}=0$. Hence, $2n_2 = x_{23}$, which means that $x_{23}$ is even.
 
 If $x_{23}\leq 2$ then $n_2 = n \mymod 2$ (since the number $n_1+n_3$ of odd degree vertices is even), which implies $x_{23}=2(n \mymod 2)$. Equations \eqref{n} and \eqref{m} then give $x_{13} = \frac{3n - 2m - (n \mod 2)}{2}$ and 
$x_{33} = \frac{4m - 3n - 3(n \mod 2)}{2}$, which means that $G$ belongs to $F_1$.
 
 If $x_{23} \ge 4$, then consider the vector $A=(0,1,0,-4,3)$ associated with $V_5$ and let $(x'_{12},x'_{13},x'_{22},x'_{23},x'_{33})$ be the $(A,\lfloor \frac{x_{23}}{4}\rfloor)$-transform of $(x_{12},x_{13},x_{22},x_{23},x_{33})$. We then have $x'_{12}=x'_{22}=0$ and $x'_{23}\leq 2$. Since $A$ is $(n,m)$-preserving, we conclude as above that 
 $x'_{13}=\frac{3n - 2m - (n \mod 2)}{2}$ and $x'_{33} = \frac{4m - 3n - 3(n \mod 2)}{2}$. Let $G'$ be a graph in $F_1$ having exactly $x'_{ij}$ $(i,j)$-edges. The maximality of $G$ is contradicted by $G'$ since  \begin{center}\hspace{1cm}$f(G')=f(x'_{12},x'_{13},x'_{22},x'_{23},x'_{33})=f(G)+V_5\lfloor \frac{x_{23}}{4}\rfloor>f(G).\hfill\qedhere$\end{center}
\end{proof}

\begin{thm}\label{thm_F2}
	Let $f$ be a degree-based topological index such that  $V_3>0$, $V_4>0$ and $V_6>0$. A chemical graph $G$ maximizes $f$ over all chemical graphs of the same order and size as $G$ if and only if $G\in F_2$.
\end{thm}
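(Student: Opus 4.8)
The plan is to follow the template of Theorem~\ref{thm_F1}: first reduce to the case $x_{12}=x_{13}=0$ using the structural lemmas, then drive $x_{23}$ down to its minimum feasible value by an $(n,m)$-preserving transform associated with $V_6$. Let $G$ be a chemical graph of order $n$ and size $m$ maximizing $f$. I would begin by invoking Lemma~\ref{lem:V3} (using $V_3>0$) to get $x_{13}=0$, and Lemma~\ref{lem:V4} (using $V_4>0$) to split into two cases: either $m=n-1$ and $G=\Path{n}$, which is exactly the first row of $F_2$, or $m\geq n$ and $x_{12}=0$. From here I work in the second case, where $x_{12}=x_{13}=0$. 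Equation~\eqref{eq_n2} then reads $2n_2=2x_{22}+x_{23}$, so $x_{23}$ is even, and solving \eqref{n}--\eqref{m} yields $n_2=3n-2m$ and $n_3=2m-2n$, both determined by $(n,m)$.

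The engine of the argument is the transform $A=(0,0,1,-2,1)$, which one checks is $(n,m)$-preserving (it satisfies \eqref{eq_n_aij}--\eqref{eq_m_aij}) and whose per-step effect on $f$ is exactly $V_6$. Applying the $(A,k)$-transform keeps $x_{12}=x_{13}=0$, lowers $x_{23}$ by $2k$, and raises both $x_{22}$ and $x_{33}$ by $k$. Since $V_6>0$, I want to push $x_{23}$ as low as conditions~\eqref{cond1}--\eqref{cond6} permit. Because $x_{12}=x_{13}=0$ forces $x_{22}=n_2-\tfrac{x_{23}}{2}$ and $x_{33}=\tfrac{3n_3-x_{23}}{2}$, once $x_{23}$ is fixed the whole vector is determined, so the task reduces to identifying the smallest admissible even $x_{23}$ in each range of $m$.

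I would then treat three sub-cases. When $m=n$, \eqref{n}--\eqref{m} force $n_3=0$, so $G$ is a connected $2$-regular graph, i.e.\ $\C{n}$ with $x_{22}=m$, matching $F_2$. When $m>n+1$, both $n_2=3n-2m>0$ and $n_3=2m-2n\geq 4$, so condition~\eqref{cond4} gives $x_{23}\geq 1$, hence $x_{23}\geq 2$; if $x_{23}>2$, the $(A,\tfrac{x_{23}-2}{2})$-transform lands on the unique $F_2$ vector $(0,0,3n-2m-1,2,3m-3n-1)$ --- realized by some chemical graph by the standing existence assumption --- and increases $f$, contradicting maximality. The delicate sub-case is $m=n+1$, where $n_3=2$: here condition~\eqref{cond1} gives $x_{33}\leq\binom{2}{2}=1$, so $x_{23}=3n_3-2x_{33}=6-2x_{33}\geq 4$ cannot be lowered to $2$. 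Thus the minimum is $x_{23}=4$, $x_{33}=1$; if instead $x_{23}=6$, the single step $k=1$ transform reaches this $F_2$ vector and raises $f$. In every case a maximizer has exactly the $F_2$ values of $(x_{12},\dots,x_{33})$, which establishes the ``only if'' direction.

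For the ``if'' direction I would note that for fixed $(n,m)$ the family $F_2$ consists of graphs all sharing a single $x_{ij}$ vector, hence all having the same value of $f$; since a maximizer exists and lies in $F_2$, every graph of $F_2$ is a maximizer. I expect the $m=n+1$ sub-case to be the main obstacle, precisely because the naive target $x_{23}=2$ is blocked by the degree-$3$ count $n_3=2$ together with the simple-graph bound~\eqref{cond1}; pinning down this boundary value (and verifying $x_{23}$ cannot drop below $4$) is where the care is needed.
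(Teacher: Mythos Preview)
Your proposal is correct and follows essentially the same approach as the paper's own proof: apply Lemmas~\ref{lem:V3} and~\ref{lem:V4} to reduce to $n_1=0$ (or $G=\Path{n}$ when $m=n-1$), then use the $(n,m)$-preserving vector $A=(0,0,1,-2,1)$ associated with $V_6$ to push $x_{23}$ down to its minimum feasible value, with a separate sub-case for $m=n+1$ where $n_3=2$ forces $x_{33}\leq 1$ and hence $x_{23}\geq 4$. Your write-up is in fact slightly more explicit than the paper's in two places: you cite condition~\eqref{cond4} to justify $x_{23}\geq 2$ when $n+1<m\leq\frac{3n-3}{2}$ (the paper silently skips the possibility $x_{23}=0$, relying on connectedness), and you spell out the ``if'' direction, which the paper treats as immediate.
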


\begin{proof}
Let $G$ be a chemical graph of order $n$, size $m$ and with $x_{ij}$ $(i,j)$-edges. Assume that it maximizes $f$ over all chemical graphs of order $n$ and size  $m$.  As shown by Lemmas \ref{lem:V3} and \ref{lem:V4}, $V_3>0$ and $V_4>0$ imply $G=\Path{n}$ or $n_1=0$. Hence, if $m=n-1$, then $G=\Path{n}$ (since trees have vertices of degree 1), which means that $x_{12}=2$, $x_{22}=m-2$ and $G\in F_2$. So assume $n\leq m<\frac{3n-3}{2}$, which implies that $n_1=0$ and $x_{23}$ is even.
	\begin{itemize}[nosep]
		\item If $m=n$, Equations~\eqref{n} and \eqref{m} give $x_{23}=x_{33}=0$, which implies $x_{22}=m$ and $G\in F_2$.
		\item If $m=n+1$, Equations~\eqref{n} and \eqref{m} give $x_{23}+2x_{33}=6$, which implies $n_3=2$ and $x_{33}\leq 1$. Hence, there are only two possibilities:
		\begin{itemize}[nosep]
			\item $x_{22}=m-6$, $x_{23}=6$,$x_{33}=0$, or
			\item $x_{22}=m-5$, $x_{23}=4$,$x_{33}=1$.
		\end{itemize}	
		Since $c_{22}{-}2c_{23}{+}c_{33}{=}V_6>0$, the second solution has a larger value, which implies $G{\in} F_2$.
		\item If $n+1< m<\frac{3n-3}{2}$ and $x_{23}=2$, then Equations~\eqref{n} and \eqref{m} give $x_{22}=3n-2m-1$ and $x_{33}=3m-3n-1$, which implies $G\in F_2$. So assume $x_{23}\geq 4$, consider the vector $A=(0,0,1,-2,1)$ associated with $V_6$ and let $(x'_{12},x'_{13},x'_{22},x'_{23},x'_{33})$ be the $(A, \frac{x_{23}-2}{2})$-transform of $(x_{12},x_{13},x_{22},x_{23},x_{33})$. Hence, $x'_{12}=x'_{13}=0$ and $x'_{23}=2$. Since $A$ is $(n,m)$-preserving, we conclude as above that $x'_{22}=3n-2m-1$ and $x'_{33}=3m-3n-1$. Consider any chemical graph $G'$ in $F_2$ having exactly $x'_{ij}$ $(i,j)$-edges. The maximality of $G$ is contradicted by $G'$ since  	\vspace{-5pt}\begin{center}\hspace{1cm}$f(G')=f(x'_{12},x'_{13},x'_{22},x'_{23},x'_{33})=f(G)+V_6(\frac{x_{23}-2}{2})>f(G).\hfill\qedhere$\end{center}
\end{itemize}
\end{proof}

\begin{cor}\label{cor_F12}
	$F_1\cup F_2$ is the set of extremal chemical graphs for the 13 degree-based topological indices ABSC, AG, AG-GA, Extended, GA, GouravaSC, Harmonic, Randi\'c,  Sombor, rSombor,  SumConn, rSumConn and lnZagreb1. 
\end{cor}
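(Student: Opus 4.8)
The plan is to invoke Theorems~\ref{thm_F1} and~\ref{thm_F2} directly. By Theorem~\ref{thm_F1}, any degree-based topological index $f$ with $V_1>0$, $V_2>0$ and $V_5>0$ is maximized exactly by the graphs in $F_1$; by Theorem~\ref{thm_F2}, any $f$ with $V_3>0$, $V_4>0$ and $V_6>0$ is maximized exactly by the graphs in $F_2$. Recall from the preliminaries that minimizing $f$ is the same as maximizing its complement \textoverline{$f$}, whose coefficients are $-c_{ij}$. Hence the statement that $F_1\cup F_2$ is the set of extremal graphs for a given index $f$ should mean that, for each of the 13 listed indices, \emph{one} of the two families ($F_1$ or $F_2$) characterizes the maximizers and the \emph{other} characterizes the minimizers; the task is therefore to verify, index by index, the appropriate sign conditions on the $V_k$ for both $f$ and \textoverline{$f$}.

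First I would set up the bookkeeping: for each of the 13 indices I read off the five coefficients $c_{12},c_{13},c_{22},c_{23},c_{33}$ from Table~\ref{tab_inv} by substituting the degree pairs $(1,2),(1,3),(2,2),(2,3),(3,3)$ into the formula for $c_{ij}$. Then I compute the eight quantities $V_1,\dots,V_8$ (really $V_1,\dots,V_6$ suffice here) for $f$, and note that the $V_k$ for \textoverline{$f$} are simply the negatives of certain combinations — more precisely, replacing every $c_{ij}$ by $-c_{ij}$ turns each ``$\min$'' into a ``$-\max$'', so one must recompute rather than merely flip signs. The expected pattern is: for each index, either ($V_1,V_2,V_5>0$ for $f$ and $V_3,V_4,V_6>0$ for \textoverline{$f$}), giving maximizers in $F_1$ and minimizers in $F_2$; or the symmetric situation giving maximizers in $F_2$ and minimizers in $F_1$. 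In either case the extremal set is exactly $F_1\cup F_2$.

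The main obstacle is that this is a finite but genuinely tedious verification: $13$ indices $\times$ two directions $\times$ six inequalities, several of which involve nested minima of irrational expressions (square roots, logarithms) that must be compared by hand or shown to have a definite sign. The cleanest presentation is a table listing, for each index and each direction, the numerical values of the relevant $V_k$, confirming their signs. I would organize the thirteen cases by the monotonicity of the coefficient structure: indices whose $c_{ij}$ increase with the degrees (e.g.\ rRandi\'c-type behaviour: Sombor, rSombor, rSumConn, Extended, Forgotten-like growth) will push mass toward high-degree edges and land maximizers in $F_1$, whereas ``connectivity'' indices that decrease with degree (Randi\'c, Harmonic, SumConn, GouravaSC, GA, ABSC, AG-GA, lnZagreb1) reverse the roles.

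The only genuinely delicate step is checking the $\min$-terms in $V_1,\dots,V_4$: one must confirm which index the minimum attains before concluding its sign, and for the near-critical indices (plausibly AG-GA, whose coefficient formula mixes an increasing and a decreasing term) verify that the strict inequalities $V_k>0$ really do hold rather than merely $V_k\ge 0$. A boundary case $V_k=0$ would break uniqueness of the extremal family and require separate treatment, so I would pay particular attention to any index whose coefficients are nearly balanced, confirming strictness numerically. Once every sign is pinned down, the corollary follows immediately by quoting Theorems~\ref{thm_F1} and~\ref{thm_F2} together with the complement reformulation of minimization.
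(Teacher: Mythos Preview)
Your plan is exactly the paper's approach: the paper's proof consists of nothing more than the assertion that one can check $V_1,V_2,V_5>0$ for one orientation of each index and $V_3,V_4,V_6>0$ for the other, then quote Theorems~\ref{thm_F1} and~\ref{thm_F2}. One small correction to your heuristic sorting: ABSC, AG, AG-GA and lnZagreb1 actually belong with the $F_1$-maximizing group (their $c_{ij}$ grow with degree sum or with imbalance), not with the Randi\'c-type group; this would come out in the numerical table you describe, so it does not affect the soundness of the plan.
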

\begin{proof}
	It is easy to check that 
	\begin{itemize} [nosep]
		\item $V_1,V_2$ and $V_5$ are strictly positive for ABSC, AG, AG-GA,  Extended, rSumConn, Sombor, rSombor, lnZagreb1,  \textoverline{GA}, \textoverline{GouravaSC}, \textoverline{Harmonic},  \textoverline{Randi\'c} and \textoverline{SumConn}, which implies that $F_1$ is the set of chemical graphs which maximize ABSC, AG, AG-GA,  Extended, rSumConn, Sombor, rSombor, lnZagreb1 and minimize GA,  GouravaSC, Harmonic, Randi\'c and SumConn.
		\item $V_3,V_4$ and $V_6$ are strictly positive for 		\textoverline{ABSC}, \textoverline{AG}, \textoverline{AG-GA}, \textoverline{Extended}, \textoverline{Sombor}, \textoverline{rSombor}, \textoverline{rSumConn}, \textoverline{lnZagreb1}, GA, GouravaSC, Harmonic,Randi\'c and SumConn, which implies that $F_2$ is the set of chemical graphs which minimize ABSC, AG,AG-GA, Extended, Sombor, rSombor, rSumConn, lnZagreb1 and maximize GA, GouravaSC,Harmonic,Randi\'c and SumConn.\qedhere
		\end{itemize}
\end{proof}

We now characterize the degree-based topological indices $f$ for which the chemical graphs in $F_3$ and $F_4$ maximize $f$.

\begin{thm}\label{thm_F3}
	Let $f$ be a degree-based topological index such that  $V_1{>}0, V_6{>}0, V_7{>}0$ and $V_8{>}0$. A chemical graph $G$ maximizes $f$ over all graphs of the same order and size as $G$ if and only if $G\in F_3$.
\end{thm}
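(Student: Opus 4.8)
### Proof Proposal for Theorem~\ref{thm_F3}

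\textbf{Overall strategy.} The plan is to mimic the proof structure of Theorem~\ref{thm_F1} and Theorem~\ref{thm_F2}, since $F_3$ resembles $F_1$ in its edge-count tables. Let $G$ be a chemical graph of order $n$, size $m$, with $x_{ij}$ $(i,j)$-edges, and assume $G$ maximizes $f$. I would first invoke the structural lemmas of Section~4 to kill off unwanted edge types, then argue that any remaining excess of $(2,3)$-edges can be removed by an $(n,m)$-preserving transform that strictly increases $f$, forcing $G$ into $F_3$.

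\textbf{Step 1: Eliminating edge types via the lemmas.} Since $V_1>0$, Lemma~\ref{lem:V1} gives $x_{22}=0$. The remaining hypotheses $V_6,V_7,V_8>0$ do not directly match Lemmas \ref{lem:V1}--\ref{lem:V4}, so the values $V_7$ and $V_8$ must be used through transformation arguments rather than structural lemmas. My expectation is that $V_7>0$ (associated with a vector like $A=(1,-1,0,-1,1)$, which one checks is $(n,m)$-preserving) rules out configurations mixing $(1,2)$- and $(1,3)$-edges, and $V_8>0$ (associated with $A=(-2,3,0,-2,1)$) controls the parity/count of $(1,2)$-edges. I would verify directly that these two vectors satisfy Equations \eqref{eq_n_aij}--\eqref{eq_m_aij} so that applying an $(A,k)$-transform preserves $n$ and $m$.

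\textbf{Step 2: Forcing the $F_3$ profile by transforms.} With $x_{22}=0$, Equations \eqref{eq_n2}--\eqref{eq_n3} give $2n_2=x_{12}+x_{23}$ and constrain parities. I would split on the parity of $n$. For $n$ even, the target is $x_{12}=x_{23}=0$ with $x_{13}=\frac{3n-2m}{2}$ and $x_{33}=\frac{4m-3n}{2}$; for $n$ odd, the target is $x_{12}=1,x_{23}=1$ with $x_{13}=\frac{3n-2m-3}{2}$ and $x_{33}=\frac{4m-3n-1}{2}$. Whenever the current graph has more $(1,2)$-, $(1,3)$- or $(2,3)$-edges than the $F_3$ profile allows, I would exhibit the appropriate $(A,k)$-transform (built from the $V_6$, $V_7$, $V_8$ vectors, applied with $k=\lfloor\cdot\rfloor$ as in Theorem~\ref{thm_F1}) whose net effect on $f$ is $k\cdot V_i>0$, producing a graph $G'\in F_3$ with $f(G')>f(G)$ and contradicting maximality. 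The existence of such $G'$ is guaranteed by the realizability discussion following Definition~\ref{def_F5}.

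\textbf{Main obstacle.} The hard part will be Step 1 and Step 2 together: identifying the \emph{correct} $(n,m)$-preserving vectors attached to $V_7$ and $V_8$ and verifying that each transform (a) stays within the chemical-graph constraints \eqref{cond1}--\eqref{cond6}, and (b) drives the profile monotonically toward $F_3$ without introducing a newly-forbidden edge type. Because three independent positivity conditions ($V_6,V_7,V_8$) must jointly pin down a two-case target, I anticipate needing a careful case analysis on which $x_{ij}$ is in excess, choosing at each stage a transform that reduces one coordinate while keeping the others admissible. The parity bookkeeping (even vs.\ odd $n$, and the fixed residual $x_{12}=x_{23}=1$ in the odd case) is where the argument is most delicate, and I would treat it exactly as the even/odd split in the proof of Theorem~\ref{thm_F1}.
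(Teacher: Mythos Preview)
Your overall plan---kill $x_{22}$ via Lemma~\ref{lem:V1}, then push the remaining profile to the $F_3$ target by $(n,m)$-preserving transforms---is viable, but the proposal is missing the one structural fact that makes it work, and the paper's own proof proceeds quite differently.

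\textbf{What the paper does.} The paper does \emph{not} use abstract $(A,k)$-transforms here. After $x_{22}=0$, it splits on $n_2\in\{0,1,{>}1\}$ and argues by explicit edge-swaps in $G$. For $n_2=1$ it compares the two possible profiles and uses $V_7>0$. For $n_2>1$ it first uses $V_6>0$ structurally (an edge-replacement creating a $(2,2)$-edge) to force every degree-$2$ vertex with two degree-$3$ neighbours to sit in a triangle, then deduces $|W_{33}|\le 1$, and finally applies an edge-move whose gain is $V_8$ or $V_7+V_8$. So in the paper $V_6$ is genuinely used.

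\textbf{Where your sketch has a gap.} If you go the transform route and compare $G$ directly with a graph $G'\in F_3$ of the same $(n,m)$, one computes (with $a=x_{12}$, $b=x_{23}$, $\epsilon=n\bmod 2$)
\[
f(G')-f(G)\;=\;\tfrac{b-a}{2}\,V_7\;+\;\tfrac{a+b-2\epsilon}{4}\,V_8 .
\]
The coefficient of $V_8$ is $\ge 0$ because $a+b=2n_2$ and $n_2\ge \epsilon$. But the coefficient of $V_7$ is $\ge 0$ only if $x_{23}\ge x_{12}$, and your proposal never establishes this. It holds because, with $x_{22}=0$ and $G$ connected of order $\ge 7$, no degree-$2$ vertex can have two degree-$1$ neighbours (that would force $G=\Path{3}$), so each degree-$2$ vertex contributes at most one $(1,2)$-edge and at least one $(2,3)$-edge, whence $x_{12}\le n_2\le x_{23}$. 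Without this observation the decomposition can have a negative coefficient and the argument collapses. Once you add it, the direct comparison works and in fact does \emph{not} need $V_6>0$ at all---so your inclusion of $V_6$ among the transform vectors is unnecessary, and your stated ``main obstacle'' (realizability of intermediate states) evaporates, since you jump straight to the $F_3$ profile whose realizability is already established in Section~\ref{sec_4families}.

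In short: your route is different from the paper's and can be made shorter, but as written it lacks the key inequality $x_{12}\le x_{23}$ and the explicit $V_7,V_8$ decomposition above; supply those and drop the $V_6$ detour.
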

\begin{proof}
Let $G$ be a chemical graph of order $n$, size $m$ and with $x_{ij}$ $(i,j)$-edges. Assume that it maximizes $f$ over all chemical graphs of order $n$ and size  $m$.   
	As shown by Lemma \ref{lem:V1}, $V_1>0$ implies $x_{22}=0$. Let $W_{33}$ be the set of vertices of degree 2 in $G$ with two neighbors of degree 3, and let $W_{13}$ be the set of vertices of degree 2 in $G$ with one neighbor of degree 1 and the other of degree 3. Since $n>3$, we have $n_2=|W_{33}| +|W_{13}|$.
	\begin{itemize}[nosep]
		\item If $n_2=0$, then $x_{12}=x_{22}=x_{23}=0$ and Equations~\eqref{n} and \eqref{m} give $x_{13}=\frac{3n-2m}{2}$ and $x_{33}=\frac{4m-3n}{2}$, which implies that $G$ belongs to $F_3$.
		\item If $n_2=1$, then let $v$ be the vertex of degree 2 in $G$.
        \begin{itemize}[nosep]
        \item if $v\in W_{13}$ then $x_{12}=x_{23}=1,x_{22}=0$  and Equations~\eqref{n} and \eqref{m} give $x_{13}=\frac{3n-2m-3}{2}$ and $x_{33}=\frac{4m-3n-1}{2}$;
        \item if $v\in W_{33}$ then $x_{12}=x_{22}=0,x_{23}=2$ and Equations~\eqref{n} and \eqref{m} give $x_{13}=\frac{3n-2m-1}{2}$ and $x_{33}=\frac{4m-3n-3}{2}$.
        \end{itemize}
        Since $c_{12}-c_{13}-c_{23}+c_{33}=V_7>0$, the first case has a larger value $f(G)$, which implies that $G$ belongs to $F_3$.
		\item If $n_2>1$, then the two neighbors of each vertex in $W_{33}$ are adjacent. Indeed, if a vertex $v\in W_{33}$ has two non-adjacent neighbors $u_1$ and $u_2$, then consider any other vertex $w$ of degree 2  and let $u_3$ be one of its neighbors: by replacing $vu_1,vu_2,wu_3$ by $vw,vu_3$ and $u_1u_2$, we get a chemical graph $G'$ which contradicts the maximality of $G$ since        \vspace{-4pt}$$f(G')=f(G)+c_{22}-2c_{23}+c_{33}=f(G)+V_6>f(G).$$
        
        \vspace{-4pt}Let us now show that $|W_{33}|\leq 1$. Assume by contradiction that $W_{33}$ contains at least two vertices $v_1$ and $v_2$. Since $n>4$,  there are two non-adjacent vertices $u_1$ and $u_2$ such that $u_1$ is adjacent to $v_1$ but not to $v_2$, while $u_2$ is adjacent to $v_2$ but not to $v_1$. Let $w_1$ be the second neighbor of $v_1$, and let $G'$ be the chemical graph obtained from $G$ by replacing $v_1w_1$ and $v_2u_2$ by $v_1u_2$ and $v_2w_1$. Then $G'$ has $n_2>1$ vertices of degree 2 and one of them, namely $v_1$, has two non-adjacent neighbors $u_1,u_2$ of degree 3. We have shown above that this implies that $G'$ does not maximize $f$ while $f(G')=f(G)$, a contradiction.
        
        Hence, $|W_{33}|\leq 1$, which implies $|W_{13}|\geq 1$. So let $v$ be a vertex in $W_{13}$, let $u$ be another vertex of degree 2, let $w$ be the neighbor of $v$ of degree 1, and let $G'$ be the chemical graph obtained from $G$ by replacing $vw$ by $uw$:
        \begin{itemize}[nosep]
        \item if $u\in W_{13}$ then $f(G')=f(G)-2c_{12}+3c_{13}-2c_{23}+c_{33}=f(G)+V_8>f(G)$;
        \item if $u\in W_{33}$ then $f(G')=f(G)-c_{12}+2c_{13}-3c_{23}+2c_{33}=f(G)+V_7+V_8>f(G)$.
        \end{itemize}
        In both cases, $G'$ contradicts the maximality of $G$.\qedhere
	\end{itemize} 
\end{proof}

\begin{thm}\label{thm_F4}
	Let $f$ be a degree-based topological index such that  $V_3{>}0$, $V_4{>}0$ and $V_6{<}0$. A chemical graph $G$ maximizes $f$ over all graphs of the same order and size as $G$ if and only if $G\in F_4$.
\end{thm}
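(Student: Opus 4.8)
The plan is to mirror the proof of Theorem~\ref{thm_F2}: I would use Lemmas~\ref{lem:V3} and~\ref{lem:V4} to eliminate the degree-$1$ vertices, and then exploit the fact that, once $n_1=0$, the index $f$ becomes an affine function of the single variable $x_{33}$ with slope $V_6$. The only structural change from Theorem~\ref{thm_F2} is that $V_6<0$ now pushes the optimum in the opposite direction, so that $x_{33}$ is minimized rather than maximized.

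Concretely, let $G$ be a chemical graph of order $n$, size $m$ and with $x_{ij}$ $(i,j)$-edges that maximizes $f$. Since $V_3>0$ and $V_4>0$, Lemmas~\ref{lem:V3} and~\ref{lem:V4} give $x_{13}=0$ together with either $G=\Path{n}$ (when $m=n-1$) or $x_{12}=0$. If $m=n-1$ then $G=\Path{n}$, so $x_{12}=2$, $x_{22}=m-2$ and all other $x_{ij}=0$, which is the first row of Definition~\ref{def_F4}; hence $G\in F_4$. I may therefore assume $n\leq m\leq\frac{3n-3}{2}$, which forces $n_1=x_{12}+x_{13}=0$. Subtracting Equation~\eqref{n} from Equation~\eqref{m} with $x_{12}=x_{13}=0$ gives $x_{23}+2x_{33}=6(m-n)$, hence $x_{23}=6(m-n)-2x_{33}$ and $x_{22}=m-x_{23}-x_{33}=6n-5m+x_{33}$. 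Substituting into $f(G)=c_{22}x_{22}+c_{23}x_{23}+c_{33}x_{33}$ yields
$$f(G)=c_{22}(6n-5m)+c_{23}(6m-6n)+x_{33}\,(c_{22}-2c_{23}+c_{33})=K+x_{33}V_6,$$
where $K=K(n,m)$ does not depend on $x_{33}$.

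Because $V_6<0$, maximizing $f$ is the same as minimizing $x_{33}$ subject to the feasibility constraints $x_{33}\geq 0$ and $x_{22}=6n-5m+x_{33}\geq 0$, that is, $x_{33}=\max\{0,\,5m-6n\}$. Reading off the two cases, when $m<\frac{6n}{5}$ the minimum is $x_{33}=0$, forcing $x_{22}=6n-5m$ and $x_{23}=6m-6n$, and when $m\geq\frac{6n}{5}$ the minimum is $x_{33}=5m-6n$, forcing $x_{22}=0$ and $x_{23}=6n-4m$; these are exactly the last two rows of Definition~\ref{def_F4}. To conclude I would argue by contradiction: if $G$ had $x_{33}>\max\{0,\,5m-6n\}$, then any $G'\in F_4$ of the same order and size (which exists by the realizability discussion following the five definitions) would satisfy $f(G')=K+x'_{33}V_6>K+x_{33}V_6=f(G)$ since $x'_{33}<x_{33}$ and $V_6<0$, contradicting maximality. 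The converse is immediate, since all graphs of $F_4$ with a given $(n,m)$ share the same $x_{ij}$ values and therefore the same, maximal, value of $f$.

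The step I expect to need the most care is not the optimization, which is genuinely one-dimensional, but justifying that the extremal value $x_{33}=\max\{0,\,5m-6n\}$ is attainable by an actual chemical graph, that is, that $F_4$ is nonempty for each admissible $(n,m)$; this is exactly the existence property recorded earlier for the five families, and it is what allows me to invoke $G'\in F_4$ instead of performing an explicit edge-swap on $G$. A secondary point to confirm is that no parity obstruction arises: since $x_{23}=6(m-n)-2x_{33}$ is even for every integer $x_{33}$, the integer minimum of $x_{33}$ is attained precisely at the stated values, so the two rows of Definition~\ref{def_F4} indeed describe the unique maximizing edge-partition.
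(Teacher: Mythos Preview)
Your proof is correct and follows essentially the same approach as the paper. The only cosmetic difference is that the paper parametrizes the one-dimensional family by $x_{22}$ (with $x_{33}=5m-6n+x_{22}$) and uses the $(A,k)$-transform with $A=(0,0,-1,2,-1)$ to move to the minimum, whereas you parametrize by $x_{33}$ and make the affine dependence $f(G)=K+x_{33}V_6$ explicit; since $x_{22}=6n-5m+x_{33}$, the two parametrizations are equivalent and lead to the same conclusion.
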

\begin{proof}
Let $G$ be a chemical graph of order $n$, size $m$ and with $x_{ij}$ $(i,j)$-edges. Assume that it maximizes $f$ over all chemical graphs of order $n$ and size  $m$.    As shown by Lemmas \ref{lem:V3} and \ref{lem:V4}, $V_3>0$ and $V_4>0$ imply $G=\Path{n}$ or $n_1=0$. Hence, if $m=n-1$, then $G=\Path{n}$, which means that $x_{12}=2$, $x_{22}=m-2$ and $G\in F_4$. So assume $m>n-1$, which implies $n_1=0$. Equations \eqref{n} and \eqref{m} give $x_{23}=6n-4m-2x_{22}$ and $x_{33}=5m-6n+x_{22}$. Hence, $x_{22}\geq \max\{0, 6n-5m\}$.
    \begin{itemize}[nosep]
		\item If $x_{22}=0$, then $x_{23}=6n-4m$, $x_{33}=5m-6n$ and $G\in F_4$.
        \item If $x_{22}=6n-5m>0$, then $x_{23}=6m-6n$, $x_{33}=0$ and $G\in F_4$.
        \item If $x_{22} > 0$ and $x_{22} \neq 6n - 5m$, then $x_{33}>0$. Consider the vector $A=(0,0,-1,2,-1)$ associated with $-V_6$ and let $(x'_{12},x'_{13},x'_{22},x'_{23},x'_{33})$ be the $(A, \min\{x_{22},x_{22}-6n+5m\})$-transform of $(x_{12},x_{13},x_{22},x_{23},x_{33})$. Hence, $x'_{22}=\max\{0, 6n-5m\}$ and $x'_{12}=x'_{13}=0$. Since $A$ is $(n,m)$-preserving we conclude as above that there is a chemical graph $G'$ in $F_4$ having exactly $x'_{ij}$ $(i,j)$-edges. Then $G'$ contradicts the maximality of $G$ since  \vspace{-4pt}\begin{center}\hspace{1cm}$f(G')=f(x'_{12},x'_{13},x'_{22},x'_{23},x'_{33})=f(G)-V_6\min\{x_{22},x_{22}-6n+5m\}>f(G).\hfill\qedhere$\end{center}
    \end{itemize}
\end{proof}

\begin{cor}\label{cor_F34}
	$F_3\cup F_4$ is the set of extremal chemical graphs for the 10 degree-based topological indices Gourava1, Gourava2, hGourava1, hGourava2, GouravaPC, InvSumDeg, rRandi\'c, Zagreb2, hZagreb1, hZagreb2.
\end{cor}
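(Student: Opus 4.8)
The plan is to apply Theorems~\ref{thm_F3} and \ref{thm_F4} to each of the ten indices (or its complement) by computing the four relevant quantities $V_1, V_3, V_4, V_6, V_7, V_8$ from the $c_{ij}$ formulas in Table~\ref{tab_inv} and checking their signs. Theorem~\ref{thm_F3} tells us that $F_3$ is the set of maximizers whenever $V_1, V_6, V_7, V_8 > 0$, while Theorem~\ref{thm_F4} tells us that $F_4$ is the set of maximizers whenever $V_3, V_4 > 0$ and $V_6 < 0$. So the strategy is to show, for each of the ten indices, that one of $\{f, \overline{f}\}$ satisfies the $F_3$-hypotheses and that the \emph{other} one satisfies the $F_4$-hypotheses. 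Since minimizing $f$ is the same as maximizing $\overline{f}$ (and $\overline{f}$ has coefficients $-c_{ij}$, which negates each $V_k$), the index whose maximizers are $F_3$ will have $F_4$ as its minimizers, and vice versa. This gives $F_3 \cup F_4$ as the full extremal set for each index.

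First I would tabulate, for the degree pairs actually arising in chemical graphs (namely $(1,2),(1,3),(2,2),(2,3),(3,3)$), the coefficients $c_{12}, c_{13}, c_{22}, c_{23}, c_{33}$ for each of the ten indices Gourava1, Gourava2, hGourava1, hGourava2, GouravaPC, InvSumDeg, rRandi\'c, Zagreb2, hZagreb1, hZagreb2. Then I would plug these into the definitions
\begin{align*}
V_6 &= c_{22} - 2c_{23} + c_{33}, \\
V_7 &= c_{12} - c_{13} - c_{23} + c_{33}, \\
V_8 &= -2c_{12} + 3c_{13} - 2c_{23} + c_{33},
\end{align*}
together with $V_1$ (which involves the two minima $\min_i(c_{3i}-c_{2i})$ and $\min_j(c_{3j}-c_{2j})$) and $V_3, V_4$. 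The key observation driving the whole corollary is that these are all ``increasing'' indices in the sense that shifting degree mass toward higher degrees raises the value; concretely I expect $V_6 > 0$ for every index in the list, which is exactly the condition separating the $F_3$/$F_4$ dichotomy from the $F_1$/$F_2$ dichotomy of Corollary~\ref{cor_F12}. For instance, for Zagreb2 one has $c_{22}=4, c_{23}=6, c_{33}=9$, so $V_6 = 4 - 12 + 9 = 1 > 0$, and one checks $V_1, V_7, V_8 > 0$ as well, placing its maximizers in $F_3$; its minimizers then land in $F_4$ via the complement.

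The main obstacle is simply the volume and care of the sign computations: there are ten indices, each requiring the evaluation of several of the $V_k$, and the expressions for $V_1, V_3, V_4$ contain minima over $i \in \{1,2,3\}$ and $j \in \{2,3\}$ that must be resolved case by case before the sign can be read off. The genuinely delicate cases are the nonlinear indices (hGourava1, hGourava2, hZagreb1, hZagreb2, GouravaPC, rRandi\'c) where the $c_{ij}$ involve squares, square roots, or products, so that each $V_k$ must be evaluated numerically or via a careful algebraic argument rather than by inspection; in particular, verifying strict positivity of $V_8 = -2c_{12}+3c_{13}-2c_{23}+c_{33}$ and of the two minima inside $V_1$ and $V_3$ for these indices is where an error is most likely to creep in. I would organize the verification as a single table listing the sign of each required $V_k$ for each index (and note that the complementary inequalities for $\overline{f}$ follow automatically by negation), then invoke Theorems~\ref{thm_F3} and \ref{thm_F4} to conclude that $F_3 \cup F_4$ is precisely the extremal set in each of the twenty maximize/minimize problems, i.e. for all ten indices.
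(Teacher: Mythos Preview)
Your proposal is correct and follows exactly the same approach as the paper: verify by direct computation that $V_1,V_6,V_7,V_8>0$ for each of the ten indices (so Theorem~\ref{thm_F3} gives $F_3$ as the maximizers), and that $V_3,V_4>0$, $V_6<0$ for each complement (so Theorem~\ref{thm_F4} gives $F_4$ as the minimizers). The paper simply states ``it is easy to check'' and lists which indices satisfy which hypotheses, without writing out the numerical verifications you describe.
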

\begin{proof}
It is easy to check that 
	\begin{itemize} [nosep]
		\item $V_1{>}0$, $V_6{>}0$, $V_7{>}0$ and $V_8{>}0$ for  Gourava1, Gourava2, hGourava1, hGourava2, GouravaPC, InvSumDeg, rRandi\'c, Zagreb2, hZagreb1, hZagreb2, which implies that $F_3$ is the set of chemical graphs which maximize these topological indices.
		\item $V_3{>}0,V_4{>}0$ and $V_6{<}0$ for 
		\textoverline{Gourava1}, 
\textoverline{Gourava2}, 
\textoverline{hGourava1}, \textoverline{hGourava2},
\textoverline{ GouravaPC}, 
\textoverline{InvSumDeg}, 
\textoverline{rRandi\'c}, 
		\textoverline{Zagreb2}, 
		\textoverline{hZagreb1}, \textoverline{hZagreb2}, 				
which implies that $F_4$ is the set of chemical graphs which minimize the 10 topological indices.\qedhere
	\end{itemize}
\end{proof}

Note that $F_1\cup F_3$ is the set of chemical graphs with $x_{ij}$ $(i,j)$-edges such that 
		\begin{center}
			\setlength{\extrarowheight}{2pt}
			\begin{tabular}{| c | c | c | c |c | l}\cline{1-5} 
				$x_{12}$ & $x_{13}$ & $x_{22}$& $x_{23}$& $x_{33}$&\\ \cline{1-5}
				0 & $\frac{3n - 2m }{2}$ & 0 & 0 &$\frac{4m - 3n}{2}$& if $n$ if even \\ \cline{1-5}
				1 & $\frac{3n - 2m-3 }{2}$ & 0 & 1 &$\frac{4m - 3n-1}{2}$& \multirow{2}{*}{if $n$ is odd}\\ 				
				0 & $\frac{3n - 2m-1 }{2}$ & 0 & 2 &$\frac{4m - 3n-3}{2}$& \\ \cline{1-5}
			\end{tabular}
		\end{center}

\begin{thm}\label{thm_F13}
	Let $f$ be a degree-based topological index such that  $V_1{>}0$, $V_5{>}0$ and $V_7{=}0$. A chemical graph $G$ maximizes $f$ over all graphs of the same order and size as $G$ if and only if $G\in F_1\cup F_3$.
\end{thm}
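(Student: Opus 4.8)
The plan is to avoid chasing transformations and instead derive a closed form for $f(G)$ valid on every chemical graph with no $(2,2)$-edge. First I would invoke Lemma \ref{lem:V1}: since $V_1>0$, any maximizer $G$ has $x_{22}=0$. With $x_{22}=0$ the five numbers $x_{ij}$ are tied together only by \eqref{n} and \eqref{m}, which I would solve to express $x_{13}$ and $x_{33}$ as affine functions of $x_{12}$ and $x_{23}$ for fixed $n,m$, namely $x_{13}=\tfrac{3n}{2}-m-\tfrac{5}{4}x_{12}-\tfrac14 x_{23}$ and $x_{33}=2m-\tfrac{3n}{2}+\tfrac14 x_{12}-\tfrac34 x_{23}$. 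Substituting into $f$ and collecting terms, the coefficient of $x_{23}$ works out to $-V_5/4$, the coefficient of $x_{12}$ to $V_7-V_5/4$, and the constant term to $c_{13}\tfrac{3n-2m}{2}+c_{33}\tfrac{4m-3n}{2}$, which is exactly $f$ on the even-$n$ member of $F_1$.

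Using the hypothesis $V_7=0$, both variable coefficients collapse to $-V_5/4$, so on every chemical graph with $x_{22}=0$ one obtains
$$f(G)=c_{13}\tfrac{3n-2m}{2}+c_{33}\tfrac{4m-3n}{2}-\tfrac{V_5}{4}\,(x_{12}+x_{23})=c_{13}\tfrac{3n-2m}{2}+c_{33}\tfrac{4m-3n}{2}-\tfrac{V_5}{2}\,n_2,$$
because $x_{12}+x_{23}=2n_2$ when $x_{22}=0$ by \eqref{eq_n2}. This identity is the heart of the argument: $V_7=0$ makes $f$ blind to how the degree-2 vertices split between $(1,2)$- and $(2,3)$-incidences, which is precisely why $F_1$ and $F_3$ appear together. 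Since $V_5>0$, maximizing $f$ is equivalent to minimizing $n_2$. A handshake/parity count gives $n_2\equiv n\pmod 2$, hence $n_2\ge n\mymod 2$, with equality attainable since $F_1$-graphs exist. Thus any maximizer has $x_{22}=0$ and $n_2=n\mymod 2$, and conversely every such graph attains the common value displayed above, giving both directions at once.

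It then remains to identify the graphs with $x_{22}=0$ and $n_2=n\mymod 2$. For even $n$ this forces $n_2=0$, hence $x_{12}=x_{23}=0$, and the $x_{ij}$ values coincide with the even row shared by $F_1$ and $F_3$. For odd $n$ we have $n_2=1$, so the unique degree-2 vertex sends its two edges to vertices of degree $1$ or $3$ (degree $2$ being excluded as $x_{22}=0$), giving $(x_{12},x_{23})\in\{(0,2),(1,1),(2,0)\}$. The first two are exactly the odd rows of $F_1$ and $F_3$; the third is impossible, since two $(1,2)$-edges at a single degree-2 vertex would form a \Path{3} component, contradicting connectivity for $n\ge 7$. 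This pins down $G\in F_1\cup F_3$.

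The main obstacle is the loss of Lemma \ref{lem:V2}: without $V_2>0$ we can no longer delete $(1,2)$-edges, so $x_{12}$ cannot be driven to zero directly and must be controlled indirectly. The closed-form identity resolves this cleanly, but its success hinges on the algebraic coincidence that the $x_{12}$- and $x_{23}$-coefficients differ by exactly $V_7$; verifying that coincidence, and then carrying out the short odd-$n$ enumeration together with the connectivity exclusion of $(x_{12},x_{23})=(2,0)$, are the steps that require care.
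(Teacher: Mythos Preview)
Your proof is correct and takes a genuinely different route from the paper's. The paper argues by cases on $x_{12}\in\{0,1,\ge 2\}$, applying $(n,m)$-preserving transforms $(A,k)$ (with $A=(-1,2,0,-3,2)$ or $(-2,3,0,-2,1)$, chosen so that $V_7=0$ makes the associated value $V_5-V_7$ or $V_5-2V_7$ collapse to a positive multiple of $V_5$) to push an arbitrary maximizer into $F_1$ and then quote Theorem~\ref{thm_F1}. Your closed-form identity
\[
f(G)=c_{13}\tfrac{3n-2m}{2}+c_{33}\tfrac{4m-3n}{2}-\tfrac{V_5}{2}\,n_2
\]
does the same work in one line and explains \emph{why} the two odd-$n$ rows of $F_1$ and $F_3$ tie: $V_7=0$ is exactly the condition that $f$ depends only on $x_{12}+x_{23}=2n_2$ and not on the split. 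The parity bound $n_2\ge n\bmod 2$ then replaces all the transform bookkeeping, and your odd-$n$ enumeration (including the $\Path{3}$-component exclusion of $(x_{12},x_{23})=(2,0)$) matches the description of $F_1\cup F_3$ given just before Theorem~\ref{thm_F13}. The paper's transform approach has the virtue of uniformity with the surrounding theorems; your argument is shorter and more transparent about the role of the hypothesis $V_7=0$.
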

\begin{proof}
Let $G$ be a chemical graph of order $n$, size $m$ and with $x_{ij}$ $(i,j)$-edges. Assume that it maximizes $f$ over all chemical graphs of order $n$ and size  $m$.    Note  that the two possibilities for the $x_{ij}$ values when $n$ is odd give the same value $f(G)$ since $c_{12}-c_{13}-c_{23}+c_{33}=V_7=0$.
As shown by Lemma \ref{lem:V1}, $V_1>0$ implies $x_{22}=0$. Moreover, $n>3$ implies $x_{12}\leq x_{23}$, and $x_{12}$ and $x_{23}$ have the same parity.
	\begin{itemize}[nosep]
		\item[1.] If $x_{12}=0$ then, as shown in Theorem \ref{thm_F1}, $G\in F_1$, else there is a graph $G'\in F_1$ so that $f(G')>f(G)$. 
		\item[2.] If $x_{12}=1$ then
		\begin{itemize}[nosep]
		\item if $x_{23}=1$ then Equations~\eqref{n} and \eqref{m} give $x_{13}=\frac{3n-2m-3}{2}$ and $x_{33}=\frac{4m-3n-1}{2}$, which implies that $G$ belongs to $F_3$.
		\item if $x_{23}\geq 3$, then consider the vector $A=(-1,2,0,-3,2)$ associated with $V_5{-}V_7$ and let $(x'_{12},x'_{13},x'_{22},x'_{23},x'_{33})$ be the $(A,1)$-transform of $(x_{12},x_{13},x_{22},x_{23},x_{33})$. Note that $x'_{12}=x'_{22}=0$ and $A$ is $(n,m)$-preserving. Hence, we have shown in case 1. that there is a graph $G'\in F_1$ which contradicts the maximality of $G$ since 
		\begin{center}$f(G')\geq f(x'_{12},x'_{13},x'_{22},x'_{23},x'_{33})=f(G)+V_5-V_7=f(G)+V_5>f(G).$\end{center}
		\end{itemize}
		\item[3.] If $x_{12}\geq 2$ then $x_{23}\geq x_{12}\geq 2$. Consider the vector $A=(-2,3,0,-2,1)$ associated with $V_5-2V_7$ and let $(x'_{12},x'_{13},x'_{22},x'_{23},x'_{33})$ be the $(A,\lfloor \frac{x_{12}}{2} \rfloor)$-transform of $(x_{12},x_{13},x_{22},x_{23},x_{33})$. Since $x'_{12}=x'_{22}=0$ and $A$ is $(n,m)$-preserving, we have shown in case 1. that there is a graph $G'\in F_1$ which contradicts the maximality of $G$ since 
		\vspace{-5pt}\begin{center}\hspace{0.5cm}$f(G'){\geq} f(x'_{12},x'_{13},x'_{22},x'_{23},x'_{33})=f(G)+\lfloor \frac{x_{12}}{2} \rfloor (V_5{-}2V_7)=f(G)+\lfloor \frac{x_{12}}{2} \rfloor V_5{>}f(G). \hfill\qedhere$\end{center}
	\end{itemize}
\end{proof}

\begin{thm}\label{thm_F5}
	Let $f$ be a degree-based topological index such that  $V_3{>}0$, $V_4{>}0$ and $V_6{=}0$. A chemical graph $G$ maximizes $f$ over all graphs of the same order and size as $G$ if and only if $G\in F_5$.
\end{thm}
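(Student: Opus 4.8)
The plan is to follow the template already established in Theorems~\ref{thm_F2} and \ref{thm_F4}, the only genuine novelty being that $V_6=0$ makes the transform vector $A=(0,0,1,-2,1)$ simultaneously $(n,m)$-preserving \emph{and} value-preserving for $f$. This is precisely what will force an entire one-parameter family of extremal graphs (indexed by $a=x_{22}$) rather than pinning down a single configuration, and it is the mechanism that creates $F_5$ out of the same raw constraints that gave $F_2$ and $F_4$.

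First I would invoke Lemmas~\ref{lem:V3} and \ref{lem:V4}: since $V_3>0$ and $V_4>0$, any maximizer $G$ is either \Path{n} or satisfies $n_1=0$. The case $m=n-1$ forces $G=\Path{n}$, giving $x_{12}=2$, $x_{22}=m-2$, so $G\in F_5$. For $m\geq n$ I would record $n_1=0$, hence $x_{12}=x_{13}=0$, note that Equation~\eqref{eq_n2} makes $x_{23}$ even, and obtain from \eqref{n} and \eqref{m} the single relation $x_{23}+2x_{33}=6(m-n)$. I would then dispatch the small sizes exactly as before: for $m=n$ the relation gives $x_{23}=x_{33}=0$, hence $x_{22}=m$ and $G\in F_5$; for $m=n+1$ it gives $n_3=2$, so condition~\eqref{cond1} forces $x_{33}\leq 1$, leaving the two configurations $(x_{22},x_{23},x_{33})=(m-6,6,0)$ and $(m-5,4,1)$, which have equal $f$-value because $V_6=0$, and both lie in $F_5$.

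For $n+1<m\leq\frac{3n-3}{2}$ I would parametrize by $a=x_{22}$, obtaining $x_{23}=6n-4m-2a$ and $x_{33}=5m-6n+a$ exactly as in the definition of $F_5$. The admissible range of $a$ is then fixed by realizability: $x_{22},x_{33}\geq 0$ give $a\geq\max\{0,6n-5m\}$, while the upper bound $a\leq 3n-2m-1$ comes from the fact that both $n_2$ and $n_3$ are strictly positive throughout this regime, so connectivity forbids $x_{23}=0$ (a graph with no $(2,3)$-edges would separate the degree-2 and degree-3 vertices into distinct components) and parity then upgrades this to $x_{23}\geq 2$.

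Finally, both directions of the equivalence rest on $V_6=0$. For the forward direction, the structural facts above show that any maximizer has its $x_{ij}$ in the $F_5$ parametrization with an admissible $a$, hence $G\in F_5$. For the converse, I would observe that, for a fixed pair $(n,m)$, any two configurations listed in $F_5$ are linked by iterated $(A,1)$-transforms with $A=(0,0,1,-2,1)$; since this $A$ is $(n,m)$-preserving and $V_6=0$, every member of $F_5$ of a given order and size shares the same $f$-value, so once one of them is a maximizer (guaranteed by the forward direction together with the assumed existence of realizing graphs) they all are. The main obstacle I anticipate is bookkeeping rather than conceptual: getting the endpoints of the range of $a$ exactly right---in particular justifying $a\leq 3n-2m-1$ via connectivity and parity rather than the naive $a\leq 3n-2m$---and checking that the boundary sizes $m=n$ and $m=n+1$ mesh cleanly with the general parametric row.
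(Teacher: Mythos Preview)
Your proposal is correct and follows essentially the same approach as the paper's proof: invoke Lemmas~\ref{lem:V3} and \ref{lem:V4} to kill $n_1$ (or reduce to $\Path{n}$), dispatch $m=n-1,n,n+1$ individually, and for $n+1<m\le\frac{3n-3}{2}$ parametrize by $a=x_{22}$ using Equations~\eqref{n} and \eqref{m}, with $V_6=0$ making all admissible $a$ yield the same $f$-value. Your write-up is in fact slightly more explicit than the paper's, which simply asserts $x_{23}\ge 2$ in the last case; your connectivity-plus-parity justification (both $n_2$ and $n_3$ positive, so $x_{23}=0$ would disconnect $G$) is the right reason and fills that small gap.
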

\begin{proof}
Let $G$ be a chemical graph of order $n$, size $m$ and with $x_{ij}$ $(i,j)$-edges. Assume that it maximizes $f$ over all chemical graphs of order $n$ and size  $m$.    Note first that the two possibilities for the $x_{ij}$ values when $m=n+1$ have the same value $f(G)$ since $c_{22}-2c_{23}+c_{33}=V_6=0$. For the same reason, all solutions for
$n+1< m \leq \frac{3n-3}{2}$ have the same value.

 As shown by Lemmas \ref{lem:V3} and \ref{lem:V4}, $V_3>0$ and $V_4>0$ imply $G=\Path{n}$ or $n_1=0$. Hence, if $m=n-1$, then $G=\Path{n}$, which means that $x_{12}=2$, $x_{22}=m-2$ and $G\in F_5$. 
	So assume $n\leq m<\frac{3n-3}{2}$, which implies that $n_1=0$ and $x_{23}$ is even.
	\begin{itemize}[nosep]
		\item If $m=n$, Equations~\eqref{n} and \eqref{m} give $x_{23}=x_{33}=0$ and $x_{22}=m$, which implies $G\in F_5$.
		\item If $m=n+1$, Equations~\eqref{n} and \eqref{m} give $x_{23}+2x_{33}=6$, which implies $n_3=2$ and $x_{33}\leq 1$. Hence, there are only two possibilities:
		\begin{itemize}[nosep]
			\item $x_{22}=m-6$, $x_{23}=6$,$x_{33}=0$, or
			\item $x_{22}=m-5$, $x_{23}=4$,$x_{33}=1$,
		\end{itemize}	
		and both imply $G\in F_5$.
		\item If $n+1{<} m{\leq}\frac{3n-3}{2}$, then $x_{23}\geq 2$ and Equations~\eqref{n} and \eqref{m} give $x_{22}=a$, $x_{23}=6n-4m-2a$ and $x_{33}=5m-6n+a$. Since $x_{33}\geq 0$ and $x_{23}\geq 2$ , we have $ \max\{0,6n-5m\}\leq a\leq 3n-2m-1$, which implies $G\in F_5$.\qedhere
	\end{itemize}
\end{proof}

\begin{cor}\label{cor_F135}
	$F_1\cup F_3\cup F_5$ is the set of extremal chemical graphs for the topological indices Forgotten, InvDeg, Zagreb1, lnZagreb2, lnZagreb3 and mZagreb.
\end{cor}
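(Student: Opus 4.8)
The plan is to follow exactly the pattern of Corollaries~\ref{cor_F12} and~\ref{cor_F34}: a chemical graph is extremal for $f$ precisely when it maximizes $f$ or its complement $\overline f$, so it suffices to identify, for each of the six indices, the family of maximizers of $f$ and the family of maximizers of $\overline f$. I will show that $f$ always satisfies the hypotheses $V_1>0$, $V_5>0$, $V_7=0$ of Theorem~\ref{thm_F13}, so that the chemical graphs maximizing $f$ are exactly those of $F_1\cup F_3$, and that $\overline f$ always satisfies the hypotheses $V_3>0$, $V_4>0$, $V_6=0$ of Theorem~\ref{thm_F5}, so that the chemical graphs maximizing $\overline f$ (equivalently, minimizing $f$) are exactly those of $F_5$. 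Since both theorems are stated as equivalences, for each admissible pair $(n,m)$ the maximizers of $f$ are precisely the $F_1\cup F_3$ graphs of that order and size and the minimizers are precisely the $F_5$ graphs; taking the union over the two directions and over the six indices yields $F_1\cup F_3\cup F_5$ and leaves no other extremal graph.

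Before computing I would record two bookkeeping identities that halve the work. Replacing every $c_{ij}$ by $-c_{ij}$ turns $V_3$ into $V_1$ and turns $V_6$ into $-V_6$; hence the complement value $\overline{V_3}$ equals the original $V_1$, and $\overline{V_6}=0$ is equivalent to $V_6=0$. Consequently, for the direction toward $F_5$ I need not recompute $V_3$ for $\overline f$ (it is positive as soon as $V_1>0$), and I only need the single extra inequality $\overline{V_4}=-2c_{22}+c_{12}+c_{23}+2\min_{i}(c_{3i}-c_{2i})>0$. The whole corollary thus reduces to checking, for each index, the two equalities $V_6=0$, $V_7=0$ and the three inequalities $V_1>0$, $V_5>0$, $\overline{V_4}>0$.

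I would then substitute the coefficients of Table~\ref{tab_inv} for the six indices in turn. For Zagreb1, Forgotten, InvDeg and mZagreb the $c_{ij}$ are rational, and the combinations $V_6=c_{22}-2c_{23}+c_{33}$ and $V_7=c_{12}-c_{13}-c_{23}+c_{33}$ collapse to $0$ after putting everything over a common denominator, while $V_1$, $V_5$ and $\overline{V_4}$ come out as explicit positive rationals. For lnZagreb2 and lnZagreb3 the coefficients are linear combinations of $\ln 2$ and $\ln 3$, so the vanishing of $V_6$ and $V_7$ is read off by collecting the $\ln 2$ and $\ln 3$ terms separately, and the three inequalities follow from $\ln 2,\ln 3>0$. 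Applying Theorem~\ref{thm_F13} to $f$ and Theorem~\ref{thm_F5} to $\overline f$ for each index then completes the argument.

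The only genuinely delicate point is the pair of equalities $V_6=0$ and $V_7=0$: these are exactly what place each of the six indices on the \emph{boundary} handled by Theorems~\ref{thm_F13} and~\ref{thm_F5}, rather than strictly on one side of it (which would instead invoke Theorems~\ref{thm_F1}/\ref{thm_F3} or \ref{thm_F2}/\ref{thm_F4} and give a different family). I expect the logarithmic indices to be the most error-prone, since there the cancellation relies on the $\tfrac{\ln i}{i}$ and $\ln(ij)$ terms combining exactly; the remaining verifications are routine arithmetic.
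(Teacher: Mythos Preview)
Your overall architecture is exactly the paper's: verify the sign conditions for each index and invoke Theorems~\ref{thm_F13} and~\ref{thm_F5}. The bookkeeping identity $\overline{V_3}=V_1$ and your formula for $\overline{V_4}$ are correct and do save work.

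However, there is a genuine error in your uniformity assumption. You claim that each of the six indices $f$ satisfies $V_1>0$, $V_5>0$, $V_7=0$, so that $F_1\cup F_3$ always gives the \emph{maximizers} of $f$. This fails for lnZagreb2. With $c_{ij}=2\bigl(\tfrac{\ln i}{i}+\tfrac{\ln j}{j}\bigr)$ one gets $c_{3i}-c_{2i}=\tfrac{2\ln 3}{3}-\ln 2$ for every $i$, and hence
\[
V_1 \;=\; (c_{13}-c_{22})+2\Bigl(\tfrac{2\ln 3}{3}-\ln 2\Bigr)\;=\;2\ln 3-4\ln 2\;=\;\ln\tfrac{9}{16}\;<\;0,
\qquad
V_5 \;=\; 2\ln 3-4\ln 2\;<\;0.
\]
So neither of your ``three inequalities'' holds for lnZagreb2, and your claim that they ``follow from $\ln 2,\ln 3>0$'' is simply false here: the coefficient of $\ln 2$ is negative. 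The paper handles this by swapping the roles: it applies Theorem~\ref{thm_F13} to $\overline{\text{lnZagreb2}}$ (for which $V_1,V_5>0$) and Theorem~\ref{thm_F5} to lnZagreb2 itself, so that $F_1\cup F_3$ gives the \emph{minimizers} and $F_5$ the \emph{maximizers} of lnZagreb2. Your plan must make this exception explicit; as written, the verification step for lnZagreb2 would fail and the argument would not close.
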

\begin{proof}
	It is easy to check that 
	\begin{itemize} [nosep]
		\item $V_1{>}0$, $V_5{>}0$ and $V_7{=}0$ for Forgotten, InvDeg, Zagreb1,  \textoverline{lnZagreb2}, lnZagreb3 and mZagreb which implies that $F_1\cup F_3$ is the set of chemical graphs which maximize Forgotten, InvDeg, Zagreb1 and mZagreb and minimize lnZagreb2.
		\item $V_3{>}0,V_4{>}0$ and $V_6{=}0$ for \textoverline{Forgotten}, \textoverline{InvDeg}, \textoverline{Zagreb1}, lnZagreb2, \textoverline{lnZagreb3} and \textoverline{mZagreb},  which implies that $F_5$ is the set of chemical graphs which minimize Forgotten, InvDeg, Zagreb1, mZagreb and maximize lnZagreb2.\qedhere
	\end{itemize}
\end{proof}

\subsection{Additional families of extremal chemical graphs}

As proved in the previous section, the five families $F_1,\ldots,F_5$ are sufficient to characterize all extremal graphs of 29 topological indices. However, some degree-based topological  indices have extremal chemical graphs that do not belong to any of the five families. We give here four examples. More precisely, we characterize the extremal chemical graphs of the  topological indices ABC, Albertson, Sigma, and aZagreb. For this purpose, we define new families of chemical graphs characterized by $x_{ij}$ values.
Here again, as explained in Section \ref{sec_4families}, it is easy to check that given $x_{ij}$ values of one of the graph families, there is at least one chemical graph with $x_{ij}$ $(i,j)$-edges.

\begin{defn}\label{familyF6}
		$F_6$ is the set of chemical graphs with the following numbers $x_{ij}$ of $(i,j)$-edges:

\vspace{-10pt}		\begin{center}
			\setlength{\extrarowheight}{0pt}			\begin{tabular}{| c | c | c | c |c | l}\cline{1-5} 
				$x_{12}$ & $x_{13}$ & $x_{22}$& $x_{23}$& $x_{33}$&\\ \cline{1-5}
			0 & $a$ & 0 & $6n-4m-4a$ &$5m-6n+3a$& \\ \cline{1-5}				
			\end{tabular}
		\end{center}
\vspace{-7pt}\noindent where $a$ is any integer such that $\max\{0,\lceil \frac{6n-5m}{3}\rceil\}\leq a\leq \lfloor \frac{3n-2m}{2}\rfloor$.
\end{defn}

\begin{thm}\label{thm_F6}
Let $f$ be a degree-based topological index such that $V_1{>}0$, $V_2{>}0$ and $V_5{=}0$. A chemical graph $G$ maximizes $f$ over all graphs of the same order and size as $G$ if and only if $G\in F_6$.
\end{thm}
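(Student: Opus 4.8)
The plan is to follow the same skeleton as the proof of Theorem~\ref{thm_F1}, the difference being that the hypothesis $V_5=0$ (instead of $V_5>0$) is exactly what promotes a single extremal graph to a whole one-parameter family.

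First I would appeal to Lemma~\ref{lem:V2}: since $V_1>0$ and $V_2>0$, any chemical graph $G$ maximizing $f$ has $x_{12}=x_{22}=0$. Setting these two quantities to zero in Equations~\eqref{n} and \eqref{m} leaves the linear system $x_{13}+x_{23}+x_{33}=m$ and $\tfrac{4}{3}x_{13}+\tfrac{5}{6}x_{23}+\tfrac{2}{3}x_{33}=n$. Writing $a=x_{13}$ and solving gives $x_{23}=6n-4m-4a$ and $x_{33}=5m-6n+3a$. Imposing $a\geq 0$, $x_{23}\geq 0$ and $x_{33}\geq 0$ confines $a$ to precisely the interval $\max\{0,\lceil\tfrac{6n-5m}{3}\rceil\}\leq a\leq\lfloor\tfrac{3n-2m}{2}\rfloor$ defining $F_6$. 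This settles the ``only if'' direction: every maximizer belongs to $F_6$.

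For the ``if'' direction I would evaluate $f$ on $F_6$. Because $x_{12}=x_{22}=0$, we have $f(G)=c_{13}a+c_{23}(6n-4m-4a)+c_{33}(5m-6n+3a)$, which regroups as $f(G)=a\,(c_{13}-4c_{23}+3c_{33})+c_{23}(6n-4m)+c_{33}(5m-6n)=a\,V_5+\kappa$, where $\kappa$ depends only on $n$ and $m$. The crucial observation is that $V_5=0$, so the $a$-coefficient vanishes and $f$ takes the same value on every member of $F_6$. A maximizer exists (there are finitely many chemical graphs of fixed order and size), it lies in $F_6$ by the previous paragraph, and all of $F_6$ shares that common value; hence every graph of $F_6$ is a maximizer, which gives the equivalence.

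The proof is short, and I do not expect a genuine obstacle. The only conceptual point worth highlighting is that $V_5=0$ is exactly the degeneracy making $f$ constant along the family indexed by $a=x_{13}$: the $(A,k)$-transform with $A=(0,1,0,-4,3)$ that strictly increased $f$ in Theorem~\ref{thm_F1} now only moves between equal-value graphs, so no pruning step is available or necessary. Everything else is the routine linear algebra of \eqref{n}, \eqref{m} and the nonnegativity bounds on $x_{23}$ and $x_{33}$.
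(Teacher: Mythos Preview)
Your proof is correct and follows essentially the same approach as the paper: invoke Lemma~\ref{lem:V2} to force $x_{12}=x_{22}=0$, solve \eqref{n}--\eqref{m} for $x_{23},x_{33}$ in terms of $a=x_{13}$, read off the admissible range of $a$, and observe that $V_5=0$ makes $f$ constant across all such solutions. The paper's version is terser but the argument is the same.
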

\begin{proof}
Let $G$ be a chemical graph of order $n$, size $m$ and with $x_{ij}$ $(i,j)$-edges. Assume that it maximizes $f$ over all chemical graphs of order $n$ and size  $m$.     As shown in Lemma \ref{lem:V2}, $V_1>0$ and $V_2>0$ imply $x_{12}=x_{22}=0$. Hence, it follows from Equations~\eqref{n} and \eqref{m} that $x_{13}=a$, $x_{23}=6n-4m-4a$ and $x_{33}=5m-6n+3a$. Since $x_{23}\geq 0$ and $x_{33}\geq 0$, we have 
	$\max\{0,\lceil \frac{6n-5m}{3}\rceil\}\leq a\leq \lfloor \frac{3n-2m}{2}\rfloor$. All possible solutions with the various values of $a$ have the same value since $c_{13}-4c_{23}+3c_{33}=V_5=0$.
	\end{proof}

\begin{defn}\label{familyF7}
		$F_7$ is the set of chemical graphs with the following numbers $x_{ij}$ of $(i,j)$-edges:
\begin{center}
			\setlength{\extrarowheight}{0pt}				\begin{tabular}{c| c | c | c | c |c | l}\cline{2-6} 
	&	$x_{12}$ & $x_{13}$ & $x_{22}$& $x_{23}$& $x_{33}$&\\ \cline{2-6}
		&2 & 0 & $m-2$ & 0 &0& if $m=n-1$ \\ \cline{2-6}
	&	0 & 0 & $m$ & 0 &0& if $m=n$ \\ \cline{2-6}
	&	0 & 0 & $m-5$ & 4 &1& \multirow{3}{*}{if $m=n+1$}\\
	&	1 & 0 & $m-7$ & 3 &3& \\ 
	$(n\geq 8)$&	2 & 0 & $m-9$ & 2 &5& \\ \cline{2-6}
	&	0 &0 &$3n-2m-1$ & 2 &$3m-3n-1$& \multirow{2}{*}{if $n+1<m\leq\frac{3n-3}{2}$}\\ 
	&	1 &0 &$3n-2m-3$ & 1 &$3m-3n+1$&\\ \cline{2-6}
		\end{tabular}
\end{center}
\end{defn}

\begin{thm}\label{thm_F7}
	Let $f$ be a degree-based topological index such that $V_3{>}0$, $V_6{>}0$ and $V_5{+}V_7{=}2V_6$. A chemical graph $G$ maximizes $f$ over all graphs of the same order and size as $G$ if and only if $G\in F_7$.
\end{thm}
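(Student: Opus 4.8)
The plan is to reduce the maximization of $f$ to a purely combinatorial minimization, and then to read off the minimizers from the existence conditions \eqref{cond1}--\eqref{cond7}. First I would invoke Lemma \ref{lem:V3}: since $V_3>0$, a maximizing chemical graph $G$ has no $(1,3)$-edge, so $x_{13}=0$. With $x_{13}=0$, Equations \eqref{n} and \eqref{m} express two of the remaining $x_{ij}$ in terms of the other two; solving for $x_{22}$ and $x_{33}$ gives
$$x_{22}=3n-2m-\tfrac{5x_{12}+x_{23}}{2},\qquad x_{33}=3(m-n)+\tfrac{3x_{12}-x_{23}}{2}.$$
Substituting these into $f=c_{12}x_{12}+c_{22}x_{22}+c_{23}x_{23}+c_{33}x_{33}$ and collecting terms, the coefficient of $x_{23}$ becomes $c_{23}-\tfrac12 c_{22}-\tfrac12 c_{33}=-\tfrac12 V_6$, while the coefficient of $x_{12}$ is $c_{12}-\tfrac52 c_{22}+\tfrac32 c_{33}$. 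The hypothesis $V_5+V_7=2V_6$ is exactly the identity $c_{12}=2c_{22}+c_{23}-2c_{33}$, and plugging this in turns the coefficient of $x_{12}$ into $-\tfrac12 V_6$ as well. Hence
$$f(G)=\big[(3n-2m)c_{22}+3(m-n)c_{33}\big]-\tfrac12 V_6\,(x_{12}+x_{23}),$$
a quantity that, for fixed $(n,m)$, depends only on $x_{12}+x_{23}$. Since $V_6>0$, maximizing $f$ is equivalent to minimizing $x_{12}+x_{23}$.

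The second step is to determine this minimum and all configurations attaining it. I would use non-negativity of $x_{22},x_{33}$, the integrality constraint forcing $x_{12}+x_{23}$ to be even, and the connectivity conditions \eqref{cond1}, \eqref{cond4} and \eqref{cond6}. The crucial lever is \eqref{cond6}: writing $n_2=x_{22}+\tfrac{x_{12}+x_{23}}{2}$ turns $x_{22}+x_{23}\ge n_2+\delta(n_3)-1$ into $x_{23}\ge x_{12}+2\delta(n_3)-2$, so whenever $n_3>0$ one has $x_{23}\ge x_{12}$ and therefore $x_{12}+x_{23}\ge 2x_{12}$. Combining this with $n_3=x_{12}+2(m-n)$ (read off from \eqref{eq_n3}) controls how large $x_{12}$ can be at the minimum.

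I would then split into the four ranges appearing in $F_7$. For $m=n-1$ the graph is a tree; as $x_{13}=x_{11}=0$, every leaf is the endpoint of a distinct $(1,2)$-edge, so $x_{12}=n_1\ge 2$, and the minimum $x_{12}+x_{23}=2$ is attained only by the path $\Path{n}$, i.e. $(2,0,m-2,0,0)$. For $m=n$ the value $0$ is attainable, forcing the cycle $(0,0,m,0,0)$. For $m=n+1$ one has $n_3=x_{12}+2>0$; combining $x_{23}\ge x_{12}$ from \eqref{cond6} with the cap on $x_{33}$ from \eqref{cond1} forces $x_{23}\ge 4,3$ for $x_{12}=0,1$ and $x_{23}\ge 2$ for $x_{12}=2$, giving minimum $4$, attained exactly at the rows $(0,0,m-5,4,1)$, $(1,0,m-7,3,3)$ and $(2,0,m-9,2,5)$, the last requiring $x_{22}=n-8\ge 0$, i.e. $n\ge 8$; cases $x_{12}\ge 3$ are excluded since then $x_{12}+x_{23}\ge 6$. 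For $n+1<m\le\frac{3n-3}{2}$ one has $n_2,n_3>0$, so \eqref{cond4} and parity give $x_{23}\ge 2$ when $x_{12}=0$ and \eqref{cond6} with parity gives $x_{23}\ge 1$ when $x_{12}=1$, yielding minimum $2$ attained exactly by $(0,0,3n-2m-1,2,3m-3n-1)$ and $(1,0,3n-2m-3,1,3m-3n+1)$, while $x_{12}\ge 2$ again forces $x_{12}+x_{23}\ge 4$.

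Both directions of the equivalence then follow at once. These minimizing $x_{ij}$-tuples are precisely the rows of $F_7$; all rows of a given $(n,m)$ share the same value of $x_{12}+x_{23}$, hence the same $f$ by the displayed formula; and the existence discussion of Section \ref{sec_4families} guarantees each is realized by a chemical graph. Thus $G$ maximizes $f$ if and only if its $x_{ij}$ values minimize $x_{12}+x_{23}$, that is, if and only if $G\in F_7$. I expect the main obstacle to be this last step: verifying range by range that \eqref{cond1}, \eqref{cond4} and \eqref{cond6} together with parity rule out every tuple with smaller $x_{12}+x_{23}$ than the listed ones — in particular that $x_{12}\ge 3$ is always infeasible at the minimum — since this is where the combinatorics is genuinely case-dependent.
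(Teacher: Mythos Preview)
Your proof is correct and follows a genuinely different route from the paper's. The paper never writes down the closed form $f(G)=(3n-2m)c_{22}+3(m-n)c_{33}-\tfrac12 V_6(x_{12}+x_{23})$; instead it works operationally, introducing the notion of a graph being ``$(a,b)$-dominated'' and applying two $(n,m)$-preserving transformations (associated with $V_6$ and with $3V_6-V_5-V_7=V_6$) to push any non-$F_7$ configuration to a strictly better one, case by case in $m$. Your reduction to ``minimize $x_{12}+x_{23}$'' is cleaner and makes it transparent why all rows of $F_7$ with a given $(n,m)$ share the same $f$-value; it also explains structurally why the constraint $V_5+V_7=2V_6$ is exactly what is needed (it equalizes the coefficients of $x_{12}$ and $x_{23}$). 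The paper's approach, by contrast, stays within the transformation framework used throughout Section~\ref{secFamilies}, so it requires no new ingredients and makes the analogy with Theorems~\ref{thm_F2} and~\ref{thm_F8} visible. One small remark: the existence of the $F_7$ rows is asserted at the beginning of Section~5.2 rather than in Section~\ref{sec_4families} proper, but the verification method you cite is indeed the one described there.
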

\begin{proof}
Let $G$ be a chemical graph of order $n$, size $m$ and with $x_{ij}$ $(i,j)$-edges. Assume that it maximizes $f$ over all chemical graphs of order $n$ and size  $m$.    
Note that the three possible cases for $m=n+1$ have the same value since $c_{12}-2c_{22}-c_{23}+2c_{33}=V_5+V_7-2V_6=0$. For the same reason, the two possibilities for $n+1<m<\frac{3n-3}{3}$ have the same value.

As shown by Lemma \ref{lem:V3}, $V_3>0$ implies $x_{13}=0$. 
In what follows, for two integer $a$ and $b$ such that $(a,b)\neq (x_{12},x_{23})$, we say that $G$ is $(a,b)$-dominated if $a\leq x_{12}$,  $b{-}a\leq x_{23}-x_{12}$, and there is a chemical graph of order $n$ and size $m$ which has $a$ $(1,2)$-edges, $b$ $(2,3)$-edges, and no $(1,3)$-edge. In such a case
consider the $(n,m)$-preserving vector $A=(0,0,1,-2,1)$ associated with $V_6$ and let $(x'_{12},x'_{13},x'_{22},x'_{23},x'_{33})$ be the $(A,\frac{x_{23}-x_{12}+a-b}{2})$-transform of $(x_{12},x_{13},x_{22},x_{23},x_{33})$. We thus have $x_{12}=x'_{12}$ and $x'_{23}=x_{12}+b-a$. Let $A'=(-1,0,3,-1,-1)$ be the $(n,m)$-preserving vector associated with $3V_6-V_5-V_7$ and let $(x''_{12},x''_{13},x''_{22},x''_{23},x''_{33})$ be the $(A',x_{12}-a)$-transform of $(x'_{12},x'_{13},x'_{22},x'_{23},x'_{33})$. We now have $x''_{12}=a$ and $x''_{23}=b$. Let $G'$ be a graph with $x''_{ij}$ $(i,j)$-edges. Note that if $x_{12}=a$, then $x_{23}-x_{12}+a-b>0$. Hence, $G'$ contradicts the maximality of $G$ since $3V_6-V_5-V_7=V_6>0$ and 
\vspace{-0.2cm}\begin{eqnarray*}
	f(G')&=&f(x''_{12},x''_{13},x''_{22},x''_{23},x''_{33})=f(x'_{12},x'_{13},x'_{22},x'_{23},x'_{33})+(x_{12}-a)V_6\\[-1
	pt]
	&=&f(G)+(\frac{x_{23}-x_{12}+a-b}{2})V_6+(x_{12}-a)V_6>f(G),
\end{eqnarray*}	

\vspace{-0.3cm}Let us now analyze the possible values for $m$ and use Equations~\eqref{n} and \eqref{m} to derive $x_{ij}$ values.
\begin{itemize}[nosep]
	\item If $m=n-1$, then assume $x_{23}> 0$. It follows that $3\leq x_{12}\leq x_{23}$. Hence, $x_{12}=x_{23}=3$, else $G$ is $(3,3)$-dominated. We therefore have $x_{22}=m-6$ and $x_{33}=0$ and $f(\Path{n})-f(G)=-c_{12}+4c_{22}-3c_{23}=4V_6-V_5-V_7=2V_6>0$, which contradicts the maximality of $G$. Hence $x_{23}=0$ which implies $G=\Path{n}\in F_7$.
\item 	if $m=n$, then $x_{12}=x_{23}=0$, else $G$ is $(0,0)$-dominated. We therefore have $x_{22}=m$ and $x_{33}=0$ which implies $G=\C{n}\in F_7$.
\item If $m=n+1$, then
\begin{itemize}
	\item if $x_{12}=0$, then $x_{23}\geq 4$. Hence, $x_{23}=4$ else $G$ is $(0,4)$-dominated.  We therefore have $x_{22}=m-5$ and $x_{33}=1$, meaning that $G\in F_7$.
	\item if $x_{12}=1$, then $x_{23}\geq 3$. Hence, $x_{23}=3$ else $G$ is $(1,3)$-dominated.  We therefore have $x_{22}=m-7$ and $x_{33}=3$, meaning that $G\in F_7$.
	\item if $x_{12}=2$, then $x_{23}\geq 2$. Hence, $x_{23}=2$ else $G$ is $(2,2)$-dominated.  We therefore have $x_{22}=m-9$ and $x_{33}=5$, meaning that $G\in F_7$.
\item if $x_{12}>2$, then $x_{23}>2$, which means that $G$ is $(2,2)$-dominated, which contradicts the maximality of $G$.
\end{itemize}
	\item If $n+1<m\leq\frac{3n-3}{2}$ then
	\begin{itemize}[nosep]
\item if $x_{12}=0$, then $x_{23}\geq 2$. Hence, $x_{23}=2$, else $G$ is $(0,2)$-dominated. We therefore have $x_{22}=3n-2m-1$ and $x_{33}=3m-3n-1$, meaning that $G\in F_7$.
\item if $x_{12}=1$, then $x_{23}\geq 1$. Hence, $x_{23}=1$, else $G$ is $(1,1)$-dominated. We therefore have $x_{22}=3n-2m-3$ and $x_{33}=3m-3n+1$, meaning that $G\in F_7$.
\item if $x_{12}\geq 2$, then $x_{23}\geq 2$. Hence, $G$ is $(1,1)$-dominated, which contradicts the maximality of $G$.\qedhere
\end{itemize}
\end{itemize}
\end{proof}

\begin{cor}\label{cor_F67}
	$F_6\cup F_7$ is the set of extremal chemical graphs for the topological index Sigma.
\end{cor}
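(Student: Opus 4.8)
The plan is to reduce everything to the two characterization results already established, namely Theorem~\ref{thm_F6} and Theorem~\ref{thm_F7}, by simply evaluating the relevant quantities $V_1,\dots,V_8$ for the Sigma index and for its complement. Recall that Sigma is the degree-based topological index with $c_{ij}=(i-j)^2$, so that $c_{12}=1$, $c_{13}=4$, $c_{22}=0$, $c_{23}=1$ and $c_{33}=0$; its complement \textoverline{Sigma} has the negated coefficients $c_{12}=-1$, $c_{13}=-4$, $c_{22}=0$, $c_{23}=-1$, $c_{33}=0$.

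First I would treat the maximization of Sigma. Evaluating the minima appearing in $V_1$ and $V_2$ (each attained at the degree-$3$ term $c_{33}-c_{23}=-1$), one obtains $V_1=2>0$ and $V_2=2>0$, while $V_5=c_{13}-4c_{23}+3c_{33}=4-4=0$. These are exactly the hypotheses $V_1>0$, $V_2>0$, $V_5=0$ of Theorem~\ref{thm_F6}, so that theorem immediately yields that the chemical graphs maximizing Sigma are precisely those in $F_6$.

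Next I would treat the minimization of Sigma, that is, the maximization of \textoverline{Sigma}. Computing the minima entering $V_3$ and recalling $V_6=c_{22}-2c_{23}+c_{33}$, one finds for \textoverline{Sigma} that $V_3=2>0$ and $V_6=2>0$, and moreover $V_5=c_{13}-4c_{23}+3c_{33}=-4+4=0$ together with $V_7=c_{12}-c_{13}-c_{23}+c_{33}=4$, so that $V_5+V_7=4=2V_6$. These match the hypotheses $V_3>0$, $V_6>0$ and $V_5+V_7=2V_6$ of Theorem~\ref{thm_F7}, whence the chemical graphs maximizing \textoverline{Sigma}, equivalently minimizing Sigma, are precisely those in $F_7$.

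Combining the two conclusions, the extremal chemical graphs for Sigma form exactly $F_6\cup F_7$. The only real ``work'' here is the arithmetic of the several min-over-degree terms in $V_1$, $V_2$ and $V_3$; I expect no genuine obstacle, the sole point requiring care being the verification of the linear relation $V_5+V_7=2V_6$ for the complement, which is precisely what singles out $F_7$ (rather than, say, $F_4$ or $F_5$) as the minimizing family.
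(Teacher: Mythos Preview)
Your proposal is correct and follows exactly the paper's approach: verify that Sigma satisfies the hypotheses of Theorem~\ref{thm_F6} ($V_1,V_2>0$, $V_5=0$) and that \textoverline{Sigma} satisfies those of Theorem~\ref{thm_F7} ($V_3,V_6>0$, $V_5+V_7=2V_6$), then conclude. One tiny slip: for $V_2$ the relevant minimum $\min_{i=2,3}(c_{2i}-c_{3i})$ is attained at $i=2$ (namely $c_{22}-c_{32}=-1$), not at the term $c_{33}-c_{23}$; the numerical value $-1$ is nonetheless correct, so your computation $V_2=2$ stands.
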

\begin{proof}
	It is easy to check that $V_1>0$, $V_2>0$ and $V_5=0$ for Sigma, while $V_3>0$, $V_6>0$ and $V_5+V_7=2V_6$ for \textoverline{Sigma}, which means that $F_6$ is the set of chemical graphs which maximize Sigma, while $F_7$ is the set of chemical graphs which minimize Sigma.
\end{proof}

\begin{defn}\label{familyF8}
		$F_8$ is the set of chemical graphs with the following numbers $x_{ij}$ of $(i,j)$-edges:

\begin{center}
			\setlength{\extrarowheight}{1pt}
		\begin{tabular}{| c | c | c | c |c | l}\cline{1-5} 
			$x_{12}$ & $x_{13}$ & $x_{22}$& $x_{23}$& $x_{33}$&\\ \cline{1-5}
			2 & 0 & $m-2$ & 0 &0& \multirow{2}{*}{if $m+1=n\in\{7,8,9\}$} \\ 
			3 & 0 & $m-6$ & 3 &0& \\ \cline{1-5}
2 & 0 & $m-7$ & 4 &1& if $m=n\in\{7,8\}$  \\ \cline{1-5}
1 & 0 & 1 & 3 &3& if $n=7$ and $m=8$  \\ \cline{1-5}						$\lfloor\frac{3n-2m}{3}\rfloor$ &0 & $m \mymod 3$ & $\lfloor\frac{3n-2m}{3}\rfloor$ &$\lfloor\frac{7m-6n}{3}\rfloor$&otherwise\\ \cline{1-5}
		\end{tabular}
	\end{center}
\end{defn}

\begin{thm}\label{thm_F8}
	Let $f$ be a degree-based topological index such that $V_3{>}0$, $V_6{>}0$ and $V_5{+}V_7{=}4V_6$. A chemical graph $G$ maximizes $f$ over all graphs of the same order and size as $G$ if and only if $G\in F_8$.
\end{thm}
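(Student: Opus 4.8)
The hypotheses $V_3>0$ and $V_6>0$ coincide with those of Theorem~\ref{thm_F7}; only the linear identity changes, from $V_5+V_7=2V_6$ to $V_5+V_7=4V_6$. I would therefore follow the proof of Theorem~\ref{thm_F7}, tracking the sign reversal this new identity produces. As there, Lemma~\ref{lem:V3} applied to $V_3>0$ gives $x_{13}=0$, so $n_1=x_{12}$ and every edge is of type $(1,2),(2,2),(2,3)$ or $(3,3)$. Eliminating $x_{22}$ and $x_{33}$ with Equations~\eqref{n} and \eqref{m} leaves the single constraint $5x_{12}+2x_{22}+x_{23}=6n-4m$ and writes $f$ as the affine function $f=(V_5+V_7)x_{12}+V_6x_{22}+C$ with $C=C(n,m)$. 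Substituting $V_5+V_7=4V_6$ together with the constraint turns this into $f=\tfrac{V_6}{2}(3x_{12}-x_{23})+C'$; since $V_6>0$, maximising $f$ amounts to maximising $3x_{12}-x_{23}$. (In the tuples below I write $(x_{12},x_{22},x_{23},x_{33})$, the entry $x_{13}=0$ being understood.)

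Two $(n,m)$-preserving moves each raise $3x_{12}-x_{23}$ by $2$ and raise $f$ by $V_6>0$: the vector $A=(0,0,1,-2,1)$ of value $V_6$, which lowers $x_{23}$ by $2$, and the vector $-A'=(1,0,-3,1,1)$, whose value is $-(3V_6-V_5-V_7)=V_6$ once $V_5+V_7=4V_6$ is used, and which lowers $x_{22}$ by $3$. This is exactly where the argument of Theorem~\ref{thm_F7} flips: there $A'=(-1,0,3,-1,-1)$ was value-increasing and drove $x_{12}$ down, whereas here its negative is value-increasing and drives $x_{12}$ up. In an optimal $G$, neither move may reach a realizable graph of the same order and size. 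Since $-A'$ needs $x_{22}\ge 3$, blocking it gives $x_{22}\le 2$. The binding feasibility constraint is \eqref{cond6}, which for $n_3\ge 1$ simplifies to $x_{23}\ge x_{12}$; combined with the parity $x_{12}\equiv x_{23}\pmod 2$ forced by integrality of $x_{22}$ and $x_{33}$, blocking $A$ (whose target needs $x_{23}-2\ge x_{12}$) gives $x_{23}=x_{12}$. Feeding $x_{22}\le 2$ and $x_{23}=x_{12}$ back into $6x_{12}+2x_{22}=6n-4m$ pins $x_{22}=m\bmod 3$ and $x_{12}=x_{23}=\lfloor\frac{3n-2m}{3}\rfloor$, the \emph{otherwise} row of $F_8$.

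The remaining cases, and the real difficulty, are the feasibility conditions \eqref{cond1}--\eqref{cond3} in small orders. When $n_3=0$ the graph is a path or a cycle and \eqref{cond6} loosens to its $\delta(n_3)=0$ form, so \Path{n} (for $m=n-1$) and \C{n} (for $m=n$) can tie the generic optimum; because $f$ depends only on $3x_{12}-x_{23}$, the tie is automatic, $\Path{n}$ and the spider configuration $(3,m-6,3,0)$ both having $3x_{12}-x_{23}=6$, and it persists only while $\lfloor\frac{3n-2m}{3}\rfloor=3$, i.e. $n\le 9$. More seriously, for small $n$ the \emph{otherwise} values violate \eqref{cond1}, $x_{33}\le\binom{n_3}{2}$: for instance $(n,m)=(7,7)$ yields $x_{33}=2$ with $n_3=2$, and $(n,m)=(7,8)$ yields $x_{33}=4$ with $n_3=3$, both infeasible. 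Since $A$ preserves $n_3$ and raises $x_{33}$ by $1$, it is precisely \eqref{cond1} that can block $A$ even when $x_{23}>x_{12}$; the graphs where this happens are the listed exceptional entries $(2,m-7,4,1)$ and $(1,1,3,3)$, which are therefore maximal although $x_{23}=x_{12}+2$. I expect the bulk of the work to be this bookkeeping: checking that $A$ and $-A'$ always land on graphs satisfying \eqref{cond1}--\eqref{cond6} once $n$ is large enough, and enumerating the finitely many small pairs $(n,m)$ where \eqref{cond1}--\eqref{cond3} force the exceptional rows.

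For the converse I would note that every configuration listed in $F_8$ is realizable (as for the other families), that within each pair $(n,m)$ all listed configurations share the same value of $3x_{12}-x_{23}$ — hence the same $f$ by the reduced form $f=\tfrac{V_6}{2}(3x_{12}-x_{23})+C'$ — and that this common value is the maximum attained over realizable configurations, as established in the forward direction. Consequently each graph of $F_8$ is extremal, completing the characterisation.
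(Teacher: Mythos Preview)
Your overall strategy matches the paper's: use $V_3>0$ to kill $x_{13}$, then drive the configuration toward the ``otherwise'' row of $F_8$ via the two $(n,m)$-preserving vectors $A=(0,0,1,-2,1)$ and $A'=(1,0,-3,1,1)$, both of value $V_6$ under the hypothesis $V_5+V_7=4V_6$. Your reduction of the objective to $f=\tfrac{V_6}{2}(3x_{12}-x_{23})+C'$ is a clean observation the paper does not make explicit; it makes transparent why \Path{n} and the spider tie, and why the generic row dominates once $\lfloor\tfrac{3n-2m}{3}\rfloor\ge 4$. (A small slip: you write ``eliminating $x_{22}$ and $x_{33}$'' but then express $f$ in terms of $x_{12},x_{22}$; you mean eliminating $x_{23}$ and $x_{33}$.)

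The substantive difference is in how the vectors are used, and here your local-move framing has a gap you underestimate. You argue: in an optimum, applying $A$ once must be infeasible, and since ``$A$'s target needs $x_{23}-2\ge x_{12}$'' this forces $x_{23}=x_{12}$. But $A$'s target must satisfy \emph{all} of \eqref{cond1}--\eqref{cond6}, not just \eqref{cond6}; in particular \eqref{cond4} can block $A$ even when $x_{23}-2\ge x_{12}$. Concretely, for $n=7$, $m=9$ the realizable configuration $(x_{12},x_{22},x_{23},x_{33})=(0,2,2,5)$ has $3x_{12}-x_{23}=-2$, is not optimal, yet admits neither move: $-A'$ needs $x_{22}\ge 3$, and $A$ lands on $(0,3,0,6)$, which violates \eqref{cond4} since $x'_{23}=0<\delta(n_2)+\delta(n_3)-1=1$. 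So ``blocking $A$ gives $x_{23}=x_{12}$'' fails. The paper avoids this entirely by applying $A$ with multiplier $\tfrac{x_{23}-x_{12}}{2}$ and then $A'$ with multiplier $\lfloor\tfrac{x'_{22}}{3}\rfloor$ \emph{in one shot}, landing directly on the $F_8$ configuration, whose realizability is asserted once and for all at the start of Section~5.2; the intermediate vectors need not correspond to chemical graphs. This global-transform framing is what makes the ``bookkeeping'' trivial rather than delicate. Your handling of \C{n} is also incomplete: unlike \Path{n}, the cycle never ties (it has $3x_{12}-x_{23}=0$, strictly below every $F_8$ entry), and the paper disposes of it by exhibiting a single better configuration.
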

\begin{proof}
Let $G$ be a chemical graph of order $n$, size $m$ and with $x_{ij}$ $(i,j)$-edges. Assume that it maximizes $f$ over all chemical graphs of order $n$ and size  $m$.    	
As shown by Lemma \ref{lem:V3}, $V_3>0$ implies $x_{13}=0$.

If $m+1=n\in\{7,8,9\}$, then there are only two possibilities: either $x_{12}=2$ and $x_{22}=m-2$, or $x_{12}=3$, $x_{23}=3$ and $x_{22}=m-6$. Since $c_{12}-4c_{22}+3c_{23}=V_5+V_7-4V_6=0$, we deduce that both cases correspond to an optimal graph $G$ that belongs to $F_8$. We now assume $m\geq n$ or $m+1=n\geq 10$.

\vspace{0.2cm}\noindent If $n_3=0$, then $G=\Path{n}$ and $m+1=n\geq 10$ or $G=\C{n}$ and $n=m$.
\begin{itemize}[nosep]
      \item If $m+1=n\geq 10$, then the graph $G'$ with $x_{12}=x_{23}=4, x_{33}=1$ and $x_{22}=m-9$ has value $f(G')=f(G)+2c_{12}-7c_{22}+4c_{23}+c_{33}=f(G)+2V_5+2V_7-7V_6=f(G)+V_6>0$, which contradicts the maximality of $G$.
	\item If $n=m$, then the graph $G'$ with $x_{12}=2, x_{23}=4$, $x_{33}=1$ and $x_{22}=m-7$ has value $f(G')=f(G)+2c_{12}-7c_{22}+4c_{23}+c_{33}=f(G)+2V_5+2V_7-7V_6=f(G)+V_6>0$, which contradicts the maximality of $G$.
\end{itemize}

\vspace{0.2cm}\noindent If $n_3>0$, then consider the following 4 cases:
\begin{itemize}[nosep]
      \item If $m=n=7$, then there are only two possibilities: either $x_{12}=2$, $x_{23}=4$ and $x_{33}=1$, or $x_{12}=1$, $x_{23}=3$ and $x_{22}=3$. Since $c_{12}+c_{23}+c_{33}-3c_{22}=V_5+V_7-3V_6=V_6>0$, we deduce that the first solution is the best, which means that $G\in F_8$.
      \item If $m=n=8$, then there are only three possibilities: either $x_{12}=2$, $x_{22}=1$, $x_{23}=4$ and $x_{33}=1$, or $x_{12}=1$, $x_{23}=3$ and $x_{22}=4$, or $x_{12}=2$ and $x_{23}=6$. As in the previous case, the first solution is better than the second. Also, since  $c_{22}-2c_{23}+c_{33}=V_6>0$, we deduce that the first solution is better than the third one, which implies $G\in F_8$.
\item If $n=7$ and $m=8$, there are four possibilities:
\begin{itemize}[nosep]
\item $x_{22}=2$ and $x_{23}=6$;
\item $x_{22}=3$, $x_{23}=4$ and $x_{33}=1$;
\item $ x_{12}=1$, $x_{23}=5$ and $x_{33}=2$;
\item $x_{12}=x_{22}=1$, $x_{23}=x_{33}=3$.
\end{itemize}
The fourth is better than the first since $c_{12}-c_{22}-3c_{23}+3c_{33}=V_5+V_7-V_6=3V_6>0$. It is better than the second since $c_{12}-2c_{22}-c_{23}+2c_{33}=V_5+V_7-2V_6=2V_6>0$. It is better than the third since $c_{22}-2c_{23}+c_{33}=V_6>0$. Hence, $G\in F_8$.
\item For the remaining case where $n\in\{7,8,9\}$ and $m\geq 9$, or $n\geq 10$, consider  the two $(n,m)$-preserving vectors $A=(0,0,1,-2,1)$ and $A'=(1,0,-3,1,1)$ associated with $V_6$ and $V_5+V_7-3V_6$, respectively. Let $(x'_{12},x'_{13},x'_{22},x'_{23},x'_{33})$ be the $(A,\frac{x_{23}-x_{12}}{2})$-transform of $(x_{12},x_{13},x_{22},x_{23},x_{33})$. Note that $x'_{12}=x'_{23}$ and $x'_{13}=0$. Let $(x''_{12},x''_{13},x''_{22},x''_{23},x''_{33})$ be the $(A',\lfloor\frac{x'_{22}}{3}\rfloor)$-transform of $(x'_{12},x'_{13},x'_{22},x'_{23},x'_{33})$. Note that $x''_{13}=0$,
      	$x''_{12}=x''_{23}$ and $x''_{22}\leq 2$. 
      	Equations \eqref{n} and \eqref{m} then give $x''_{22}=m \mymod 3$, $x''_{12}=x''_{23}=\lfloor\frac{3n-2m}{3}\rfloor$, and $x''_{33}=\lfloor\frac{7m-6n}{3}\rfloor$. Consider any graph $G'$ in $F_8$ with $x''_{ij}$ $(i,j)$-edges. We then have
\vspace{-7pt}\begin{center}$f(G')=
    	f(G)+\frac{x_{23}-x_{12}}{2}V_6+\lfloor\frac{x'_{22}}{3}\rfloor(V_5+V_7-3V_6)=f(G)+(\frac{x_{23}-x_{12}}{2}+\lfloor\frac{x'_{22}}{3}\rfloor)V_6.$\end{center}
      	\vspace{-5pt}If $x_{23}-x_{12}>0$, or $x_{23}-x_{12}=0$ and $x_{22}=x'_{22}>2$,then $f(G')>f(G)$, which contradicts the maximality of $G$. Hence, we can choose $G'$ equal to $G$, which implies $G\in F_8$.\qedhere
\end{itemize}
\end{proof}

\begin{defn}\label{familyF9}
		$F_9$ is the set of chemical graphs with the following numbers $x_{ij}$ of $(i,j)$-edges:
\begin{center}
		\setlength{\extrarowheight}{3pt}
		\begin{tabular}{| c | c | c | c |c | l}\cline{1-5} 
			$x_{12}$ & $x_{13}$ & $x_{22}$& $x_{23}$& $x_{33}$&\\ \cline{1-5}
			0 & $\frac{6n-5m+(2m \mymod 3)}{3}$ & 0 & $\frac{8m-6n-4(2m \mymod 3)}{3}$ &$2m \mod 3$& if $n-1\leq m\leq\frac{6n+2}{5}$\\ \cline{1-5}
			0 & 0 & 0 & $6n-4m$ &$5m-6n$& if $\frac{6n+3}{5}\leq m\leq \frac{3n-3}{2}$  \\ \cline{1-5}
		\end{tabular}
	\end{center}
\end{defn}

\begin{thm}\label{thm_F9}
	Let $f$ be a degree-based topological index such that $V_1{>}0$, $V_2{>}0$ and $V_5{<}0$. A chemical graph $G$ maximizes $f$ over all graphs of the same order and size as $G$ if and only if $G\in F_9$.
\end{thm}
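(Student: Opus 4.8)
The plan is to follow the structure established in Theorems~\ref{thm_F1}, \ref{thm_F4}, and \ref{thm_F6}, since the hypotheses $V_1>0$, $V_2>0$ here match those of the $F_1$/$F_6$ analysis, with only the sign of $V_5$ reversed. First I would invoke Lemma~\ref{lem:V2}: since $V_1>0$ and $V_2>0$, any maximizer $G$ has $x_{12}=x_{22}=0$. Consequently Equations~\eqref{eq_n1}--\eqref{eq_n3} force $n_1=x_{13}$ and $n_2=\frac{x_{23}}{2}$, so $x_{23}$ is even, and Equations~\eqref{n} and \eqref{m} reduce the admissible vectors to a one-parameter family $x_{13}=a$, $x_{23}=6n-4m-4a$, $x_{33}=5m-6n+3a$ with $\max\{0,\lceil\frac{6n-5m}{3}\rceil\}\leq a\leq\lfloor\frac{3n-2m}{2}\rfloor$, exactly as derived in Theorem~\ref{thm_F6}.

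Next I would exploit the sign of $V_5$. Along this one-parameter family, the index value is an affine function of $a$ whose slope is precisely $c_{13}-4c_{23}+3c_{33}=V_5$: increasing $a$ by $1$ corresponds to the $(n,m)$-preserving vector $A=(0,1,0,-4,3)$, which adds $V_5$ to $f$. Because $V_5<0$ here (rather than $V_5>0$ as in $F_1$ or $V_5=0$ as in $F_6$), the maximizer is obtained by making $a$ as \emph{small} as possible, i.e.\ $a=\max\{0,\lceil\frac{6n-5m}{3}\rceil\}$. I would then translate this optimal $a$ back into explicit $x_{ij}$ values, splitting into the two regimes recorded in Definition~\ref{familyF9}: when $m$ is small ($n-1\leq m\leq\frac{6n+2}{5}$) the binding constraint is $x_{33}\geq 0$, so the floor/ceiling arithmetic yields $x_{33}=2m\bmod 3$ together with the stated $x_{13}$ and $x_{23}$; when $m$ is large ($\frac{6n+3}{5}\leq m\leq\frac{3n-3}{2}$) the binding constraint becomes $x_{13}\geq 0$, giving $a=0$, hence $x_{13}=0$, $x_{23}=6n-4m$, $x_{33}=5m-6n$.

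The cleanest way to present the contradiction direction is the transform argument used in the earlier proofs: if $G$ has $a$ strictly larger than the minimal value $a^\ast=\max\{0,\lceil\frac{6n-5m}{3}\rceil\}$, apply the $(A,-(a-a^\ast))$-transform with $A=(0,1,0,-4,3)$ to obtain a valid vector with $a=a^\ast$, realize it by a graph $G'\in F_9$, and note $f(G')=f(G)-(a-a^\ast)V_5>f(G)$ since $V_5<0$. This contradicts maximality, forcing $a=a^\ast$ and hence $G\in F_9$; the converse is immediate since all members of $F_9$ share the same $x_{ij}$ vector for given $n,m$.

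The main obstacle I anticipate is purely arithmetic bookkeeping rather than conceptual: verifying that rounding $a^\ast$ via the ceiling $\lceil\frac{6n-5m}{3}\rceil$ reproduces exactly the congruence expressions $\frac{6n-5m+(2m\bmod 3)}{3}$ and $2m\bmod 3$ appearing in Definition~\ref{familyF9}, and pinning down the precise crossover value of $m$ (near $\frac{6n}{5}$) at which the two rows of the table switch. One must confirm that the two constraints $\max\{0,\lceil\frac{6n-5m}{3}\rceil\}$ and $\lfloor\frac{3n-2m}{2}\rfloor$ are mutually consistent (the feasible interval for $a$ is nonempty) throughout the chemical range $n-1\leq m\leq\frac{3n-3}{2}$, and to double-check the boundary cases where $6n-5m$ is or is not divisible by $3$ so that the $2m\bmod 3$ term lands correctly.
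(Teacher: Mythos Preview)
Your proposal is correct and follows essentially the same approach as the paper: both invoke Lemma~\ref{lem:V2} to force $x_{12}=x_{22}=0$, then use the $(n,m)$-preserving vector $A=(0,1,0,-4,3)$ associated with $V_5$ (with a negative multiplier, since $V_5<0$) to push toward the minimal admissible value of $x_{13}$. The only cosmetic difference is that the paper organizes the argument as a case split on whether $x_{33}\le 2$ or $x_{13}=0$, whereas you parametrize the whole family by $a=x_{13}$ and minimize directly; the arithmetic verification you flag is indeed the only remaining work.
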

\vspace{-15pt}\begin{proof}
Let $G$ be a chemical graph of order $n$, size $m$ and with $x_{ij}$ $(i,j)$-edges. Assume that it maximizes $f$ over all chemical graphs of order $n$ and size  $m$.    	
As shown by Lemma \ref{lem:V2}, $V_1>0$ and $V_2>0$ imply $x_{12}=x_{22}=0$.
	\begin{itemize}[nosep]
		\item If $x_{33}\leq 2$,  Equations~\eqref{n} and \eqref{m} give $x_{33}=(2m \mymod 3)$, $x_{13}=\frac{6n-5m+(2m \mymod 3)}{3}$ and $x_{23}=\frac{8m-6n-4(2m \mymod 3)}{3}$. Since $x_{13}\geq 0$, we have $6n-5m\geq -2$, which implies $G\in F_9$.
		\item If $x_{33}\geq 3$ then
		\begin{itemize}[nosep]
			\item if $x_{13}=0$,  Equations~\eqref{n} and \eqref{m} give $x_{23}=6n-4m$ and $x_{33}=5m-6n$. Since $x_{33}\geq 3$, we have $m\geq \frac{6n+3}{5}$, which implies $G\in F_9$.			
			\item if $x_{13}>0$, consider the $(n,m)$-preserving vector $A=(0,1,0,-4,3)$ associated with $V_5$, let $a=\min\{x_{13},\lfloor\frac{x_{33}}{3}\rfloor\}$, and let  $(x'_{12},x'_{13},x'_{22},x'_{23},x'_{33})$ be the $(A,-a)$-transform of $(x_{12},x_{13},x_{22},x_{23},x_{33})$. Then either $x'_{33}\leq 2$, or $x'_{33}\geq 3$ and $x_{13}=0$. In both cases, we have seen that there is a graph $G'\in F_9$ with $x'_{ij}$ $(i,j)$-edges. We therefore have
			$f(G')=f(G)-aV_5>f(G)$,
			which contradicts the maximality of $G$.\qedhere
			\end{itemize}
		\end{itemize}
			\end{proof}

\begin{cor}\label{cor_F89}
	$F_8\cup F_9$ is the set of extremal chemical graphs for the topological index ABC. 
\end{cor}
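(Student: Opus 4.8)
The plan is to imitate the reductions used in Corollaries \ref{cor_F12}, \ref{cor_F34}, \ref{cor_F135} and \ref{cor_F67}: record the coefficients of ABC, verify the hypotheses of Theorem \ref{thm_F9} for ABC and those of Theorem \ref{thm_F8} for its complement \textoverline{ABC}, then conclude. First I would note that ABC has $c_{12}=c_{22}=c_{23}=\tfrac{1}{\sqrt2}$, $c_{13}=\sqrt{\tfrac{2}{3}}$ and $c_{33}=\tfrac{2}{3}$. The coincidence $c_{12}=c_{22}=c_{23}$ is the structural feature that makes the argument work and should be isolated at once, together with the elementary complementation rules $V_3(\overline f)=V_1(f)$ and $V_k(\overline f)=-V_k(f)$ for $k\in\{5,6,7\}$, which turn every check for \textoverline{ABC} into a check for ABC.

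For the maximization direction I would evaluate $V_1$, $V_2$ and $V_5$ at these coefficients. Since $c_{3i}-c_{2i}$ is minimized at $i=3$, with value $c_{33}-c_{23}=\tfrac{2}{3}-\tfrac{1}{\sqrt2}<0$, both minima in $V_1$ equal $c_{33}-c_{23}$, so $V_1=c_{13}-c_{22}+2(c_{33}-c_{23})$; and $V_2=c_{13}-c_{12}$ because the minimum defining $V_2$ equals $0$ (attained at $i=2$). One computes $V_5=c_{13}-4c_{23}+3c_{33}=\sqrt{\tfrac{2}{3}}-2\sqrt2+2$. With $V_1>0$, $V_2>0$ and $V_5<0$, Theorem \ref{thm_F9} gives that the chemical graphs maximizing ABC are exactly those of $F_9$.

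For the minimization direction I would apply Theorem \ref{thm_F8} to \textoverline{ABC}. By the complementation rules its three hypotheses reduce to statements about ABC: $V_3(\overline f)=V_1(f)>0$ holds by the previous paragraph; $V_6=c_{22}-2c_{23}+c_{33}=\tfrac{2}{3}-\tfrac{1}{\sqrt2}<0$ for ABC gives $V_6>0$ for \textoverline{ABC}; and a one-line simplification yields $V_5+V_7-4V_6=c_{12}+3c_{23}-4c_{22}$, which vanishes for ABC precisely because $c_{12}=c_{22}=c_{23}$, hence vanishes for \textoverline{ABC} as well. Thus $V_3>0$, $V_6>0$ and $V_5+V_7=4V_6$ hold for \textoverline{ABC}, so Theorem \ref{thm_F8} shows that the chemical graphs maximizing \textoverline{ABC}, i.e.\ minimizing ABC, are exactly those of $F_8$. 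Combining the two directions gives that $F_8\cup F_9$ is the set of extremal chemical graphs for ABC.

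The only genuine difficulty I anticipate is certifying the signs of $V_5$ and $V_1$: both are differences of surds lying within about $0.03$ of zero, so decimal estimates do not constitute a proof. For $V_5<0$ I would reduce to $\sqrt6+6<6\sqrt2$, and then, by squaring the equivalent inequality $\sqrt6<6(\sqrt2-1)$, to $72\sqrt2<102$, which holds since $\sqrt2<\tfrac{102}{72}$; an analogous exact reduction certifies $V_1>0$. Everything else is mechanical substitution once the coefficients and the complementation rules are in hand.
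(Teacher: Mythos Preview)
Your proposal is correct and follows essentially the same approach as the paper: verify that ABC satisfies the hypotheses of Theorem~\ref{thm_F9} ($V_1>0$, $V_2>0$, $V_5<0$) and that \textoverline{ABC} satisfies those of Theorem~\ref{thm_F8} ($V_3>0$, $V_6>0$, $V_5+V_7=4V_6$), then invoke the two theorems. You add more detail than the paper (which simply asserts that the checks are easy), in particular the complementation identities $V_3(\overline f)=V_1(f)$, $V_k(\overline f)=-V_k(f)$ and the exact surd manipulations for the borderline signs of $V_1$ and $V_5$; these are welcome but do not constitute a different argument.
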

\vspace{-15pt}\begin{proof}
	It is easy to check that $V_3>0$, $V_6>0$ and $V_5+V_7=4V_6$ for \textoverline{ABC}, while $V_1>0$, $V_2>0$ and $V_5<0$ for ABC, which means that $F_8$ is the set of chemical graphs which minimize ABC, while $F_9$ is the set of chemical graphs which maximize ABC.
\end{proof}

\begin{thm}\label{thm_F82}
	A chemical graph  $G$ maximizes the $f$=aZagreb topological index over all graphs of the same order and size as $G$ if and only if $G{\in }F_{8}$, except in two cases where the $x_{ij}$ values of $G$ are as follows: if $n{=}7$ and $m{=}8$  then $x_{12}{=}x_{13}{=}x_{23}{=}1$, $x_{22}=0$ and $x_{33}{=}5$;  if $n{=}8$ and $m{=}8$  then  $x_{12}{=}x_{23}{=}2$, $x_{13}{=}1$, $x_{22}=0$ and $x_{33}{=}3$.
\end{thm}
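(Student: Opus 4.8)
The plan is to reduce the problem to a two-variable optimization. For aZagreb one has $c_{12}=c_{22}=c_{23}=8$, $c_{13}=\tfrac{27}{8}$ and $c_{33}=\tfrac{729}{64}$, and a direct check gives the three algebraic facts that organize everything: $V_6>0$ and $V_5+V_7=4V_6$ (exactly the hypotheses of Theorem~\ref{thm_F8}), but now $V_3<0$, so Lemma~\ref{lem:V3} is no longer available to discard $(1,3)$-edges. Since every $(n,m)$-preserving vector $A$ satisfies $a_{12}+a_{13}+a_{22}+a_{23}+a_{33}=0$, the coincidence $c_{12}=c_{22}=c_{23}$ yields $64\sum c_{ij}a_{ij}=217\,a_{33}-296\,a_{13}$; hence for any two chemical graphs $G,G'$ of equal order and size, $64\bigl(f(G)-f(G')\bigr)=217(x_{33}-x_{33}')-296(x_{13}-x_{13}')$. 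Maximizing aZagreb over all chemical graphs of order $n$ and size $m$ is therefore equivalent to maximizing $\phi:=217\,x_{33}-296\,x_{13}$, and the exceptions will be exactly those instances where buying one $(1,3)$-edge pays for enough additional $(3,3)$-edges to raise $\phi$.

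I would first dispose of the graphs with $x_{13}=0$. After a maximizer has been shown to satisfy $x_{13}=0$, the proof of Theorem~\ref{thm_F8} uses only $V_6>0$ and $V_5+V_7=4V_6$, both of which hold here; so that argument applies verbatim and shows that, among chemical graphs of order $n$ and size $m$ with no $(1,3)$-edge, $\phi$ (equivalently $f$) is maximized precisely by the members of $F_8$. Writing $G_8$ for such a representative and $x_{33}^{(8)}$ for its number of $(3,3)$-edges, we have $\phi(G_8)=217\,x_{33}^{(8)}$, and one computes $x_{33}^{(8)}=\lfloor(7m-6n)/3\rfloor$ outside a bounded set of small $(n,m)$.

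The heart of the argument is a maximizer $G$ with $x_{13}\ge 1$, handled through $(n,m)$-preserving moves with $a_{13}=-1$, for which $64\,\Delta f=217\,a_{33}+296>0$ as soon as $a_{33}\ge-1$ and the outcome is realizable; three instances suffice: $B_1=(0,-1,3,-2,0)$, $B_2=(1,-1,-1,1,0)$ and $B_3=(0,-1,2,0,-1)$. Since $\phi(G)\ge\phi(G_8)\ge0$ and $x_{13}\ge1$ force $x_{33}\ge1$, the move $B_2$ (which leaves $n_1,n_2,n_3$ and $x_{33}$ unchanged) improves $f$ unless $x_{22}=0$; when $x_{22}=0$ the only configuration with $n_2=0$ is uniquely determined by $(n,m)$ and a short computation using $m\le\frac{3n-3}{2}$ shows it is strictly dominated by $G_8$, so $n_2\ge1$; finally $B_3$ (which lowers $n_3$ and $x_{33}$ each by one) is realizable and improving whenever $n_3\ge5$, because its only possible obstructions, conditions~\eqref{cond1} and~\eqref{cond2}, become vacuous once the new $n_3\ge4$ and the new $n_2\ge3$, while~\eqref{cond6} survives since $x_{23}\ge x_{12}$ already holds for $G$. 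Hence every maximizer with $x_{13}\ge1$ satisfies $x_{22}=0$, $n_2\ge1$ and $n_3\le4$.

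These inequalities bound the instance: $n_3\le4$ gives $x_{13}+x_{23}+2x_{33}\le12$, and together with $x_{12}\le x_{23}$ (from~\eqref{cond6}, as $n_3\ge1$) and $x_{22}=0$ this forces $n$ below an absolute constant. It then remains to run a finite check: for each admissible $(n,m,x_{ij})$ with $x_{13}\ge1$ in this bounded range, compute $\phi$ from Equations~\eqref{n}--\eqref{m} under the realizability conditions~\eqref{cond1}--\eqref{cond6} and compare with $\phi(G_8)$. This yields an $x_{13}\ge1$ graph beating $F_8$ only for $(n,m)=(7,8)$ and $(8,8)$, producing the two stated edge vectors, and confirms $G\in F_8$ for every other order and size; the converse is immediate since $f$ depends only on the $x_{ij}$, so all graphs with the listed counts are extremal. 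I expect the main obstacle to be this last step: establishing that~\eqref{cond1}--\eqref{cond2} are genuinely the only barriers to deleting a $(1,3)$-edge, and verifying that the finite search returns exactly these two exceptions and nothing more.
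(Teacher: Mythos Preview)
Your reduction is the real gain here and is not in the paper: observing that $c_{12}=c_{22}=c_{23}=8$ collapses the objective to $f(G)=8m+\tfrac{1}{64}\phi(G)$ with $\phi=217x_{33}-296x_{13}$ is exactly right, and it makes the role of the two exceptional instances transparent (trading one $(1,3)$-edge for enough extra $(3,3)$-edges). Your pipeline for a maximizer with $x_{13}\ge1$ --- use $B_2$ to force $x_{22}=0$, rule out $n_2=0$ by direct comparison with $G_8$, then use $B_3$ to force $n_3\le4$ --- is sound; the realizability of $B_2$ and $B_3$ does check out against conditions~\eqref{cond1}--\eqref{cond6} once one notes $n_3\ge2$ (from $x_{33}\ge1$) and $x_{23}\ge1$ (a degree-$2$ vertex cannot have two degree-$1$ neighbours in a connected graph of order $\ge7$), though your justification skips~\eqref{cond4} and~\eqref{cond5}. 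The vector $B_1$ is listed but never used.

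Where your route and the paper's diverge is the endgame. You cap $n_3\le4$, which does bound the instance, but the residual search is larger than you suggest: one must enumerate all realizable vectors with $x_{22}=0$, $x_{13}\ge1$, $n_3\in\{3,4\}$, $x_{33}\ge2$, and each admissible $x_{12}\le x_{23}$, reconstruct $(n,m)$ from them, and then compare $\phi$ against $\phi(G_8(n,m))=217\lfloor(7m-6n)/3\rfloor$ (and its small-$(n,m)$ variants). That is a few dozen cases rather than two or three, and you have not carried it out. The paper avoids this entirely: instead of bounding $n_3$, it applies three $(n,m)$-preserving transforms $A_1=(1,-1,0,-1,1)$, $A_2=(1,-1,-1,1,0)$, $A_3=(2,-3,0,2,-1)$ in sequence to drive the configuration to $x^{3}_{13}\le2$ with $x^{3}_{12}=x^{3}_{23}$ and $x^{3}_{22}=0$, which leaves exactly three parametric families indexed by $m\bmod 3$; a single signed linear combination of the $c_{ij}$ then decides, for each residue class and each small $(n,m)$, whether the $x_{13}=0$ branch (hence $F_8$) or the $x_{13}\in\{1,2\}$ branch wins, and the two exceptions drop out of the $m\equiv 2\pmod 3$ case at $m=8$. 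Your $\phi$-reduction would in fact streamline the paper's computations (all of its $W_i$ and the later sign checks are instances of $217a_{33}-296a_{13}$), but as written your argument still owes the reader the enumeration that the paper's parametric reduction sidesteps.
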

\vspace{-15pt}\begin{proof}
   The aZagreb topological index is defined by $c_{ij}{=}(\frac{ij}{i{+}j{-}2})^3$ (see Table \ref{tab_inv}). 
Let $G$ be a chemical graph of order $n $, size $m$ and with $x_{ij}$ $(i,j)$-edges. Assume that it maximizes $f$ over all chemical graphs of order $n$ and size  $m$.  

Consider the three $(n,m)$-preserving vectors $A_1,A_2,A_3$ associated with the three  following strictly positive values $W_1,W_2,W_3$:

\vspace{-10pt} \begin{itemize}
       \setlength{\itemsep}{-5pt}
 \item $A_1=(1,-1,0,-1,1)$ is associated with $W_1=c_{12}-c_{13}-c_{23}+c_{33}\approx 8.01$;
 \item $A_2=(1,-1,-1,1,0)$ is associated with $W_2=c_{12}-c_{13}-c_{22}+c_{23}\approx 4.62$;
 \item $A_3=(2,-3,0,2,-1)$ is associated with $W_3=2c_{12}-3c_{13}+2c_{23}-c_{33}\approx 10.48$.
 \end{itemize}
\vspace{-10pt} Let 
\vspace{-10pt}  \begin{itemize}
       \setlength{\itemsep}{-4pt}
     \item $(x^1_{12}, x^1_{13}, x^1_{22}, x^1_{23}, x^1_{33})$ be the $(A_1,\max\left\{0,\min\{x_{13},\frac{x_{23}-x_{12}}{2}\}\right\})$-transform of $(x_{12}$, $ x_{13}$, $x_{22}$, $x_{23}$,  $x_{33})$;
     \item $(x^2_{12},x^2_{13},x^2_{22},x^2_{23},x^2_{33})$ be the $(A_2,\min\{x^1_{13},x^1_{22}\})$-transform of $(x^1_{12},x^1_{13},x^1_{22},x^1_{23},x^1_{33})$;
     \item $(x^3_{12},x^3_{13},x^3_{22},x^3_{23},x^3_{33})$ be the $(A_3,\lfloor\frac{x^2_{13}}{3}\rfloor)$-transform of $(x^2_{12},x^2_{13},x^2_{22},x^2_{23},x^2_{33})$.
     \end{itemize}
Note that if $x^2_{13}>0$, then $x^2_{22}=0$ and $x^2_{12}=x^2_{23}$, which implies $x^2_{33}>0$, else $G$ has order $n\leq 6$. We then have $x^3_{13}\leq 2$ and if $x^3_{13}=0$, then $x^3_{12}=x^3_{23}$ and $x^3_{22}=0$. There are therefore only 3 possible cases for which we can derive the $x^3_{ij}$ values using Equations~\eqref{n} and \eqref{m}:
\begin{itemize}[nosep]
    \item[(1)] if $x^3_{13}=0$, then $G\in F_8$. Indeed the proof of Theorem \ref{thm_F8} uses the fact that $V_3>0$ only to show that $x_{13}=0$, and it is easy to check that $V_6>0$ and $V_5+V_7=4V_6$ for the aZagreb topological index. Therefore, 
    \begin{itemize}[nosep]
    \item if $x_{13}=0$, then all $x_{ij}$ values are equal to the $x^3_{ij}$ values and as in Theorem \ref{thm_F8}, we conclude that $G\in F_8$;
    \item if $x_{13}>0$, then as in Theorem \ref{thm_F8}, we know that there exists a graph $G'\in F_8$ of order $n$ and size $m$ which contradicts the maximality of $G$ since
\vspace{-0.2cm}\begin{eqnarray*}f(G')&\geq &f(x^3_{12},x^3_{13},x^3_{22},x^3_{23},x^3_{33})\\[0pt]&=&f(G){+}W_1(\max\left\{0,\min\{x_{13},\frac{x_{23}-x_{12}}{2}\}\right\}){+}W_2(\min\{x^1_{13},x^1_{22}\}){+}W_3(\lfloor\frac{x^2_{13}}{3}\rfloor)\\[-10pt]&>&f(G).
\end{eqnarray*}
    \end{itemize}
    \item[(2)] if $x^3_{13}=1$, $x^3_{12}=x^3_{23}$ and $x^3_{22}$=0, then Equations~\eqref{n} and \eqref{m} give $x^3_{12}=x^3_{23}=\frac{3n-2m-2}{3}$ and $x^3_{33}=\frac{7m-6n+1}{3}$, which implies $m \mymod 3=2$.
        \item[(3)] if $x^3_{13}=2$, $x^3_{12}=x^3_{23}$ and $x^3_{22}$=0, then Equations~\eqref{n} and \eqref{m} give $x^3_{12}=x^3_{23}=\frac{3n-2m-4}{3}$ and $x^3_{33}=\frac{7m-6n+2}{3}$, which implies $m \mymod 3=1$.
\end{itemize}
Let us analyze the situation according to the value of $m\mymod 3$:
\begin{itemize}[nosep]
\item if $m\mymod 3=0$ then $G\in F_8$ (since we are in Case (1));
\item if $m\mymod 3=1$, then 
\begin{itemize}[nosep]
\item if $m \geq 10$ or $m+1=n=8$, then Case (1) is better than Case (3) since $c_{12}-2c_{13}+c_{22}+c_{23}-c_{33}\approx 5.85>0$. Hence, $G\in F_8.$
\item if $m=n=7$, then Case (1) is better than Case (3) since $c_{12}-2c_{13}+3c_{23}-2c_{33}\approx 2.46>0.$ Hence $G\in F_8.$
\end{itemize}
\item if $m\mymod 3=2$, 
\begin{itemize}[nosep]
    \item if $m\geq 11$ or $m+1=n=9$ then Case (1) is better than Case (2) since $-c_{13}+2c_{22}-c_{33}\approx 1.23>0.$ Hence $G\in F_8.$
    \item if $m=8$ and $n\in\{7,8\}$ then Case (2) is better than Case (1) since $c_{13}-c_{22}-2c_{23}+2c_{33}\approx 2.15>0.$ Moreover, the $x_{ij}$ values of $G$ are equal to the $x^3_{ij}$ values else the graph $G'$ with $x^3_{ij}$ $(i,j)$-edges is such that $f(G')>f(G)$. Hence, 
    $x_{12}=x_{13}=x_{23}=1$ and $x_{33}=5$ if $n=7$ and $x_{12}=x_{23}=2$, $x_{13}=1$ and $x_{33}=3$ if $n=8$.\qedhere
\end{itemize}
\end{itemize}

\end{proof}

\vspace{0.1cm}\begin{defn}\label{familyF10}
		$F_{10}$ is the set of chemical graphs with the following numbers $x_{ij}$ of $(i,j)$-edges:
\begin{center}
		\setlength{\extrarowheight}{4pt}
		\begin{tabular}{| c | c | c | c |c | l}\cline{1-5} 
			$x_{12}$ & $x_{13}$ & $x_{22}$& $x_{23}$& $x_{33}$&\\ \cline{1-5}
			0 & $\frac{6n-5m}{3}$ & 0 & $\frac{8m-6n}{3}$ &0& if $n-1\leq m\leq\frac{6n-2}{5}$ and $m\mymod 3=0$\\ \cline{1-5}
			0 & $\frac{6n-5m-1}{3}$ & 1 & $\frac{8m-6n-2}{3}$ &0& if $n-1\leq m\leq\frac{6n-2}{5}$ and $m\mymod 3=1$\\ \cline{1-5}
			0 & $\frac{6n-5m+1}{3}$ & 0 & $\frac{8m-6n-4}{3}$ &1& if $n-1\leq m\leq\frac{6n-2}{5}$ and $m\mymod 3=2$\\ \cline{1-5}
			0 & 0 & 1 & $m-1$ &0& if $m=\frac{6n-1}{5}$\\ \cline{1-5}
			0 & 0 & 0 & $6n-4m$ &$5m-6n$& if $\frac{6n}{5}\leq m\leq \frac{3n-3}{2}$\\ \cline{1-5}
		\end{tabular}
	\end{center}
\end{defn}

\vspace{0.12cm}\begin{thm}\label{thm_F10}
	A chemical graph $G$ minimizes the $f$=aZagreb topological index over all graphs of the same order and size as $G$ if and only if $G\in F_{10}$.
\end{thm}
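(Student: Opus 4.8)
The plan is to exploit the special arithmetic of aZagreb, for which $c_{12}=c_{22}=c_{23}=8$, $c_{13}=\tfrac{27}{8}$ and $c_{33}=\tfrac{729}{64}$. Using $x_{12}+x_{13}+x_{22}+x_{23}+x_{33}=m$ (Equation~\eqref{m}), the index collapses to a function of only two variables, $f(G)=8m-\tfrac{37}{8}\,x_{13}+\tfrac{217}{64}\,x_{33}$, so minimizing $f$ over all chemical graphs of order $n$ and size $m$ is equivalent to making $x_{13}$ as large and $x_{33}$ as small as possible, with the precise weights $\tfrac{37}{8}$ and $\tfrac{217}{64}$. I would stress at the outset why a fresh family is needed: for the complement \textoverline{aZagreb} the value $V_1$ is negative, so none of Lemmas~\ref{lem:V1}--\ref{lem:V4} apply. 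In particular Lemma~\ref{lem:V1} cannot force $x_{22}=0$, which is exactly why $F_{10}$ (unlike $F_9$) is allowed to carry a single $(2,2)$-edge.

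First I would normalize an optimal $G$ with $(n,m)$-preserving transforms of negative aZagreb value. The vectors $A=(-1,1,1,-1,0)$ and $A=(-1,1,0,1,-1)$ (both of negative value, the second being $-A_1$ of Theorem~\ref{thm_F82}) remove a $(1,2)$-edge, and $A=(0,1,-3,2,0)$ (value $-\tfrac{37}{8}$) lowers $x_{22}$ by $3$ while raising $x_{13}$; applying these shows an optimal $G$ has $x_{12}=0$ and $x_{22}\le 2$. The $V_5$-vector $A=(0,1,0,-4,3)$ has value $V_5=\tfrac{355}{64}>0$, so its negative strictly decreases $f$ whenever $x_{13}\ge1$ and $x_{33}\ge3$; hence at an optimum either $x_{13}=0$ or $x_{33}\le2$. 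Once $x_{12}=0$, Equations~\eqref{n} and~\eqref{m} give $x_{22}=6n-5m-3x_{13}+x_{33}$ and $x_{23}=6(m-n)+2x_{13}-2x_{33}$, so for each fixed $x_{33}$ the maximal feasible $x_{13}$ equals $\lfloor\tfrac{6n-5m+x_{33}}{3}\rfloor$, the residue $(6n-5m+x_{33})\bmod 3$ being the resulting $x_{22}$.

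The heart of the argument is then the discrete maximization of $\tfrac{37}{8}\lfloor\tfrac{6n-5m+x_{33}}{3}\rfloor-\tfrac{217}{64}x_{33}$ over $x_{33}\ge0$, noting $6n-5m\equiv m\pmod 3$. The two decisive strict inequalities are that raising $x_{33}$ by $3$ (which buys one unit of $x_{13}$) changes the objective by $\tfrac{37}{8}-\tfrac{651}{64}=-\tfrac{355}{64}<0$, while raising $x_{33}$ from $0$ to $1$ buys a unit of $x_{13}$ precisely when $m\equiv2\pmod3$ and then gains $\tfrac{37}{8}-\tfrac{217}{64}=\tfrac{79}{64}>0$. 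This yields $x_{33}=0$ for $m\equiv0,1$ and $x_{33}=1$ for $m\equiv2$, the leftover $(2,2)$-edge appearing exactly when $m\equiv1$; these are the first three rows of $F_{10}$. I would then split on the size regime: for $n-1\le m\le\frac{6n-2}{5}$ this is the sparse case just treated; at the knife-edge $m=\frac{6n-1}{5}$ one has $6n-5m=1$, so a positive $x_{13}$ would require $x_{33}\ge2$ at a net loss, forcing $x_{13}=0,\ x_{22}=1,\ x_{23}=m-1$; and for $\frac{6n}{5}\le m\le\frac{3n-3}{2}$ one has $6n-5m\le0$, so $x_{13}=0$ is forced with $x_{23}=6n-4m,\ x_{33}=5m-6n$. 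Because all the comparisons are strict, the optimal pair $(x_{13},x_{33})$ is unique in each case, and then the equations above force $x_{12}=0$ and the remaining $x_{ij}$; conversely each listed configuration is realizable (as noted in Section~\ref{sec_4families}) and attains the optimum, giving the ``if and only if''.

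The hard part will be the integrality bookkeeping: verifying that the stated row is the \emph{unique} optimizer in each residue class modulo $3$ and each regime, correctly matching the three boundaries $\frac{6n-2}{5},\frac{6n-1}{5},\frac{6n}{5}$ to the floor of $\tfrac{6n-5m+x_{33}}{3}$, and checking the small-order corner cases where the transforms may reach nonnegativity boundaries (e.g. $x_{23}=0$ or $x_{33}=0$). All the sign decisions are governed by the single knife-edge $\tfrac{37}{8}<3\cdot\tfrac{217}{64}$ together with $\tfrac{37}{8}>\tfrac{217}{64}$, but translating these faithfully through the floor function and the realizability constraints is where the genuine work lies.
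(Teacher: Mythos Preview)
Your approach is correct and genuinely different from the paper's. The paper chains five $(n,m)$-preserving transforms $A_1,\dots,A_5$ (with negative aZagreb values $W_1,\dots,W_5$) and shows that the composite lands in $F_{10}$; if any step is nontrivial, the $F_{10}$ graph beats $G$. You instead exploit the accidental equality $c_{12}=c_{22}=c_{23}=8$ to collapse $f(G)$ to $8m-\tfrac{37}{8}x_{13}+\tfrac{217}{64}x_{33}$, reducing the problem to a two-variable discrete optimization governed by the single constraint $3x_{13}+4x_{12}\le 6n-5m+x_{33}$ coming from $x_{22}\ge 0$. This is more transparent: the two decisive inequalities $\tfrac{37}{8}>\tfrac{217}{64}$ and $\tfrac{37}{8}<3\cdot\tfrac{217}{64}$ do all the work, and the case split on $m\bmod 3$ and on the threshold $6n-5m$ falls out naturally. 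The paper's transform sequence is more mechanical but requires no index-specific insight.

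One point to tighten: your preliminary ``normalization'' step, where you apply $A=(-1,1,1,-1,0)$ or $A=(-1,1,0,1,-1)$ once to force $x_{12}=0$, is not quite rigorous as stated, because a single application can produce a non-realizable tuple (for instance, if $n_2=1$ with $x_{12}=x_{23}=1$ and $x_{22}=0$, then after $(-1,1,1,-1,0)$ one gets $x'_{22}=1>\tfrac{n_2(n_2-1)}{2}=0$, violating condition~\eqref{cond2}). Fortunately this step is unnecessary: the bound $x_{13}\le\lfloor(6n-5m+x_{33})/3\rfloor$ already follows from $x_{22}\ge 0$ and $x_{12}\ge 0$ without any assumption on $x_{12}$, and the floor drops by at least one as soon as $x_{12}\ge 1$ (since subtracting $4$ from the numerator lowers $\lfloor\cdot/3\rfloor$). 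So your direct optimization simultaneously gives the lower bound on $f$ and the uniqueness of the minimizing tuple, including $x_{12}=0$. Either drop the single-step transforms and argue this way throughout, or, as the paper does, compose them all the way to an $F_{10}$ configuration before invoking realizability.
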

\begin{proof}
As in the previous theorem, we have $c_{ij}{=}(\frac{ij}{i{+}j{-}2})^3$. Let $G$ be a chemical graph of order $n$, size $m$ and with $x_{ij}$ $(i,j)$-edges. Assume that it minimizes $f$ over all chemical graphs of order $n$ and size  $m$.    	
Note that $G\neq \Path{n}$ since for $n=m-1$, the chemical graph $G'$ with $x'_{12}=1$, $x'_{13}=2$, $x'_{23}=1$ and $x'_{22}=m-4$ would have value $f(G')=f(G)-c_{12}+2c_{13}-2c_{22}+c_{23}\approx f(G)-9.25$. Hence, $x_{12}\leq x_{23}$.

Consider the five $(n,m)$-preserving vectors $A_1,A_2,A_3,A_4,A_5$ associated with the five  following strictly negative values $W_1,W_2,W_3,W_4,W_5$:

\vspace{-6pt} \begin{itemize}
       \setlength{\itemsep}{-4pt}
 \item $A_1=(-1,1,1,-1,0)$ is associated with $W_1=-c_{12}+c_{13}+c_{22}-c_{23}\approx -4.62$;
 \item $A_2=(0,1,-2,0,1)$ is associated with $W_2=c_{13}-2c_{22}+c_{33}\approx -1.23$;
 \item $A_3=(0,-1,0,4,-3)$ is associated with $W_3=-c_{13}+4c_{23}-3c_{33}\approx -5.54$;
 \item $A_4=(0,0,-1,2,-1)$ is associated with $W_4=-c_{22}+2c_{23}-c_{33}\approx -3.39$;
  \item $A_5=(0,-1,1,2,-2)$ is associated with $W_5=-c_{13}+c_{22}+2c_{23}-2c_{33}\approx -2.15$.
\end{itemize}
\vspace{-10pt} Let 
\vspace{-10pt}  \begin{itemize}
       \setlength{\itemsep}{-4pt}
     \item $(x^1_{12},x^1_{13},x^1_{22},x^1_{23},x^1_{33})$ be the $(A_1,x_{12})$-transform of $(x_{12},x_{13},x_{22},x_{23},x_{33})$;
     \item $(x^2_{12},x^2_{13},x^2_{22},x^2_{23},x^2_{33})$ be the $(A_2,\lfloor\frac{x^1_{22}}{2}\rfloor)$-transform of $(x^1_{12},x^1_{13},x^1_{22},x^1_{23},x^1_{33})$;
     \item $(x^3_{12},x^3_{13},x^3_{22},x^3_{23},x^3_{33})$ be the $(A_3,\min\{\lfloor\frac{x^2_{33}}{3}\rfloor,x^2_{13}\})$-transform of $(x^2_{12},x^2_{13},x^2_{22},x^2_{23},x^2_{33})$;
     \item $(x^4_{12},x^4_{13},x^4_{22},x^4_{23},x^4_{33})$ be the $(A_4,\min\{x^3_{22},x^3_{33}\})$-transform of $(x^3_{12},x^3_{13},x^3_{22},x^3_{23},x^3_{33})$;
     \item $(x^5_{12},x^5_{13},x^5_{22},x^5_{23},x^5_{33})$ be the $(A_5,\min\{x^4_{13},\lfloor\frac{x^4_{33}}{2}\rfloor\})$-transform of $(x^4_{12},x^4_{13},x^4_{22},x^4_{23},x^4_{33})$.
     \end{itemize}
We then have $x^5_{12}=0, x^5_{22}\leq 1$, $x^5_{13}=0$ or $x^5_{33}\leq 1$, and $x^4_{22}+x^4_{33}\leq 1$. There are therefore only 5 possible cases for which we can derive the $x^5_{ij}$ values using Equations~\eqref{n} and \eqref{m}:
\begin{itemize}
	       \setlength{\itemsep}{-4pt}
\item if $x^5_{22}=x^5_{33}=0$ and $x^5_{13}\geq 1$, then $x^5_{13}=\frac{6n-5m}{3}$ and $x^5_{23}=\frac{8m-6n}{3}$, which implies $6n-5m\geq 3$ and $m\mymod 3=0$;
\item if $x^5_{22}=1, x^5_{33}=0$ and $x^5_{13}\geq 1$, then $x^5_{13}=\frac{6n-5m-1}{3}$ and $x^5_{23}=\frac{8m-6n-2}{3}$, which implies $6n-5m\geq 4$ and $m\mymod 3=1$;
\item if $x^5_{22}=0, x^5_{33}=1$ and $x^5_{13}\geq 1$, then $x^5_{13}=\frac{6n-5m+1}{3}$ and $x^5_{23}=\frac{8m-6n-4}{3}$, which implies $6n-5m\geq 2$ and $m\mymod 3=2$;
\item if $x^5_{22}=x^5_{13}=0$ and $x^5_{33}\geq 0$, then $x^5_{33}=5m-6n$ and $x^5_{23}=6n-4m$, which implies $6n-5m\leq 0$;
\item if $x^5_{22}=1$ and $x^5_{13}=x^5_{33}= 0$, then $x^5_{33}=5m-6n+1$ and $x^5_{23}=6n-4m-2$, which implies $6n-5m=1$.
\end{itemize}
Hence all possible $x^5_{ij}$ values correspond to those in $F_{10}$. So, let $G'$ be a graph with $x^5_{ij}$ $(i,j)$-edges. We have
\vspace{-0.0cm}$$f(G')=f(G)+x_{12}W_1+\lfloor\frac{x^1_{22}}{2}\rfloor W_2+\min\{\lfloor\frac{x^2_{33}}{3}\rfloor,x^2_{13}\}W_3+\min\{x^3_{22},x^3_{33}\}W_4+\min\{x^4_{13},\lfloor\frac{x^4_{33}}{2}\rfloor\}W_5.
\vspace{-0.0cm}$$
If $G$ does not belong to $F_{10}$ then at least one of the five values $x_{12}$, $\lfloor\frac{x^1_{22}}{2}\rfloor$, $\min\{\lfloor\frac{x^2_{33}}{3}\rfloor,x^2_{13}\}$, $\min\{x^3_{22},x^3_{33}\}$ and $\min\{x^4_{13},\lfloor\frac{x^4_{33}}{2}\rfloor\}$ is strictly positive, which implies $f(G')<f(G)$, a contradiction.
\end{proof}

\vspace{0.2cm}\begin{defn}\label{familyF11}
	Family $F_{11}$ is obtained from $F_{10}$ by adding the following possible values:\begin{center}
		\setlength{\extrarowheight}{4pt}
		\begin{tabular}{| c | c | c | c |c | l}\cline{1-5} 
			$x_{12}$ & $x_{13}$ & $x_{22}$& $x_{23}$& $x_{33}$&\\ \cline{1-5}
			1 & $\frac{6n-5m-4}{3}$ & 0 & $\frac{8m-6n+1}{3}$ &0& \multirow{2}{*}{if $n-1\leq m\leq\frac{6n-2}{5}$ and $m\mymod 3=1$}\\ 
			0 & $\frac{6n-5m+2}{3}$ & 0 & $\frac{8m-6n-8}{3}$ &2& \\ \cline{1-5}
			0 & 1 & 0 & $m-3$ &2& if $m=\frac{6n-1}{5}$\\ \cline{1-5}
		\end{tabular}
	\end{center}
\end{defn}

\begin{thm}\label{thm_F11}
	A chemical graph $G$ maximizes the $f$=Albertson topological index over all graphs of the same order and size as $G$ if and only if $G\in F_{11}$.
\end{thm}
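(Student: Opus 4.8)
The plan is to reuse the five $(n,m)$-preserving transforms $A_1,\dots,A_5$ from the proof of Theorem~\ref{thm_F10}, exploiting that the Albertson index has $c_{12}=1$, $c_{13}=2$, $c_{22}=0$, $c_{23}=1$, $c_{33}=0$, so that $f(G)=x_{12}+2x_{13}+x_{23}$. Evaluating the values $W_1,\dots,W_5$ associated with $A_1,\dots,A_5$ for these coefficients gives $W_1=W_5=0$ and $W_2=W_3=W_4=2$; that is, $A_1$ and $A_5$ leave $f$ unchanged while $A_2,A_3,A_4$ strictly increase it. This is exactly why $F_{11}$ is obtained from $F_{10}$ by adjoining extra profiles: those added profiles should turn out to be images of $F_{10}$-profiles under the neutral transforms $A_1$ and $A_5$.

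First I would rule out $G=\Path{n}$. Paths occur only for $m=n-1$, and $f(\Path{n})=2$, whereas the unique $F_{10}$-profile of size $m=n-1$ has strictly larger value for $n\ge 7$; hence a maximizer is not a path, and (as argued in Theorem~\ref{thm_F10}) every connected non-path chemical graph satisfies $x_{12}\le x_{23}$, which is precisely what is needed to apply $A_1$ with amount $x_{12}$. I would then run the identical reduction as in Theorem~\ref{thm_F10}, producing a graph $G^5\in F_{10}$ of the same order and size with
$$f(G^5)=f(G)+2\Big(\big\lfloor\tfrac{x^1_{22}}{2}\big\rfloor+\min\{\lfloor\tfrac{x^2_{33}}{3}\rfloor,x^2_{13}\}+\min\{x^3_{22},x^3_{33}\}\Big).$$
Since the bracket is nonnegative and the $F_{10}$-profile is unique for each $(n,m)$, this simultaneously shows that every $F_{10}$-graph attains the maximum value and that a maximizer $G$ must make the bracket vanish.

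The vanishing of the bracket forces the three strict-weight amounts to be zero, so in the reduction $G$ is altered only by the neutral transforms $A_1$ (amount $x_{12}$) and $A_5$ (amount $k_5$); equivalently $G=G^*-x_{12}A_1-k_5A_5$ for some $G^*\in F_{10}$ and integers $x_{12},k_5\ge 0$. I would finish by enumerating, for each $(n,m)$-range and each residue of $m$ modulo $3$, which pairs $(x_{12},k_5)$ keep all five $x_{ij}$ nonnegative and realizable. The reverse transforms $-A_1=(1,-1,-1,1,0)$ and $-A_5=(0,1,-1,-2,2)$ each consume the single available $(2,2)$- or $(3,3)$-slack of $G^*$, so at most one unit of at most one of them can ever be applied: in the range $n-1\le m\le\frac{6n-2}{5}$ with $m\equiv 1\pmod 3$ the two possibilities $(x_{12},k_5)=(1,0)$ and $(0,1)$ give exactly the two added profiles, at $m=\frac{6n-1}{5}$ the possibility $(0,1)$ gives the single added profile, and in every other case only $G^*\in F_{10}$ survives. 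Conversely, each added profile differs from an $F_{10}$-profile by a neutral transform and is a realizable chemical graph, hence attains the maximum value; so $F_{11}$ is exactly the set of maximizers.

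I expect the enumeration in the last paragraph to be the main obstacle. One must verify case by case that $x_{12}\ge 2$, or $k_5\ge 2$, or applying both $-A_1$ and $-A_5$ simultaneously always drives some $x_{ij}$ negative, and that the boundary profiles (those at $m=\frac{6n-1}{5}$ and the residue-dependent profiles near $m=\frac{6n-2}{5}$) remain realizable chemical graphs of order $n$ and size $m$. Everything else reduces to the bookkeeping already established in Theorem~\ref{thm_F10}, together with the single numerical observation $W_1=W_5=0$, $W_2=W_3=W_4=2$ that pins down which transforms are neutral.
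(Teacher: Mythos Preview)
Your approach is correct and genuinely different from the paper's. The paper does \emph{not} reuse the five transforms from Theorem~\ref{thm_F10}; instead it introduces six new $(n,m)$-preserving vectors, each with associated Albertson-value exactly $2$ (so all six are strictly increasing), and applies them in sequence to reduce an arbitrary non-path $G$ directly to an $F_{11}$-profile. The case analysis on the resulting $x^6_{ij}$ values then matches the six possible profiles in $F_{11}$ immediately, with no neutral transforms and no separate enumeration of shifts.

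Your route---recycling the $F_{10}$ reduction and observing that $W_1=W_5=0$ while $W_2=W_3=W_4=2$---is more conceptual: it explains structurally \emph{why} $F_{11}$ is $F_{10}$ together with its neutral-transform orbit, and it makes the inclusion $F_{10}\subseteq F_{11}$ transparent. The cost is the final enumeration you flag, but that enumeration is short: since every $F_{10}$ profile has $x^*_{22}\le 1$ and both $-A_1$ and $-A_5$ decrement $x_{22}$, one has $a+b\le 1$, and checking which of $(a,b)\in\{(1,0),(0,1)\}$ keeps $x_{13},x_{23}\ge 0$ in each of the five $F_{10}$ cases is a two-line verification (in particular, for $m\equiv 1\pmod 3$ one has $6n-5m\equiv 1\pmod 3$, so $6n-5m\ge 2$ forces $6n-5m\ge 4$ and hence $x^*_{13}\ge 1$). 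The paper's approach trades this conceptual clarity for a self-contained argument that avoids invoking Theorem~\ref{thm_F10}.
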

\vspace{-15pt}\begin{proof}
   The Albertson topological index is defined by  $c_{ij}=|i-j|$ (see Table \ref{tab_inv}). Let $G$ be a chemical graph of order $n$, size $m$ and with $x_{ij}$ $(i,j)$-edges. Assume that it maximizes $f$ over all chemical graphs of order $n$ and size  $m$.  Note that the two possible cases for $m=\frac{6n-1}{5}$ have the same value since $c_{13}-c_{22}-2c_{23}+2c_{33}=0$. Also, the case in $F_{10}$ for $m\leq\frac{6n-2}{5}$ and $m\mymod 3=1$ has the same value as the two new possibilities in $F_{11}$ since $c_{12}-c_{13}-c_{22}+c_{23}=c_{13}-c_{22}-2c_{23}+2c_{33}=0$. Moreover, $G\neq \Path{n}$ since for $n=m-1$, the chemical graph $G'$ with $x'_{12}=1$, $x'_{13}=2$, $x'_{23}=1$ and $x'_{22}=m-4$ would have value $f(G')=f(G)-c_{12}+2c_{13}-2c_{22}+c_{23}=f(G)+4$. Hence, $x_{12}\leq x_{23}$. 	
   
  \vspace{0.1cm}\noindent Consider the five following $(n,m)$-preserving vectors $A_1,A_2,A_3,A_4,A_5,A_6$:
\vspace{-0.3cm}	\begin{itemize}
       \setlength{\itemsep}{-5pt}
			\item $A_1=(-2,3,0,-2,1)$ is associated with $-2c_{12}+3c_{13}-2c_{23}+c_{33}=2$;
	\item $A_2=(-1,2,-1,-1,1)$ is associated with $-c_{12}+2c_{13}-c_{22}-c_{23}+c_{33}=2$;
		\item $A_3=(0,1,-2,0,1)$ is associated with $c_{13}-2c_{22}+c_{33}=2$;
		\item $A_4=(0,0,-1,2,-1)$ is associated with $-c_{22}+2c_{23}-c_{33}=2$;
		\item $A_5=(-1,1,0,1,-1)$ is associated with $-c_{12}+c_{13}+c_{23}-c_{33}=2$;
		\item $A_6=(0,-1,0,4,-3)$ is associated with $-c_{13}+4c_{23}-3c_{33}=2$.
	\end{itemize}
\vspace{-0.4cm}	Let 
\vspace{-0.5cm}	\begin{itemize}
       \setlength{\itemsep}{-4pt}
			\item $(x^1_{12},x^1_{13},x^1_{22},x^1_{23},x^1_{33})$ be the $(A_1,\lfloor\frac{x_{12}}{2}\rfloor)$-transform of $(x_{12},x_{13},x_{22},x_{23},x_{33})$;
            \item $(x^2_{12},x^2_{13},x^2_{22},x^2_{23},x^2_{33})$ be the $(A_2,\min\{x^1_{12},x^1_{22}\})$-transform of $(x^1_{12},x^1_{13},x^1_{22},x^1_{23},x^1_{33})$;
		\item $(x^3_{12},x^3_{13},x^3_{22},x^3_{23},x^3_{33})$ be the $(A_3,\lfloor\frac{x^2_{22}}{2}\rfloor)$-transform of $(x^2_{12},x^2_{13},x^2_{22},x^2_{23},x^2_{33})$;
		\item $(x^4_{12},x^4_{13},x^4_{22},x^4_{23},x^4_{33})$ be the $(A_4,\min\{x^3_{22},x^3_{33}\})$-transform of $(x^3_{12},x^3_{13},x^3_{22},x^3_{23},x^3_{33})$;
		\item $(x^5_{12},x^5_{13},x^5_{22},x^5_{23},x^5_{33})$ be the $(A_5,\min\{x^4_{12},x^4_{33}\})$-transform of $(x^4_{12},x^4_{13},x^4_{22},x^4_{23},x^4_{33})$;
		\item $(x^6_{12},x^6_{13},x^6_{22},x^6_{23},x^6_{33})$ be the $(A_6,\min\{x^5_{13},\lfloor\frac{x^5_{33}}{3}\rfloor\})$-transform of $(x^5_{12},x^5_{13},x^5_{22},x^5_{23},x^5_{33})$.
	\end{itemize}
We then have $x^6_{12}+x^6_{22}\leq 1$, $\min\{x^6_{12},x^6_{33}\}{=}0$,  $\min\{x^6_{22},x^6_{33}\}{=}0$, and $x^6_{13}=0$ or $x^6_{33}\leq 2$. Hence, there are 6 possible cases for which we can derive the $x^6_{ij}$ values  using Equations~\eqref{n} and \eqref{m}:
\vspace{-0.2cm}	
    \begin{itemize}
       \setlength{\itemsep}{-4pt}
		\item if $x^6_{12}=0$, $x^6_{22}=1$ and $x^6_{33}=0$, then $x^6_{13}=\frac{6n-5m-1}{3}$ and $x^6_{23}=\frac{8m-6n-2}{3}$, which implies $6n-5m\geq 1$ and $m\mymod 3=1$;
		\item if $x^6_{12}=1, x^6_{22}=0$ and $x^6_{33}=0$, then  $x^6_{13}=\frac{6n-5m-4}{3}$ and $x^6_{23}=\frac{8m-6n+1}{3}$, which implies $6n-5m\geq 4$ and $m\mymod 3=1$;
		\item if $x^6_{12}=x^6_{22}=x^6_{13}=0$ then $x^6_{23}=6n-4m$ and $x^6_{33}=5m-6n$, which implies $6n-5m\leq 0$;
		\item if $x^6_{12}=x^6_{22}=x^6_{33}=0$ and  $x^6_{13}\geq 1$, then  $x^6_{13}=\frac{6n-5m}{3}$ and $x^6_{23}=\frac{8m-6n}{3}$, which implies $6n-5m\geq 3$ and $m\mymod 3=0$;
		\item if $x^6_{12}=x^6_{22}=0$, $x^6_{33}=1$ and  $x^6_{13}\geq 1$, then $x^6_{13}=\frac{6n-5m+1}{3}$ and $x^6_{23}=\frac{8m-6n-4}{3}$, which implies $6n-5m\geq 2$ and $m\mymod 3=2$;
		\item if $x^6_{12}=x^6_{22}=0$, $x^6_{33}=2$ and  $x^6_{13}\geq 1$, then $x^6_{13}=\frac{6n-5m+2}{3}$ and $x^6_{23}=\frac{8m-6n-8}{3}$, which implies $6n-5m\geq 1$ and $m\mymod 3=1$.
			\end{itemize}
Hence all possible $x^6_{ij}$ values correspond to those in $F_{11}$. So, let $G'$ be a graph with $x^6_{ij}$ $(i,j)$-edges. We have
\vspace{-0.3cm}\begin{center}$f(G'){=}f(G){+}2\left(\lfloor\frac{x_{12}}{2}\rfloor
    {+}\min\{x^1_{12},x^1_{22}\}
	{+}\lfloor\frac{x^2_{22}}{2}\rfloor
	{+}\min\{x^3_{22},x^3_{33}\}
	{+}\min\{x^4_{12},x^4_{33}\}
	{+}\min\{x^5_{13},\lfloor\frac{x^5_{33}}{3}\rfloor\}\right)$\end{center}
\vspace{-0.1cm}	If $G$ does not belong to $F_{11}$ then at least one of the six values $\lfloor\frac{x_{12}}{2}\rfloor$, $\min\{x^1_{12},x^1_{22}\}$, $\lfloor\frac{x^2_{22}}{2}\rfloor$, 
$\min\{x^3_{22},x^3_{33}\}$, 
$\min\{x^4_{12},x^4_{33}\}$,
$\min\{x^5_{13},\lfloor\frac{x^5_{33}}{3}\rfloor\}$
is strictly positive, which implies $f(G')>f(G)$, a contradiction.\qedhere
\end{proof}

\begin{defn}\label{familyF12}
		$F_{12}$ is the set of chemical graphs with the following numbers $x_{ij}$ of $(i,j)$-edges:
\begin{center}
	\setlength{\extrarowheight}{0pt}
		\begin{tabular}{c| c | c | c | c |c | l}\cline{2-6} 
			&$x_{12}$ & $x_{13}$ & $x_{22}$& $x_{23}$& $x_{33}$&\\ \cline{2-6}
			&2 & 0 & $m-2$ & 0 &0& if $m=n-1$\\ \cline{2-6}
			&0 & 0 & $m$ & 0 &0& if $m=n$\\ \cline{2-6}
			$(n\geq 8)$&2 & 0 & $m-9$ & 2 &5& \multirow{5}{*}{if $m=n+1$}\\ 
			&1 & 1 & $m-8$ & 1 &5& \\
			&1 & 0 & $m-7$ & 3 &3& \\
			&0 & 1 & $m-6$ & 2 &3& \\
			&0 & 0 & $m-5$ & 4 &1& \\\cline{2-6}			
		&	0 & 0 & $3n-2m-1$ &2 &$3m-3n-1$& \multirow{2}{*}{if $n+1<m\leq \frac{3n-3}{2}$}\\
			&1 & 0 & $3n-2m-3$ &1 &$3m-3n+1$&\\			 \cline{2-6}
		\end{tabular}
	\end{center}
\end{defn}

\begin{thm}\label{thmF12}
	A chemical graph $G$ minimizes the $f$=Albertson topological index over all graphs of the same order and size as $G$ if and only if $G\in F_{12}$.
\end{thm}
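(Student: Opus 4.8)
Minimizing the Albertson index over the chemical graphs of a fixed order $n$ and size $m$ is the same as maximizing its complement $\overline{\text{Albertson}}$, whose coefficients are $c_{12}=-1$, $c_{13}=-2$, $c_{22}=c_{33}=0$ and $c_{23}=-1$. A direct computation gives $V_6=2>0$, $V_5+V_7=2V_6$, and, crucially, $V_3=0$. These are exactly the hypotheses of Theorem~\ref{thm_F7}, \emph{except} that $V_3$ vanishes instead of being strictly positive. The plan is therefore to reuse the architecture of that proof, the one new point being that Lemma~\ref{lem:V3} now only says that a $(1,3)$-edge is value-neutral, not strictly harmful. This is precisely why $F_{12}$ enlarges $F_7$: the extra rows, all sitting at $m=n+1$, are the configurations carrying one $(1,3)$-edge, and they are value-equivalent to $(1,3)$-free ones.

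The tools are four $(n,m)$-preserving vectors. Three lower the Albertson value by $2$: $(0,0,1,-2,1)$ and $(-1,0,3,-1,-1)$ (already used for $F_7$), together with the new vector $(0,-1,2,0,-1)$, which turns a $(1,3)$- and a $(3,3)$-edge into two $(2,2)$-edges and is the sum of $(-1,0,3,-1,-1)$ and the value-neutral vector $(1,-1,-1,1,0)$. The latter, trading a $(1,3)$- and a $(2,2)$-edge for a $(1,2)$- and a $(2,3)$-edge, together with $(0,0,1,-2,1)$, certifies that all configurations listed in $F_{12}$ for a common $(n,m)$ share the same value; recording the identities $c_{13}=c_{12}+c_{23}$ and $c_{12}+2c_{33}=2c_{22}+c_{23}$ (which link the five $m=n+1$ rows, and the two dense rows) is the first step. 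I would also note, from condition~\eqref{cond6}, that every minimizer with $n_3\ge1$ satisfies $x_{12}\le x_{23}$.

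The reduction then splits by $m$. For $m=n-1$ and $m=n$, Lemma~\ref{lem:V4} forces $G=\Path{n}$ or $G=\C{n}$, giving the first two rows exactly as in Theorem~\ref{thm_F7}. In the dense range $n+1<m\le\frac{3n-3}{2}$ one shows $x_{13}=0$: while $x_{22}\ge1$ the value-neutral $(1,-1,-1,1,0)$ lowers $x_{13}$, and any surviving $(1,3)$-edge with $x_{33}\ge1$ is cleared by the strictly improving $(0,-1,2,0,-1)$, whose target is realizable because $n_3$ is large here (so \eqref{cond1} is slack); the residual tight cases fall to the $(a,b)$-domination argument of Theorem~\ref{thm_F7}, which lands $G$ on one of the two dense rows. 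The delicate case is $m=n+1$, where \eqref{n}--\eqref{m} give $n_1=n_3-2$. Here I would enumerate: $x_{12}\le x_{23}$ together with \eqref{cond4} (which forbids $x_{23}=0$ when $n_2,n_3\ge1$) caps $x_{13}$ at $1$, and \eqref{cond1} caps $x_{33}$ through $n_3$, so that the only realizable configurations attaining the minimum value $4$ are the five listed rows, the two with $x_{33}=5$ requiring $n_3=4$ and $x_{22}=m-9\ge0$ (whence the annotation $n\ge8$).

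The main obstacle is the constant need to verify realizability before applying an improving transform. The naive move ``$(1,3)$- plus $(3,3)$-edge $\to$ two $(2,2)$-edges'' via $(0,-1,2,0,-1)$ would, if always available, destroy the genuine minimizers with $x_{13}=1$, $x_{33}=5$ at $m=n+1$; it is blocked there only because $n_3\in\{3,4\}$ makes \eqref{cond1} tight (one cannot host $x_{33}=4$ on $n_3=3$ vertices). Likewise the bound $x_{13}\le1$, rather than $x_{13}=0$, rests entirely on \eqref{cond4} when $x_{23}$ is forced small, and the constraint $x_{12}\le x_{23}$ rests on \eqref{cond6}. Thus every transformation must be guarded by the Hansen conditions \eqref{cond1}--\eqref{cond6}, and the real work of the proof is the finite but careful case check at $m=n+1$, including the boundary order $n=7$, where the first $m=n+1$ row is infeasible and disappears.
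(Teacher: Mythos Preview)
Your transform-based adaptation of Theorem~\ref{thm_F7} is quite different from the paper's argument, which is far more elementary. Since the Albertson value is $f(G)=x_{12}+2x_{13}+x_{23}$ and $x_{12}+x_{23}\equiv 0\pmod 2$ (by Equation~\eqref{eq_n2}), $f(G)$ is always even. The paper simply computes the least attainable even value in each regime --- $2$ for $m=n-1$, $0$ for $m=n$, $2$ for $m>n+1$, and $4$ for $m=n+1$ --- and then enumerates the finitely many triples $(x_{12},x_{13},x_{23})$ achieving it, recovering $x_{22},x_{33}$ from Equations~\eqref{n}--\eqref{m} and discarding unrealizable ones via conditions~\eqref{cond1}--\eqref{cond4}. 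No $(n,m)$-preserving transforms are used at all, and the fact that all minimizers (not just some value-equivalent representative) lie in $F_{12}$ falls out of the enumeration directly.

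Your sketch, by contrast, has two genuine gaps. First, you appeal to Lemma~\ref{lem:V4} for the cases $m\in\{n-1,n\}$, but for $\overline{\text{Albertson}}$ one has $V_4=2c_{22}-c_{12}-c_{23}+2\min_i(c_{2i}-c_{3i})=0+1+1-2=0$, so that lemma does not apply (and even if it did, its conclusion is ``no $(1,2)$-edge'', not ``$G=\C{n}$''; you would still need $V_3>0$ to kill $(1,3)$-edges, and $V_3=0$ as you note). Second, in the dense range your reduction chains a value-neutral move $(1,-1,-1,1,0)$ with the improving move $(0,-1,2,0,-1)$, but the neutral step only shows the \emph{value} agrees with some $F_{12}$ configuration, not that the original graph lies in $F_{12}$; and the improving step can land on an unrealizable target. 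For instance, at $(n,m)=(8,10)$ the realizable configuration $(x_{12},x_{13},x_{22},x_{23},x_{33})=(0,2,0,0,8)$ has $x_{22}=0$, so the neutral move is unavailable, while $(0,-1,2,0,-1)$ sends it to $(0,1,2,0,7)$, which violates condition~\eqref{cond4} since $x_{23}=0$ with $n_2,n_3\ge 1$. These issues are repairable, but the paper's direct enumeration is both shorter and sidesteps the realizability bookkeeping entirely.
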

\begin{proof}
As in the previous theorem, we have $c_{ij}=|i-j|$. Let $G$ be a chemical graph of order $n$, size $m$ and with $x_{ij}$ $(i,j)$-edges. Assume that it minimizes $f$ over all chemical graphs of order $n$ and size  $m$.  
If $m=n-1$, then $G=\Path{n}$ since $f(\Path{n})=2$ while $f(G)\geq 6$ if $n_3>0$. Also, if $m=n$, then $G=\C{n}$ since $f(\C{n})=0$ while $f(G)>0$ if $n_1+n_3>0$ and $m<\frac{3n}{2}$. Hence, in these cases, we have $G\in F_{12}$.

Assume now $m\geq n+1$. We thus have $x_{12}\leq x_{23}$ and $f(G)$ is an even number at least equal to 2 (since $m<\frac{3n}{2}).$ To reach value 2, there are only three possibilities:
\vspace{-0.3cm}\begin{itemize}
       \setlength{\itemsep}{-4pt}
	\item if $x_{13}=1$ and $x_{12}=x_{23}=0$, then $x_{22}=0$ and Equations~\eqref{n} and \eqref{m} give $x_{22}=3n-2m-2$ and $x_{33}=3m-3n+1$ which implies $m=\frac{3n-2}{2}$, a contradiction.
	\item if $x_{12}=x_{23}=1$ and $x_{13}=0$, then Equations~\eqref{n} and \eqref{m} give $x_{22}=3n-2m-3$ and $x_{33}=3m-3n+1$, which implies $m>n+1$ (else $x_{33}=4$ and $n_3=3$ imply $x_{33}=4>3=\frac{n_3((n_3-1)}{2})$) and $G\in F_{12}$.
	\item if $x_{12}=0$, $x_{23}=2$ and $x_{13}=0$, then Equations~\eqref{n} and \eqref{m} give $x_{22}=3n-2m-1$ and $x_{33}=3m-3n-1$, which implies $m>n+1$ (else $x_{33}=1$ and $n_3=1$ imply $x_{33}=1>0=\frac{n_3((n_3-1)}{2})$) and $G\in F_{12}$.
\end{itemize}
Hence, if $m>n+1$ then $G\in F_{12}$. The remaining case is $m=n+1$ for which $f(G)\geq 4$. There are only six possibilities to reach the minimum value 4:
\vspace{-0.3cm}\begin{itemize}
       \setlength{\itemsep}{-4pt}
	\item if $x_{12}=x_{23}=2$ and $x_{13}=0$, then $x_{22}=m-9$ and $x_{33}=5$.
	\item if $x_{12}=x_{23}=1$ and $x_{13}=1$, then $x_{22}=m-8$ and $x_{33}=5$.
	\item if $x_{12}=1$, $x_{23}=3$ and $x_{13}=0$, then $x_{22}=m-7$ and $x_{33}=3$.
	\item if $x_{12}=0$, $x_{23}=2$ and $x_{13}=1$, then $x_{22}=m-6$ and $x_{33}=3$.
	\item if $x_{12}=0$, $x_{23}=4$ and $x_{13}=0$, then $x_{22}=m-5$ and $x_{33}=1$.
	\item if $x_{12}=x_{23}=0$ and $x_{13}=2$, then $x_{22}=0$ and $x_{33}=m-2$, which implies $n=6$.
\end{itemize}
\vspace{-0.2cm}Since $G$ is of order $n\geq 7$, we have $G\in F_{12}$.
\end{proof}

\section{Conclusion}

Many topological indices have been proposed to study the chemical properties of molecules, and many papers focus on extremal graphs for these indices, each paper dealing with a particular index. We have shown that many of these topological indices have the same extremal properties in the sense that the chemical graphs that maximize or minimize the values of these indices are often the same. Thus, for example, for 29 of these indices, one might expect 58 classes of extremal chemical graphs, while 5 families are sufficient to describe them all. Also, for another example, chemical graphs of even order $n$ for which $x_{13}=\frac{3n-2m}{2}$, $x_{33}=\frac{4m-3n}{2}$ and $x_{12}=x_{22}=x_{23}=0$  are extremal for 29 topological indices (since these graphs belong to $F_1 \cap F_3$). 

Most of the characterizations we have given for extremal graphs are based on a set of 8 values $V_1,\ldots,V_8$. If new topological indices are proposed, it is therefore easy to check whether they have the same extremal properties of the indices studied in this paper. Note that some degree-based topological indices that we have not analyzed in this paper do not have any of the stated properties that allow us to characterize their extremal chemical graphs. For example, the reduced reciprocal Randi\'c index (rrRandi\'c) mentioned in \cite{GFE14} and defined by $c_{ij}=\sqrt{(i-1)(j-1)}$ is such that $V_1, V_2, V_3$ and $V_4$ are strictly negative. An analysis similar to those performed in Section \ref{secFamilies} easily shows that the set of extremal chemical graphs of order $n\geq 10$ for the aZagreb index is strictly contained in that for rrRandi\'c.

As Ivan Gutman pointed out \cite{G13},  ``today we have far too many
topological indices, and there seems to lack a firm
criterion to stop or slow down their proliferation''.  We believe we have provided a tool to quickly test whether a new topological index has the same extremal properties as many existing indices.

\begin{spacing}{0.1}

\end{spacing}


\begin{thebibliography}{10}

\bibitem{ali2023maximum}
{\sc Ali, A., Albalahi, A.~M., Alanazi, A.~M., Bhatti, A.~A., and Hamza, A.~E.}
\newblock On the maximum sigma index of k-cyclic graphs.
\newblock {\em Discrete Applied Mathematics 325\/} (2023), 58--62.

\bibitem{ali2019extremal}
{\sc Ali, A., Dimitrov, D., Du, Z., and Ishfaq, F.}
\newblock On the extremal graphs for general sum-connectivity index
  ($\chi$$\alpha$) with given cyclomatic number when $\alpha$> 1.
\newblock {\em Discrete Applied Mathematics 257\/} (2019), 19--30.

\bibitem{ali2020sddi}
{\sc Ali, A., Elumalai, S., and Mansour, T.}
\newblock On the symmetric division deg index of molecular graphs.
\newblock {\em MATCH Commun. Math. Comput. Chem. 83}, 1 (2020), 205--220.

\bibitem{ali2021augmented}
{\sc Ali, A., Furtula, B., Gutman, I., and Vuki{\v{c}}evi{\'c}, D.}
\newblock Augmented zagreb index: extremal results and bounds.
\newblock {\em MATCH Communications in Mathematical and in Computer
  Chemistry\/} (2021).

\bibitem{AFRG22}
{\sc Ali, A., Furtula, B., Red{\v{z}}epovi{\'c}, I., and Gutman, I.}
\newblock Atom-bond sum-connectivity index.
\newblock {\em Journal of Mathematical Chemistry 60}, 10 (2022), 2081--2093.

\bibitem{hog}
{\sc Brinkmann, G., Coolsaet, K., Goedgebeur, J., and M{\'e}lot, H.}
\newblock House of graphs: a database of interesting graphs.
\newblock {\em Discrete Applied Mathematics 161}, 1-2 (2013), 311--314.

\bibitem{carballosa2022ag}
{\sc Carballosa, W., Granados, A., M\'{e}ndez~Berm\'{u}dez, J.~A., Pestana, D.,
  and Portilla, A.}
\newblock Computational properties of the arithmetic-geometric index.
\newblock {\em J. Math. Chem. 60}, 9 (2022), 1854--1871.

\bibitem{che2016forgotten}
{\sc Che, Z., and Chen, Z.}
\newblock Lower and upper bounds of the forgotten topological index.
\newblock {\em MATCH Commun. Math. Comput. Chem. 76}, 3 (2016), 635--648.

\bibitem{chu2020extremal}
{\sc Chu, Z.-Q., Jamil, M.~K., and Javed, A.}
\newblock Extremal hyper zagreb index for tricyclic graphs.
\newblock {\em International Journal of Applied Mathematics 32}, 6 (2020).

\bibitem{cruz2021sombor}
{\sc Cruz, R., Gutman, I., and Rada, J.}
\newblock Sombor index of chemical graphs.
\newblock {\em Applied Mathematics and Computation 399\/} (2021), 126018.

\bibitem{cruz2021extremal}
{\sc Cruz, R., and Rada, J.}
\newblock Extremal values of the sombor index in unicyclic and bicyclic graphs.
\newblock {\em Journal of Mathematical Chemistry 59\/} (2021), 1098--1116.

\bibitem{cui2021ag}
{\sc Cui, S.-Y., Wang, W., Tian, G.-X., and Wu, B.}
\newblock On the arithmetic-geometric index of graphs.
\newblock {\em MATCH Commun. Math. Comput. Chem. 85}, 1 (2021), 87--107.

\bibitem{das2011abc}
{\sc Das, K.~C., Gutman, I., and Furtula, B.}
\newblock On atom-bond connectivity index.
\newblock {\em Chemical Physics Letters 511}, 4 (2011), 452--454.

\bibitem{das2016}
{\sc Das, K.~C., Xu, K., and Wang, J.}
\newblock On inverse degree and topological indices of graphs.
\newblock {\em Filomat 30}, 8 (2016), 2111--2120.

\bibitem{DENG2013}
{\sc Deng, H., Balachandran, S., Ayyaswamy, S., and Venkatakrishnan, Y.}
\newblock On the harmonic index and the chromatic number of a graph.
\newblock {\em Discrete Applied Mathematics 161}, 16 (2013), 2740--2744.

\bibitem{deng2018ga}
{\sc Deng, H., Elumalai, S., and Balachandran, S.}
\newblock Maximum and second maximum of geometric-arithmetic index of tricyclic
  graphs.
\newblock {\em MATCH Commun. Math. Comput. Chem. 79}, 2 (2018), 467--475.

\bibitem{deng2021molecular}
{\sc Deng, H., Tang, Z., and Wu, R.}
\newblock Molecular trees with extremal values of sombor indices.
\newblock {\em International Journal of Quantum Chemistry 121}, 11 (2021),
  e26622.

\bibitem{phoeg}
{\sc Devillez, G., Hauweele, P., and M{\'e}lot, H.}
\newblock {PHOEG Helps to Obtain Extremal Graphs}.
\newblock In {\em Operations Research Proceedings 2018 (GOR (Gesellschaft fuer
  Operations Research e.V.))\/} (sept. 12-14 2019), B.~Fortz and M.~Labb\'e,
  Eds., Springer, Cham, p.~251 (Paper 32).

\bibitem{elumalai2018}
{\sc Elumalai, S., Hosamani, S.~M., Mansour, T., and Rostami, M.~A.}
\newblock More on inverse degree and topological indices of graphs.
\newblock {\em Filomat 32}, 1 (2018), 165--178.

\bibitem{falahati2017}
{\sc Falahati-Nezhad, F., Azari, M., and Do{\v{s}}li{\'c}, T.}
\newblock Sharp bounds on the inverse sum indeg index.
\newblock {\em Discrete applied mathematics 217\/} (2017), 185--195.

\bibitem{PF}
{\sc Fowler, P.}
\newblock Private communication, 2024.

\bibitem{furtula2010augmented}
{\sc Furtula, B., Graovac, A., and Vuki{\v{c}}evi{\'c}, D.}
\newblock Augmented zagreb index.
\newblock {\em Journal of mathematical chemistry 48\/} (2010), 370--380.

\bibitem{ghorbani2021sddi}
{\sc Ghorbani, M., Zangi, S., and Amraei, N.}
\newblock New results on symmetric division deg index.
\newblock {\em J. Appl. Math. Comput. 65}, 1-2 (2021), 161--176.

\bibitem{G13}
{\sc Gutman, I.}
\newblock Degree-based topological indices.
\newblock {\em Croatica Chemica Acta 86\/} (2013), 351--361.

\bibitem{G21}
{\sc Gutman, I.}
\newblock Geometric approach to degree-based topological indices: Sombor
  indices.
\newblock {\em MATCH Commun. Math. Comput. Chem. 86\/} (2021), 11--16.

\bibitem{gutman2004first}
{\sc Gutman, I., and Das, K.~C.}
\newblock The first zagreb index 30 years after.
\newblock {\em MATCH Commun. Math. Comput. Chem 50}, 1 (2004), 83--92.

\bibitem{GFE14}
{\sc Gutman, I., Furtula, B., and Elphick, C.}
\newblock Three new/old vertex-degree-based topological indices.
\newblock {\em MATCH Commun. Math. Comput. Chem. 72\/} (2014), 617--632.

\bibitem{Hansen2017}
{\sc Hansen, P., Caporossi, G., Hertz, A., Aouchiche, M., Vukicevi{\'c}, D.,
  and Sellal, C.}
\newblock Edge realizability of connected simple graphs.
\newblock {\em MATCH Commun. Math. Comput. Chem. 78\/} (2017), 689--712.

\bibitem{hansen2005albertson}
{\sc Hansen, P., and M\'{e}lot, H.}
\newblock Variable neighborhood search for extremal graphs. {IX}. {B}ounding
  the irregularity of a graph.
\newblock In {\em Graphs and discovery}, vol.~69 of {\em DIMACS Ser. Discrete
  Math. Theoret. Comput. Sci.} Amer. Math. Soc., Providence, RI, 2005,
  pp.~253--264.

\bibitem{hansen2009variable}
{\sc Hansen, P., and Vukicevi{\'c}, D.}
\newblock Variable neighborhood search for extremal graphs. 23. on the
  randi{\'c} index and the chromatic number.
\newblock {\em Discrete Mathematics 309}, 13 (2009), 4228--4234.

\bibitem{henning2007albertson}
{\sc Henning, M.~A., and Rautenbach, D.}
\newblock On the irregularity of bipartite graphs.
\newblock {\em Discrete Math. 307}, 11-12 (2007), 1467--1472.

\bibitem{li2022extremal}
{\sc Li, S., Wang, Z., and Zhang, M.}
\newblock On the extremal sombor index of trees with a given diameter.
\newblock {\em Applied Mathematics and Computation 416\/} (2022), 126731.

\bibitem{li2008survey}
{\sc Li, X., and Shi, Y.}
\newblock A survey on the randic index.
\newblock {\em MATCH Commun. Math. Comput. Chem 59}, 1 (2008), 127--156.

\bibitem{liu2021reduced}
{\sc Liu, H., You, L., Tang, Z., and Liu, J.-B.}
\newblock On the reduced sombor index and its applications.
\newblock {\em MATCH Commun. Math. Comput. Chem 86\/} (2021), 729--753.

\bibitem{liu2020some}
{\sc Liu, J., Matejic, M., Milovanovic, E., and Milovanovic, I.}
\newblock Some new inequalities for the forgotten topological index and coindex
  of graphs.
\newblock {\em MATCH Commun. Math. Comput. Chem 84}, 3 (2020), 719--738.

\bibitem{geng}
{\sc McKay, B.~D.}
\newblock Nauty user's guide (version 2.4).
\newblock {\em Computer Science Dept., Australian National University\/}
  (2007), 225--239.

\bibitem{nikolic2003zagreb}
{\sc Nikoli{\'c}, S., Kova{\v{c}}evi{\'c}, G., Mili{\v{c}}evi{\'c}, A., and
  Trinajsti{\'c}, N.}
\newblock The zagreb indices 30 years after.
\newblock {\em Croatica chemica acta 76}, 2 (2003), 113--124.

\bibitem{Randic1975}
{\sc Randi\'c, M.}
\newblock Characterization of molecular branching.
\newblock {\em Journal of the American Chemical Society 97}, 23 (1975),
  6609--6615.

\bibitem{raza2020bounds}
{\sc Raza, Z., and Ali, A.}
\newblock Bounds on the zagreb indices for molecular (n, m)-graphs.
\newblock {\em International Journal of Quantum Chemistry 120}, 18 (2020),
  e26333.

\bibitem{sedlar2015}
{\sc Sedlar, J., Stevanovi{\'c}, D., and Vasilyev, A.}
\newblock On the inverse sum indeg index.
\newblock {\em Discrete Applied Mathematics 184\/} (2015), 202--212.

\bibitem{swartz2022survey}
{\sc Swartz, E., and Vetr{\'\i}k, T.}
\newblock Survey on the general randi{\'c} index: extremal results and bounds.
\newblock {\em Rocky Mountain Journal of Mathematics 52}, 4 (2022), 1177--1203.

\bibitem{VPVFS21}
{\sc Vujo{\v{s}}evi{\'c}, S., Popivoda, G., Vuki{\'c}evi{\'c}, {\v{Z}}.~K.,
  Furtula, B., and {\v{S}}krekovski, R.}
\newblock Arithmetic--geometric index and its relations with
  geometric--arithmetic index.
\newblock {\em Applied Mathematics and Computation 391\/} (2021), 125706.

\bibitem{yan2010ga}
{\sc Yuan, Y., Zhou, B., and Trinajsti\'{c}, N.}
\newblock On geometric-arithmetic index.
\newblock {\em J. Math. Chem. 47}, 2 (2010), 833--841.

\bibitem{zhang2016abc}
{\sc Zhang, X.-M., Yang, Y., Wang, H., and Zhang, X.-D.}
\newblock Maximum atom-bond connectivity index with given graph parameters.
\newblock {\em Discrete Appl. Math. 215\/} (2016), 208--217.

\bibitem{zhong2012harmonic}
{\sc Zhong, L.}
\newblock The harmonic index for graphs.
\newblock {\em Applied mathematics letters 25}, 3 (2012), 561--566.

\end{thebibliography}
\end{document}